\documentclass[a4paper, 11pt]{article}
\usepackage{amsmath}
\usepackage{amssymb}
\usepackage{amsthm}
\usepackage[top=30truemm, left=13truemm, right=13truemm]{geometry}
\usepackage{amscd}
\usepackage{graphicx}
\usepackage[all]{xy}

\newtheorem{dfn}{Definition}[section]
\newtheorem{thm}[dfn]{Theorem}
\newtheorem{lem}[dfn]{Lemma}
\newtheorem{rem}[dfn]{Remark}
\newtheorem{cor}[dfn]{Corollary}

\newtheorem{prop}[dfn]{Proposition}

\newcommand\C{\mathbb{C}}
\newcommand\K{\mathbb{K}}
\newcommand\Z{\mathbb{Z}}

\newcommand\M{\mathbb{M}}

\newcommand\cO{\mathcal{O}}

\newcommand\Aut{\operatorname{Aut}}
\newcommand\End{\operatorname{End}}
\newcommand\id{\mathrm{id}}
\newcommand\Ad{\mathrm{Ad}}

\newcommand{\Tor}{\operatorname{Tor}}

\newcommand{\Diag}{\operatorname{Diag}}

\title{The group structure of the homotopy set whose target is the automorphism group of the Cuntz algebra}
\author{Masaki Izumi 
\thanks{Supported in part by JSPS KAKENHI Grant Number JP15H03623}\\
Graduate School of Science \\
Kyoto University \\
Sakyo-ku, Kyoto 606-8502, Japan 
\and Taro Sogabe \\Graduate School of Science \\
Kyoto University \\
Sakyo-ku, Kyoto 606-8502, Japan }
\begin{document}
\maketitle
\abstract
We determine the group structure of the homotopy set whose target is the automorphism group of the Cuntz algebra $\mathcal{O}_{n+1}$ 
for finite $n$ in terms of K-theory. 
We show that there is an example of a space for which the homotopy set is a non-commutative group, and hence 
the classifying space of the automorphism group of the Cuntz algebra for finite $n$ is not an H-space. 
We also make an improvement of Dadarlat's classification of continuous fields of the Cuntz algebras in terms of vector bundles.  

\section{Introduction}
Dadarlat \cite{D0} computed the homotopy set $[X,\Aut A]$ for a Kirchberg algebra $A$ under a mild assumption of a space $X$. 
He constructed a bijection between $[X,\Aut A]$ and a relevant KK-group, and showed that it is a group homomorphism 
when $X$ is an H'-space (co-H-space). 
However, the group structure of $[X,\Aut A]$ for more general $X$ is still unknown. 

The Cuntz algebra $\mathcal{O}_{n+1}$ is a typical example of a Kirchberg algebra, and it plays an important role in operator algebraic 
realization of the mod $n$ K-theory \cite{RS}. 
Dadarlat's computation shows that $[X,\Aut \cO_{n+1}]$ as a set is identified with the mod $n$ K-group $K^1(X;\Z_n)$. 
One of the main purposes of this paper is to determine the group structure of $[X,\Aut \cO_{n+1}]$, 
and we show that it is indeed different from the ordinary group structure of $K^1(X;\Z_n)$ in general. 
In particular, we verify that the group $[X,\Aut \cO_{n+1}]$ is non-commutative when $X$ is the product of 
the Moore space $M_n$ and its reduced suspension $\Sigma M_n$.  
Our computation uses the Cuntz-Toeplitz algebra $E_{n+1}$ in an essential way, for which  
the homotopy groups of the automorphism group are computed in \cite{ST}. 

The unitary group $U(n+1)$ acts on $\cO_{n+1}$ through the unitary transformations of the linear span of 
the canonical generators, and it induces a map from $[X,BU(n+1)]$ to $[X,B\Aut \cO_{n+1}]$. 
When $X$ is a finite CW-complex with dimension $d$, Dadarlat \cite[Theorem 1.6]{D1} showed that the map is a bijection provided 
that $n\geq \lceil(d-3)/2\rceil$ and $H^*(X)$ has no $n$-torsion. 
Another purpose of this paper is to remove the first condition by a localization trick. 

We use the following notation throughout the paper.  
For a unital C*-algebra $A$, we denote by $U(A)$ the unitary group of $A$, and by $U(A)_0$ the path component of $1_A$ in $U(A)$.
For a non-unital C*-algebra $B$, we denote its unitization by $B^{\sim}$.
We denote by $\mathbb{B}(H)$ the algebra of bounded operators on a Hilbert space $H$, 
by $\mathbb{K}$ the algebra of compact operators on a separable Hilbert space, 
and by $\M_n$ the algebra of $n$ by $n$ matrices.

Our standard references for K-theory are \cite{Bl, K}. 
For a projection $p\in A$ (resp. a unitary $u\in U(A)$), we denote 
denote by $[p]_0$  (resp. $[u]_1$) its class in the K-group $K_0(A)$ (resp. $K_1(A)$). 
For a compact Hausdorff space $X$, we identify the topological K-groups $K^i(X)$ with $K_i(C(X))$ where $C(X)$ is 
the C*-algebra of the continuous functions on $X$. 
When moreover $X$ is path connected, we choose a base point $x_0$, and set $\tilde{K}^i(X)$ to be the kernel of the evaluation map 
$({\rm ev}_{x_0})_* \colon K^i(X)\to K^i(\{x_0\})=\mathbb{Z}$, which is identified with $K_i(C_0(X, x_0))$ 
where $C_0(X, x_0)$ is the C*-algebra of the continuous functions on $X$ vanishing at $x_0$.
We denote by $\Sigma X$ the reduced suspension of $X$. 
For two topological spaces $X$ and $Y$, we denote by $\operatorname{Map}(X,Y)$ the set of continuous map from $X$ to $Y$, 
and by $[X,Y]$, the quotient of $\operatorname{Map}(X,Y)$ by homotopy equivalence.

\textbf{Acknowledgement. } 
The authors would like to thank Marius Dadarlat and Ulrich Pennig for stimulating discussions. 
Masaki Izumi would like to thank Isaac Newton Institute for Mathematical Sciences for its hospitality.

%%%%%%%%%%%%%%%%%%%%%%%%%%%%%%%%%%%%%%%%%%%%%%%%%%%%%%%%%%%%%%%%%%%%%%%%%%%%%%%%%%%%%%%%%%%%%%%%%%%%%%%%%%%%%%%%%%%%%%%%%%%%%%%%%%%%%%%%%%%%%%%%
%%%%%%%%%%%%%%%%%%%%%%%%%%%%%%%%%%%%%%%%%%%%%%%%%%%%%%%%%%%%%%%%%%%%%%%%%%%%%%%%%%%%%%%%%%%%%%%%%%%%%%%%%%%%%%%%%%%%%%%%%%%%%%%%%%%%%%%%%%%%%%%%
%%%%%%%%%%%%%%%%%%%%%%%%%%%%%%%%%%%%%%%%%%%%%%%%%%%%%%%%%%%%%%%%%%%%%%%%%%%%%%%%%%%%%%%%%%%%%%%%%%%%%%%%%%%%%%%%%%%%%%%%%%%%%%%%%%%%%%%%%%%%%%%%
\section{Mod $n$ K-theory}
In this section, we summarize the basics of mod n K-theory from the view point of operator algebras. 

Recall that the Cuntz algebra $\cO_{n+1}$ is the universal C*-algebra generated by $n+1$ isometries $\{S_i\}_{i=0}^n$ with mutually orthogonal ranges  
whose summation is 1. 
Its K-groups are  
$$K_0(\mathcal{O}_{n+1})=\mathbb{Z}_n, \; K_1(\mathcal{O}_{n+1})=0,$$
(see \cite[Theorem 3.7, 3.8, Corollary 3.11]{C}). 
The Cuntz Toeplitz algebra $E_{n+1}$ is the universal C*-algebra generated by $n+1$ isometries $\{T_i\}_{i=0}^{n}$ 
with mutually orthogonal ranges, and it is KK-equivalent to the complex numbers $\C$. 
The closed two-sided ideal generated by the minimal projection $e\colon=1-\sum_{i=0}^{n}T_iT_i^*$ is isomorphic to $\mathbb{K}$, which is known to be the only closed non-trivial two-sided ideal. 
Then the quotient algebra $E_{n+1}/\K$ is isomorphic to $\cO_{n+1}$ with identification $S_i=\pi(T_i)$, where  $\pi$ is the quotient map. 
%We often use realization of $\{T_i\}_{i=0}^n$ by the left creation operators acting on the full Fock space 
%$\mathcal{F}(\mathbb{C}^{n+1})$ (see \cite[Section 1]{Pim}).
%Then one has $\mathbb{K}\subset C^*(\{T_i\}_{i=0}^{n})=E_{n+1}\subset\mathbb{B}(\mathcal{F}(\mathbb{C}^{n+1}))$ 
%and $\mathbb{K}^{\sim}=\mathbb{K}+\mathbb{C}1_{E_{n+1}}\subset\mathbb{B}(\mathcal{F}(\mathbb{C}^{n+1}))$. 

For a natural number $n$, we denote by $M_n$ the Moore space, the mapping cone of the map $n : S^1\ni z\mapsto z^n\in S^1$ :
$$M_n \colon=([0,1]\times S^1) \sqcup S^1/\sim,$$ where $(0, z)\sim (0,1)$ and $(1, z)\sim z^n$ for every $z\in S^1$. 
For cohomology and K-groups, we have 
$$H^0(M_n)=\mathbb{Z},\;H^1(M_n)=0, \;H^2(M_n)=\mathbb{Z}_n,\;H^k(M_n)=0 \textrm{ for } \;k\geq 2,$$ 
$$\tilde{K}^0(M_n)=\mathbb{Z}_n, \; K^1(M_n)=0,$$
(see \cite[Theorem 9.10]{H} for example).
Since $C_0(M_n, pt)$ and $\mathcal{O}_{n+1}$ have the same K-theory and they are in the bootstrap class, 
they are KK-equivalent (see \cite[Section 22.3]{Bl}). 

The mod $n$ K-group of the pointed space $(X, x_0)$ is originally defined by 
$$\tilde{K}^i(X ; \mathbb{Z}_n)\colon=\tilde{K}^i(X\wedge M_n), \; i=0, 1.$$
We refer to \cite{A1, A2} for the mod $n$ K-theory, and refer to \cite[Section 8]{RS} for an operator algebraic aspect of it. 
The Bott periodicity of the K-theory induces the Bott periodicity of the mod $n$ K-theory.  
By the KK-equivalence of $C_0(M_n, pt)$ and $\mathcal{O}_{n+1}$, the identification 
$$
\tilde{K}^i(X\wedge M_n)=K_i(C_0(X, x_0)\otimes C_0(M_n, pt))\cong K_i(C_0(X, x_0)\otimes \mathcal{O}_{n+1})
$$
is natural in the variable $X$ (see \cite[Theorem 6.4]{S}). 
We can identify the Bockstein exact sequence with 
the 6-term exact sequence 
$$\xymatrix{
\tilde{K}^0(X)\ar[r]^{-n}&\tilde{K}^0(X)\ar[r]^{\rho}&\tilde{K}^0(X ; \mathbb{Z}_n)\ar[d]^{\beta}\\
K^1(X ; \mathbb{Z}_n)\ar[u]^{\beta}&K^1(X)\ar[l]^{\rho}&K^1(X).\ar[l]^{-n}
}$$
arising from the exact sequence 
$$0\to C_0(X,x_0)\otimes \mathbb{K}\to C_0(X,x_0)\otimes E_{n+1}\to C_0(X,x_0)\otimes \mathcal{O}_{n+1}\to 0.$$
The map $\beta$ is called Bockstein map, and $\rho$ is called the reduction map.
We frequently identify $\beta$ with the index map or the exponential map in the $6$-term exact sequence.

\begin{lem}\label{wer}
We have the following isomorphisms from the Bockstein exact sequence $\colon$
\begin{align*}
\rho \colon \tilde{K}^0(M_n)\to \tilde{K}^0(M_n ; \mathbb{Z}_n),\quad \beta \colon \tilde{K}^1(M_n ; \mathbb{Z}_n)\to \tilde{K}^0(M_n).
\end{align*}
\end{lem}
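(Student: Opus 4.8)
The plan is to substitute the known groups $\tilde{K}^0(M_n)=\Z_n$ and $K^1(M_n)=0$ into the six-term Bockstein sequence displayed above, specialized to $X=M_n$, and then extract the two isomorphisms by a short diagram chase. Written out cyclically, the relevant portion of the sequence reads
$$\tilde{K}^0(M_n)\xrightarrow{-n}\tilde{K}^0(M_n)\xrightarrow{\rho}\tilde{K}^0(M_n;\Z_n)\xrightarrow{\beta}K^1(M_n)\xrightarrow{-n}K^1(M_n)\xrightarrow{\rho}K^1(M_n;\Z_n)\xrightarrow{\beta}\tilde{K}^0(M_n).$$
The one observation that collapses everything is that multiplication by $-n$ on $\tilde{K}^0(M_n)=\Z_n$ is the zero map, while $K^1(M_n)=0$.

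For the first isomorphism I would argue as follows. Exactness at the middle copy of $\tilde{K}^0(M_n)$ gives $\ker\rho=\Ima(-n)=0$, so $\rho$ is injective; exactness at $\tilde{K}^0(M_n;\Z_n)$ gives $\Ima\rho=\ker\beta$, and since the term $K^1(M_n)$ into which $\beta$ maps is zero, $\rho$ is surjective as well. Hence $\rho\colon\tilde{K}^0(M_n)\to\tilde{K}^0(M_n;\Z_n)$ is an isomorphism (and both groups equal $\Z_n$). For the second isomorphism, exactness at $K^1(M_n;\Z_n)$ gives $\ker\beta=\Ima\bigl(\rho\colon K^1(M_n)\to K^1(M_n;\Z_n)\bigr)=0$ because $K^1(M_n)=0$, so $\beta$ is injective; and exactness at $\tilde{K}^0(M_n)$, now viewed as the target of $\beta$, gives $\Ima\beta=\ker\bigl(-n\colon\tilde{K}^0(M_n)\to\tilde{K}^0(M_n)\bigr)=\Z_n$, so $\beta$ is surjective. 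Thus $\beta\colon\tilde{K}^1(M_n;\Z_n)\to\tilde{K}^0(M_n)$ is an isomorphism; here I would also record that $\tilde{K}^1(M_n;\Z_n)=K^1(M_n;\Z_n)$, since $K^1(X;\Z_n)=\tilde{K}^1(X\wedge M_n)$ is already a reduced group by definition.

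I do not expect a genuine obstacle: the whole argument is a bare diagram chase. The only points demanding a moment's attention are keeping straight which arrow in the cyclic sequence is which, and remembering that $n\cdot\Z_n=0$, so that both $(-n)$-maps adjacent to $\tilde{K}^0(M_n)$ vanish — this is precisely what upgrades $\rho$ and $\beta$ from terms fitting into exact sequences to genuine isomorphisms.
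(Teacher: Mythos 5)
Your diagram chase is correct and is exactly the argument the paper has in mind: it states the lemma as an immediate consequence of the Bockstein six-term sequence together with $\tilde{K}^0(M_n)=\mathbb{Z}_n$, $K^1(M_n)=0$, which is precisely what you spell out. No gap; the observation that the maps $-n$ vanish on $\mathbb{Z}_n$ and that $K^1(M_n)=0$ is all that is needed.
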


The K-theory has a multiplication $\mu$ defined by the external tensor product of vector bundles:
$$\xymatrix{
\mu : K^0(X)\otimes K^0(Y)\ar[r]&K^0(X\times Y)\\
\tilde{K}^0(X)\otimes \tilde{K}^0(Y)\ar[u]\ar[r]&\tilde{K}^0(X\wedge Y)\ar[u]
}
$$
We denote the diagonal map by $\Delta_X : X\to X\times X$. This gives the ring structure of $K^0(X)$ :
$$x\cdot y :=\Delta_X^*\mu(x\otimes y), \;x, y\in K^0(X).$$
This induce the ring structure of $\tilde{K}^0(X)$ by $\Delta_X : X\to X\wedge X$ :
$$x\cdot y :=\Delta_X^*\mu(x\otimes y), \; x, y \in \tilde{K}^0(X).$$
From \cite[Chap.II, Theorem 5.9]{K}, the reduced K-group $\tilde{K}^0(X)$ is the set of nilpotent elements of $K^0(X)$, 
and in particular $\tilde{K}^0(\Sigma X)\cdot \tilde{K}^0(\Sigma X)=\{0\}$. 

The multiplication $\mu$ extends to $\tilde{K}^i(X), \; i=0, 1$ by 
$$\mu : \tilde{K}^0(S^i\wedge X)\otimes \tilde{K}^0(S^j\wedge Y)\to \tilde{K}^0(S^{i+j}\wedge X\wedge Y),$$
with the property
$$T_{X, Y}^*\mu(y\otimes x)=(-1)^{ij}\mu(x\otimes y), \; x\in \tilde{K}^i(X), \; y\in \tilde{K}^j(Y)$$
where the map $T_{X,Y} : X\wedge Y\to Y\wedge X$ is the exchange of the coordinates (see \cite[Chap. II section 5.30]{K}).
In a similar way, the multiplication $\mu$ defines the following :
$$\mu_L : \tilde{K}^i(X)\otimes \tilde{K}^j(Y ; \mathbb{Z}_n)\to \tilde{K}^{i+j}(X\wedge Y ; \mathbb{Z}_n),$$
$$\mu_R : \tilde{K}^i(X ; \mathbb{Z}_n)\otimes \tilde{K}^j(Y)\to \tilde{K}^{i+j}(X\wedge Y ; \mathbb{Z}_n),$$
with the same property (see \cite[Section 3]{A1}):
$$T_{X,Y}^*\mu_R(y\otimes x)=(-1)^{ij}\mu_L(x\otimes y), \; x\in \tilde{K}^i(X),\; y\in \tilde{K}^j(Y).$$
The multiplications $\mu$, $\mu_L$ and $\mu_R$ are compatible with the reduction $\rho$ and the map $\delta$ :
$$\mu_R(\rho \otimes{\rm id})=\rho\mu ,\;  \beta(\mu_R({\rm id}\otimes {\rm id}))=\mu(\beta\otimes {\rm id}),$$
$$\mu_L({\rm id} \otimes \rho)=\rho\mu ,\; \beta(\mu_L({\rm id}\otimes {\rm id}))=\mu({\rm id}\otimes \beta).$$
Since the identification $\tilde{K}^i(X;\mathbb{Z}_n)\cong K_i(C_0(X, x_0)\otimes\mathcal{O}_{n+1})$ is natural, it is compatible with the Kasparov product, and the multiplications $\mu_L$ and $\mu_R$ extend to 
\begin{align*}
\mu_L\colon &K_i(C(X))\otimes K_j(C(Y)\otimes\mathcal{O}_{n+1})\to K_{i+j}(C(X\times Y)\otimes\mathcal{O}_{n+1})\\
\mu_R\colon &K_i(C(X)\otimes \mathcal{O}_{n+1})\otimes K_j(C(Y))\to K_{i+j}(C(X\times Y)\otimes\mathcal{O}_{n+1}).
\end{align*}
In particular, for $u\in U((C(X)\otimes\mathcal{O}_{n+1}))$ and a projection $p\in C(X)\otimes \mathbb{M}_{m}$, we have
$$\mu_L([p]_0\otimes [u]_1)=[p\otimes u+(1_m-p)\otimes 1_{\mathcal{O}_{n+1}}]_1\in K_1(C(X\times X, \mathbb{M}_m\otimes\mathcal{O}_{n+1}))=K_1(C(X\times X, \mathcal{O}_{n+1})).$$

 We also use the K\"{u}nneth theorem of the reduced K-theory.
\begin{thm}[{\cite[Theorem 23.1.3]{Bl}}]\label{Kunn}
For pointed spaces $X$ and $Y$, we have the following exact sequence
$$0\to \bigoplus_{i=0,1}\tilde{K}^i(X)\otimes\tilde{K}^{i+*}(Y)\to \tilde{K}^*(X\wedge Y)\to \bigoplus_{i=0,1} 
\operatorname{Tor}(\tilde{K}^i(X), \tilde{K}^{i+1-*}(Y))\to 0,$$
that splits unnaturally.
\end{thm}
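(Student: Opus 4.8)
The plan is to reduce this purely topological statement to the Künneth theorem for the K-theory of C*-algebra tensor products and then to indicate a proof of the latter by the method of geometric resolutions. First I would invoke the identifications $\tilde{K}^i(X)=K_i(C_0(X,x_0))$ and $\tilde{K}^i(Y)=K_i(C_0(Y,y_0))$ together with the canonical isomorphism $C_0(X\wedge Y, pt)\cong C_0(X,x_0)\otimes C_0(Y,y_0)$, which turns the smash product into a tensor product and gives $\tilde{K}^*(X\wedge Y)=K_*(C_0(X,x_0)\otimes C_0(Y,y_0))$. Since commutative C*-algebras are nuclear and lie in the bootstrap class, both factors satisfy the hypotheses needed below, and the $\Z/2$-grading on K-theory matches the indexing $i+*$ and $i+1-*$ appearing in the statement.

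The core is then the following C*-algebraic assertion: for a nuclear algebra $A$ in the bootstrap class and any nuclear $B$, there is a short exact sequence of $\Z/2$-graded groups
$$0\to \bigoplus_{i+j=*}K_i(A)\otimes K_j(B)\to K_*(A\otimes B)\to \bigoplus_{i+j=*-1}\Tor(K_i(A),K_j(B))\to 0$$
that splits unnaturally. To prove it I would choose a free resolution $0\to F_1\to F_0\to K_*(A)\to 0$ of the $\Z/2$-graded group $K_*(A)$, realize $F_0$ as $K_*(D_0)$ for a direct sum $D_0$ of copies of $\C$ and $C_0(\R)$ (one summand per free generator, in the appropriate degree), and, using that $A$ satisfies the UCT, lift the surjection $F_0\to K_*(A)$ to a class in $KK(D_0,A)$. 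Representing this class by a $*$-homomorphism after a suitable stabilization and passing to its mapping cone produces a second algebra $D_1$ with $K_*(D_1)=F_1$ and a short exact sequence of C*-algebras whose six-term K-theory sequence degenerates into the chosen resolution; this is the geometric resolution of $A$.

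Next I would tensor the geometric resolution with $B$. Because $D_0$, $D_1$ and $A$ are nuclear, tensoring preserves exactness, and the resulting six-term sequence computes $K_*(A\otimes B)$. For the free building blocks one has $K_*(\C\otimes B)=K_*(B)$ and $K_*(C_0(\R)\otimes B)=K_{*+1}(B)$, so $K_*(D_i\otimes B)=F_i\otimes K_*(B)$ with all $\Tor$-terms absent, the $F_i$ being free. Identifying the connecting map with $d\otimes\id$, where $d\colon F_1\to F_0$, I find that its cokernel is $K_*(A)\otimes K_*(B)$ and its kernel is $\Tor(K_*(A),K_*(B))$, exactly as in the algebraic derivation of the universal-coefficient formula; assembling these gives the short exact sequence above. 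The splitting follows from the freeness of $D_0$ (one lifts generators through the building blocks) and is unnatural because it depends on the choice of resolution and of the representing $KK$-class.

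The step I expect to be the main obstacle is the construction of the geometric resolution: realizing the algebraic map $F_0\to K_*(A)$ by a genuine morphism of C*-algebras through $KK$-theory and the mapping-cone construction, so that its six-term sequence collapses to the prescribed free resolution. This is precisely where the bootstrap hypothesis, via the UCT, is indispensable; once it is available, the remaining homological bookkeeping and the verification of the unnatural splitting are routine.
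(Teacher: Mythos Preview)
The paper does not prove this theorem at all: it is simply quoted as Theorem~\ref{Kunn} with the citation \cite[Theorem 23.1.3]{Bl} and no argument is given. So there is no ``paper's own proof'' to compare against; the authors treat the K\"unneth theorem as a black box and only use it to compute $\tilde K^*(M_n\times\Sigma M_n)$.

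Your sketch is a faithful outline of the Rosenberg--Schochet proof as presented in Blackadar. A couple of small points of polish: the hypothesis on $B$ can be weakened (one only needs $A$ in the bootstrap class, not $B$; nuclearity of $B$ is irrelevant once the tensor product is unambiguous), and in the geometric resolution step it is cleaner to realize the map $F_0\to K_*(A)$ by an honest $*$-homomorphism into $A\otimes\K$ rather than invoking the UCT first---one just sends each free generator to a projection or unitary representing the corresponding $K$-class, and the mapping cone of that map is the $D_1$. The UCT is the output of this construction rather than an input. Otherwise the plan is sound, and you have correctly identified the one nontrivial step (building the geometric resolution so that the six-term sequence collapses).
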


We note that the map $\tilde{K}^i(X)\otimes\tilde{K}^j(Y)\to \tilde{K}^{i+j}(X\wedge Y)$ above is given by the multiplication $\mu$.

Puppe sequence yields the following lemmas.

\begin{lem}[{\cite[Section 10, Proposition 3.4]{H}}]\label{yab}
For compact pointed spaces $X$ and $Y$, the sequence $X\vee Y\to X\times Y\to X\wedge Y$ induces 
a split exact sequence
$$0\to\tilde{K}^i(X\wedge Y)\to\tilde{K}^i(X\times Y)\to \tilde{K}^i(X)\oplus \tilde{K}^i(Y)\to 0.$$
The splitting is given by the projections ${\rm Pr}_X\colon X\times Y\to X$ and ${\rm Pr}_Y\colon X\times Y\to Y$.
\end{lem}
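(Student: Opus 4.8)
The plan is to identify this sequence with a cofiber (Puppe) sequence and read off the splitting from the two coordinate projections. By definition $X\wedge Y=(X\times Y)/(X\vee Y)$, and $j\colon X\vee Y\hookrightarrow X\times Y$ is a cofibration, so there is a cofiber sequence $X\vee Y\to X\times Y\to X\wedge Y$; applying reduced K-theory to its Puppe prolongation and using Bott periodicity produces the six-term exact sequence
$$\cdots\to\tilde{K}^{i}(X\wedge Y)\xrightarrow{q^*}\tilde{K}^{i}(X\times Y)\xrightarrow{j^*}\tilde{K}^{i}(X\vee Y)\xrightarrow{\delta}\tilde{K}^{i+1}(X\wedge Y)\to\cdots,$$
where $q\colon X\times Y\to X\wedge Y$ is the quotient map and indices are read mod $2$. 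Since here $X$ and $Y$ are only assumed compact, I would in fact prefer to obtain the same six-term sequence with no cofibration hypothesis, from the short exact sequence of C*-algebras
$$0\to C_0(X\wedge Y,\, pt)\to C_0(X\times Y,\, pt)\to C_0(X\vee Y,\, pt)\to 0,$$
where the surjection is restriction of functions and its kernel is $C_0\big((X\setminus x_0)\times(Y\setminus y_0)\big)\cong C_0(X\wedge Y,\, pt)$.

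Next I would identify the right-hand term and construct a splitting. One has $\tilde{K}^i(X\vee Y)\cong\tilde{K}^i(X)\oplus\tilde{K}^i(Y)$, either from the split cofiber sequence $X\to X\vee Y\to Y$ or from $C_0(X\vee Y,\, pt)\cong C_0(X,x_0)\oplus C_0(Y,y_0)$, and under this identification $j^*$ becomes the pair of restrictions along the standard inclusions $i_X\colon X\to X\times Y$ and $i_Y\colon Y\to X\times Y$. Define
$$\sigma\colon\tilde{K}^i(X)\oplus\tilde{K}^i(Y)\to\tilde{K}^i(X\times Y),\qquad(a,b)\longmapsto\mathrm{Pr}_X^*(a)+\mathrm{Pr}_Y^*(b).$$
Then $j^*\circ\sigma=\id$ by the one-line computation $\mathrm{Pr}_X\circ i_X=\id_X$, while $\mathrm{Pr}_Y\circ i_X$ is constant (and symmetrically for $i_Y$), so $j^*\sigma(a,b)=(a,0)+(0,b)=(a,b)$.

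Finally, since $j^*$ is split surjective, exactness in the six-term sequence forces every connecting map $\delta$ to vanish; hence $q^*$ is injective and the sequence collapses to the asserted short exact sequence
$$0\to\tilde{K}^i(X\wedge Y)\xrightarrow{q^*}\tilde{K}^i(X\times Y)\xrightarrow{j^*}\tilde{K}^i(X)\oplus\tilde{K}^i(Y)\to 0,$$
split by $\sigma$, that is, by the pullbacks along $\mathrm{Pr}_X$ and $\mathrm{Pr}_Y$. The only step that is not purely formal is the first one, namely justifying the long exact sequence for the category of spaces at hand, and the C*-algebraic reformulation disposes of it, so I do not expect a real obstacle.
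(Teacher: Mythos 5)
Your argument is correct and is essentially the paper's own route: the paper simply invokes the Puppe (cofiber) sequence of $X\vee Y\to X\times Y\to X\wedge Y$ together with the splitting by $\mathrm{Pr}_X^*$ and $\mathrm{Pr}_Y^*$, citing Husemoller, which is exactly what you spell out. Your C*-algebraic substitute via $0\to C_0(X\wedge Y,pt)\to C_0(X\times Y,pt)\to C_0(X\vee Y,pt)\to 0$ is a legitimate way to get the six-term sequence for arbitrary compact Hausdorff pointed spaces, and the rest (killing the connecting maps by the split surjection) matches the intended proof.
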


We have the diagram below
$$\xymatrix{
K^i(X\times Y)&K^i(X)\ar[l]^{\mu(\cdot \otimes 1)}\\
\tilde{K}^i(X\times Y)\ar[u]&\tilde{K}^i(X)\ar[u]\ar[l]^{{\rm Pr}^*_X}
}$$
where $1\in K^0(\{y_0\})$. 
So we identify the map ${\rm Pr}_X^*$ with the map $\mu(\cdot \otimes 1)$. We also identify ${\rm Pr}_Y^*$ with 
the map $\mu(1 \otimes \cdot)$ where $1\in K^0(\{x_0\})=\mathbb{Z}$.
%\begin{thm}[{\cite[Theorem 2.2]{DW}, \cite[Theorem 3.1]{W}}]\label{stro}
%Let $A$ and $D$ be unital C*-algebras, with $D$ separable strongly self-absorbing. Then any two  unital $*$-homomorphism $\psi_1, \;\psi_2 : D\to A\otimes D$ are homotopic.
%\end{thm}

%%%%%%%%%%%%%%%%%%%%%%%%%%%%%%%%%%%%%%%%%%%%%%%%%%%%%%%%%%%%%%%%%%%%%%%%%%%%%%%%%%%%%%%%%%%%%%%%%%%%%%%%%%%%%%%%%%%%%%%%%%%%%
%%%%%%%%%%%%%%%%%%%%%%%%%%%%%%%%%%%%%%%%%%%%%%%%%%%%%%%%%%%%%%%%%%%%%%%%%%%%%%%%%%%%%%%%%%%%%%%%%%%%%%%%%%%%%%%%%%%%%%%%%%%%%
%%%%%%%%%%%%%%%%%%%%%%%%%%%%%%%%%%%%%%%%%%%%%%%%%%%%%%%%%%%%%%%%%%%%%%%%%%%%%%%%%%%%%%%%%%%%%%%%%%%%%%%%%%%%%%%%%%%%%%%%%%%%%
\section{The group structure of $[X, \operatorname{Aut}\mathcal{O}_{n+1}]$}

\subsection{Description of the group structure}
Let $(X,x_0)$ be a pointed compact metrizable space. 
For every $\alpha\in\operatorname{Map}(X, \operatorname{Aut}\mathcal{O}_{n+1})$, we set
$$u_{\alpha}=\sum_{i=0}^{n}\alpha(1_{C(X)}\otimes S_i)(1_{C(X)}\otimes S^*_i)\in U(C(X)\otimes \mathcal{O}_{n+1}).$$ 
By \cite[Theorem 7.4]{D2}, the map 
$$[X, \operatorname{Aut}\mathcal{O}_{n+1}]\ni [\alpha]\mapsto [u_{\alpha}]_1\in K_1(C(X)\otimes \mathcal{O}_{n+1})=K^1(X;\Z_n)$$
is a bijection, though it is not a group homomorphism in general as we will see below. 
From the definition of $u_{\alpha}$, we have $u_{\alpha\beta}(x)=\alpha_x(u_\beta(x))u_\alpha(x)$, and 
$[u_{\alpha\beta}]_1=[u_\alpha]_1+[\alpha(u_\beta)]_1$. 
Thus to determine the group structure of $[X,\Aut\cO_{n+1}]$, it suffices to determine the map 
$$K_1(\alpha)\colon K_1(C(X)\otimes \cO_{n+1})\to K_1(C(X)\otimes \cO_{n+1}),$$
induced by $u(x)\mapsto \alpha_x(u(x))$.

\begin{thm}\label{Iz}
For every $\alpha \in \operatorname{Map}(X, \operatorname{Aut}\mathcal{O}_{n+1})$ and $a\in K_1(C(X)\otimes \mathcal{O}_{n+1})$, we have 
$$K_1(\alpha)(a)=a-[u_{\alpha}]_1\cdot \delta(a),$$
where $\delta : K_1(C(X)\otimes \mathcal{O}_{n+1})\to \operatorname{Tor}(\tilde{K}^0(X), \mathbb{Z}_n)$ is the index map.
\end{thm}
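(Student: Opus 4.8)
The plan is to reduce the computation of $K_1(\alpha)$ to a universal case and then identify the resulting map by a naturality-plus-multiplicativity argument. First I would observe that the assignment $\alpha\mapsto u_\alpha$ together with the relation $u_{\alpha\beta}(x)=\alpha_x(u_\beta(x))u_\alpha(x)$ shows that $K_1(\alpha)$ depends only on the homotopy class of $\alpha$, hence only on $[u_\alpha]_1\in K^1(X;\Z_n)$. So it suffices to treat the \emph{universal} situation: take $X$ to be a space carrying a ``tautological'' class, apply the Cuntz--Toeplitz extension $0\to C(X)\otimes\K\to C(X)\otimes E_{n+1}\to C(X)\otimes\cO_{n+1}\to 0$, and use that $u_\alpha\in U(C(X)\otimes\cO_{n+1})$ lifts (after stabilizing) to a partial isometry in $C(X)\otimes E_{n+1}$ whose defect projections, together with the minimal projection $e$, encode $\delta(a)$. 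Concretely, for $u\in U(C(X)\otimes\cO_{n+1})$ one lifts $\sum S_i\,(\cdot)\,S_i^*$ to $\sum T_i\,(\cdot)\,T_i^*$ on the Toeplitz side, and the failure of this to be unitary is governed by the rank-one projection $e$; this is exactly the mechanism by which $\delta$ is the index/exponential map of the extension (as recalled just before Lemma~\ref{wer}).

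Next I would compute $K_1(\alpha)(a)-a$ directly for $a=[u]_1$ using the formula $\alpha_x(u(x))=u_\alpha(x)\,\theta_x(u(x))$-type manipulations: write $\alpha_x(u(x))$ in terms of $u_\alpha$ and the generators, lift everything to $C(X)\otimes E_{n+1}$, and read off the $K_1$-class of the resulting unitary as a sum of $[u]_1$ and a correction term. The correction term will be expressible via the Cuntz--Toeplitz boundary map, i.e.\ in terms of $\delta(a)\in\Tor(\tilde K^0(X),\Z_n)\subset\tilde K^0(X;\Z_n)$ paired against $[u_\alpha]_1$. Here is where the multiplication structure enters: the product $[u_\alpha]_1\cdot\delta(a)$ lands in $K_1(C(X)\otimes\cO_{n+1})$ via $\mu$ (or rather $\mu_L/\mu_R$ combined with the diagonal $\Delta_X$), and the explicit formula $\mu_L([p]_0\otimes[u]_1)=[p\otimes u+(1_m-p)\otimes 1_{\cO_{n+1}}]_1$ recorded in the excerpt is precisely the shape of correction term one expects to see when $e$ (a rank-one projection over a point) gets replaced in families by the projection classifying $\delta(a)$. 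The compatibilities $\beta\mu_R(\id\otimes\id)=\mu(\beta\otimes\id)$ etc.\ let me move freely between the mod-$n$ picture and the ordinary picture.

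The main obstacle, I expect, is the sign and the precise matching of $\mu_L$ versus $\mu_R$: one must verify that the correction is $-[u_\alpha]_1\cdot\delta(a)$ and not $+$, and that the two slots are filled in the order that makes $\cdot$ well-defined as a pairing $K^1(X;\Z_n)\otimes\Tor(\tilde K^0(X),\Z_n)\to K^1(X;\Z_n)$ (note $\Tor(\tilde K^0(X),\Z_n)$ sits inside $\tilde K^0(X;\Z_n)$ via $\beta$, so the product is really $\mu_R$ followed by $\Delta_X^*$, using the graded-commutativity relation $T_{X,Y}^*\mu_R(y\otimes x)=(-1)^{ij}\mu_L(x\otimes y)$). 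A secondary technical point is handling the stabilization: $u_\alpha$ is a unitary over $\cO_{n+1}$ itself, but to realize $\delta(a)$ one needs projections in matrix algebras over $C(X)$, so the lift to $E_{n+1}$ must be done after tensoring with $\M_m$, and one should check the construction is independent of $m$ and compatible with the identification $K_1(C(X\times X)\otimes\M_m\otimes\cO_{n+1})=K_1(C(X\times X)\otimes\cO_{n+1})$. Once the universal case is pinned down with correct signs, naturality in $X$ (pulling back along the classifying map of $[u_\alpha]_1$) and additivity of both sides in $a$ finish the proof.
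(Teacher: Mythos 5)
There is a genuine gap: the heart of the theorem is never actually proved in your outline. You correctly assemble the surrounding ingredients (the Cuntz--Toeplitz extension, the identification of $\delta$ with the boundary map, the formula for $\mu_L([p]_0\otimes[u]_1)$, additivity of $K_1(\alpha)$, and the fact that the correction should only depend on $\delta(a)$), but the step ``write $\alpha_x(u(x))$ in terms of $u_\alpha$ and the generators, lift to $C(X)\otimes E_{n+1}$, and read off the correction term'' is precisely the difficult point, and you give no mechanism for it. For an arbitrary unitary $u$ over $C(X)\otimes\cO_{n+1}$ there is no usable formula for the class of $\alpha(u)$ in terms of $[u]_1$ and $[u_\alpha]_1$; knowing $\alpha(S_i)=u_\alpha S_i$ does not let you ``read off'' $[\alpha(u)]_1$. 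What the paper does instead is choose a very particular representative: given $b\in\Tor(\tilde K^0(X),\Z_n)$, it uses a unitary $v$ witnessing the $n$-torsion relation $v(1_n\otimes q)v^{-1}=1_n\otimes p$ to build, by an explicit chain of unitaries over $C(X)\otimes E_{n+1}\otimes\M_{10m}$, a unitary $W$ over $C(X)\otimes\cO_{n+1}\otimes\M_{5m}$ with $\delta([W]_1)=b$, constructed only from $S_0$ and elements of the form $S_iS_j^*$. The key point making the computation possible is that $\beta=\Ad u_\alpha^*\circ\alpha$ satisfies $\beta(S_iS_j^*)=S_iS_j^*$ and $\beta(S_0)=S_0u_\alpha$, so $W^*\beta(W)$ can be computed in closed form and its class is exactly $[u_\alpha]_1([q]_0-[p]_0)=-[u_\alpha]_1\cdot b$; the general case then follows because any class is $[W]_1+a$ with $a\in\rho(K^1(X))$, on which $K_1(\alpha)$ acts trivially and $\delta$ vanishes. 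Your proposal contains no analogue of this construction, and without it the ``correction term'' remains an expectation rather than a computation (including its sign, which you explicitly leave open).

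The proposed reduction to a ``universal case'' is also not available as stated: the theorem is about compact metrizable $X$, and there is no compact space carrying a tautological $K^1(\cdot;\Z_n)$ class (let alone simultaneously a universal $a$), so Dadarlat's bijection and the naturality you want to invoke do not directly apply to such a universal object; making this rigorous would require substantial extra work (e.g.\ passing to classifying spaces or spectra and re-proving the identification there). The paper avoids this entirely: it fixes $X$, exploits that the kernel of $\delta$ is $\rho(K^1(X))$, where elements have representatives commuting with every $\alpha_x$, and verifies the formula on the explicitly constructed classes $[W]_1$; additivity then gives the statement for all $a$. So while your outline points at the right structural features, the essential construction of a computable preimage of $\delta$ and the explicit evaluation of $\alpha$ on it are missing, and that is where the entire content of the proof lies.
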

\begin{proof} 
For a given $b\in \Tor(K^0(X),\Z_n)$, we look for the preimage $\delta^{-1}(b)$ first. 
We may assume that $b$ is of the form $[p]_0-[1_m]_0$ with a projection $p\in C(X)\otimes\mathbb{M}_{2m}$ such that 
there exists a unitary $v\in C(X)\otimes\mathbb{M}_{2nm}$ satisfying $v(1_n\otimes q)v^{-1}=1_n\otimes p$, where 
$q=\Diag(1_m,0_m)$. 

Identifying $K^0(X)$ with $K_0(C(X)\otimes\K))$, we may replace $p$ and $q$ with $e\otimes p$ and $e\otimes q$ respectively, 
where $e=1_{E_{n+1}}-\sum_{i=0}^nT_iT_i^*$ is a minimal projection in $\K\subset E_{n+1}$. 
Furthermore, we may adjoin $1_{E_{n+1}}$ to $C(X)\otimes\K$, and 
$$c=[(1_{E_{n+1}}-e)\otimes q+e\otimes p]-[1_{E_{n+1}}\otimes q].$$
In what follows, we simply denote $1=1_{E_{n+1}}$, and often denote $1_{2m}$ for $1\otimes 1_{2m}$. 

We will construct a unitary $U\in C(X)\otimes E_{n+1}\otimes \mathbb{M}_{10m}$ satisfying 
$$U\Diag((1-e)\otimes q+e\otimes p,1_{4m},0_{4m})U^{-1}=\Diag(q,1_{4m},0_{4m}).$$
Expressing $v(x)=\sum_{i,j=1}^n e_{i,j}\otimes v_{i,j}(x)$, where $\{e_{i,j}\}_{1\leq i,j\leq n}$ is a system 
of matrix units $\M_n$, we let 
$$\tilde{v}(x)=(e+T_0T_0^*)\otimes 1_{2m}+\sum_{i,j=1}^nT_iT_j^*\otimes v_{i,j}(x).$$ 
Then $\tilde{v}$ is a unitary in $C(X)\otimes E_{n+1}\otimes \mathbb{M}_{2m}$ satisfying 
$$\tilde{v}((1-e)\otimes q+e\otimes p)\tilde{v}^*=T_0T_0^*\otimes q+(1-T_0T_0^*)\otimes p.$$
Thus if we put 
$$U_1=
\Diag(\tilde{v},
\left(
\begin{array}{cccc}
0 &0 &1_{2m} &0\\
0 &1_{2m} &0 &0  \\
1_{2m} &0 &0 &0  \\
0 &0 &0 &1_{2m} 
\end{array}
\right)
),$$ 
we get 
\begin{align*}
\lefteqn{U_1\Diag((1-e)\otimes q+e\otimes p,1_{4m},0_{4m})U_1^{-1}} \\
 &=\Diag(T_0T_0^*\otimes q+(1-T_0T_0^*)\otimes p,0_{2m},1_{4m},0_{2m}).
\end{align*}
Let 
$$U_2=\Diag(\left(
\begin{array}{cc}
T_0T_0^*\otimes 1_{2m} &(1-T_0T_0^*)\otimes 1_{2m}  \\
(1-T_0T_0^*)\otimes 1_{2m} & T_0T_0^*\otimes 1_{2m}
\end{array}
\right),1_{6m}
),$$
Then \begin{align*}
\lefteqn{U_2\Diag(T_0T_0^*\otimes q+(1-T_0T_0^*)\otimes p,0_{2m},1_{4m},0_{2m})U_2^{-1}} \\
 &=\Diag(T_0T_0^*\otimes q,(1-T_0T_0^*)\otimes p,1_{4m},0_{2m}).
\end{align*}
Let $U_3=\Diag(1_{2m},V_1,V_2)$ with 
$$V_1=\left(
\begin{array}{cc}
1_{2m}-T_0T_0^*\otimes p &T_0T_0^*\otimes p  \\
T_0T_0^*\otimes p &1_{2m}-T_0T_0^*\otimes p 
\end{array}
\right),$$
$$V_2=\left(
\begin{array}{cc}
T_0T_0^*\otimes q &1_{2m}-T_0T_0^*\otimes q  \\
1_{2m}-T_0T_0^*\otimes q &T_0T_0^*\otimes q 
\end{array}
\right).$$
Then 
\begin{align*}
\lefteqn{U_3\Diag(T_0T_0^*\otimes q,(1-T_0T_0^*)\otimes p,1_{4m},0_{2m})U_3^{-1}} \\
 &=\Diag(T_0T_0^*\otimes q,1\otimes p,1_{2m}-T_0T_0^*\otimes p,T_0T_0^*\otimes q,1_{2m}-T_0T_0^*\otimes q).
\end{align*}
Let 
$$U_4=\Diag(1_{2m},\left(
\begin{array}{ccc}
T_0\otimes 1_{2m} &0 &(1-T_0T_0^*)\otimes 1_{2m}  \\
0 &1_{2m} &0  \\
0 &0 &T_0^*\otimes 1_{2m} 
\end{array}
\right)
,1_{2m}).$$
Then 
\begin{align*}
\lefteqn{U_4\Diag(T_0T_0^*\otimes q,1\otimes p,1_{2m}-T_0T_0^*\otimes p,T_0T_0^*\otimes q,1_{2m}-T_0T_0^*\otimes q)U_4^{-1}} \\
 &=\Diag(T_0T_0^*\otimes q,T_0T_0^*\otimes p,1_{2m}-T_0T_0^*\otimes p,q,1_{2m}-T_0T_0^*\otimes q)
\end{align*}
Let 
$$U_5=\left(
\begin{array}{ccccc}
T_0T_0^*\otimes q &0 &0 &0 &1_{2m}-T_0T_0^*\otimes q  \\
0 &T_0T_0^*\otimes p &1_{2m}-T_0T_0^*\otimes p &0 &0  \\
0 &1_{2m}-T_0T_0^*\otimes p &T_0T_0^*\otimes p &0 &0  \\
0 &0 &0 &1_{2m} &0  \\
1_{2m}-T_0T_0^*\otimes q &0 &0 &0 &T_0T_0^*\otimes q 
\end{array}
\right)
.$$
Then 
$$U_5\Diag(T_0T_0^*\otimes q,T_0T_0^*\otimes p,1_{2m}-T_0T_0^*\otimes p,q,1_{2m}-T_0T_0^*\otimes q)U_5^{-1}
=\Diag(1_{2m},1_{2m},q,0_{4m}).$$
Let 
$$U_6=\left(
\begin{array}{ccccc}
0 &0 &0 &1_{2m} &0  \\
0 &1_{2m} &0 &0 &0  \\
1_{2m} &0 &0 &0 &0  \\
0 &0 &1_{2m} &0 &0  \\
0 &0 &0 &0 &1_{2m} 
\end{array}
\right).
$$
Then 
$$U_6\Diag(1_{2m},1_{2m},q,0_{4m})U_6^{-1}=\Diag(q,1_{4m},0_{4m}).$$
Thus if we put $U=U_6U_5U_4U_3U_2U_1$, we get 
$$U\Diag((1-e)\otimes q+e\otimes p,1_{4m},0_{4m})U^{-1}=\Diag(q,1_{4m},0_{4m}).$$

Recall that $\pi:E_{n+1}\to \cO_{n+1}$ is the quotient map, 
Since 
$$(\pi\otimes \id_{M_{10m}})(\Diag((1-e)\otimes q+e\otimes p,1_{4m},0_{4m}))=\Diag(q,1_{4m},0_{4m}),$$
the unitary $(\pi\otimes \id_{M_{10m}})(U)$ commutes with $\Diag(q,1_{4m},0_{4m})$. 
Let
$$W=\Diag(q,1_{4m},0_{4m})(\pi\otimes \id_{M_{10m}})(U^{-1})\Diag(q,1_{4m},0_{4m}),$$
which we regard as a unitary in $C(X,\cO_{n+1}\otimes M_{5m})$. 
Then by the definition of the index map, we get $\delta([W]_1)=b$. 

Let 
$$V(x)=\sum_{i,j=1}^nS_iS_j^*\otimes v_{i,j}(x).$$
Direct computation yields
$$W=\left(
\begin{array}{ccc}
0 &V^*(S_0^*\otimes p) &S_0S_0^*\otimes q  \\
S_0\otimes q &0 &1_{2m}-S_0S_0^*\otimes q  \\
0 &1_{2m}-S_0S_0^*\otimes p+S_0S_0^*S_0^*\otimes p &0 
\end{array}
\right).
$$
Let $\beta=\Ad u_\alpha^*\circ \alpha$. 
Then $K_1(\alpha)=K_1(\beta)$, and $\beta(S_i)=S_iu_\alpha$. 
Now 
\begin{align*}
\lefteqn{W^*(\beta\otimes \id_{M_{5m}})(W)} \\
 &=
\left(
\begin{array}{ccc}
0 &S_0^*\otimes q & 0 \\
(S_0\otimes p)V &0 &1_{2m}-S_0S_0^*\otimes p+S_0^2S_0^*\otimes p  \\
S_0S_0^*\otimes q &1_{2m}-S_0S_0^*\otimes q &0 
\end{array}
\right) \\
&\times  
 \left(
\begin{array}{ccc}
0 &V^*(u_{\alpha}^{-1}S_0^*\otimes p) &S_0S_0^*\otimes q  \\
S_0u_\alpha\otimes q &0 &1_{2m}-S_0S_0^*\otimes q  \\
0 &1_{2m}-S_0S_0^*\otimes p+S_0 S_0^*u_\alpha^{-1}S_0^*\otimes p &0 
\end{array}
\right)
\\
 &=\left(
\begin{array}{ccc}
u_\alpha\otimes q &0 &0  \\
 0&S_0u_\alpha^{-1}S_0^*\otimes p+1_{2m}-S_0S_0^*\otimes p &0  \\
 0&0 &1_{2m} 
\end{array}
\right)
 \\
 &=\Diag(u_\alpha\otimes q, 
 \left(
\begin{array}{cc}
S_0\otimes 1_{2m} &(1-S_0S_0^*)\otimes 1_{2m}  \\
0 &S_0^*\otimes 1_{2m} 
\end{array}
\right)
\left(
\begin{array}{cc}
u_\alpha^{-1}\otimes p+1\otimes (1_{2m}-p) &0  \\
0 &1_{2m} 
\end{array}
\right)\\
&\times \left(
\begin{array}{cc}
S_0^*\otimes 1_{2m} &0  \\
(1-S_0S_0^*)\otimes 1_{2m} &S_0\otimes 1_{2m} 
\end{array}
\right)
),
\end{align*}
whose $K_1$-class is 
$$[u_\alpha]_1([q]_0-[p]_0)=-[u_\alpha]_1\cdot b=-[u_\alpha]_1\cdot\delta([W]_1).$$
Thus
$$K_1(\alpha)([W]_1)=[W]_1-[u_\alpha]_1\cdot\delta([W]_1).$$
Since $K_1(\alpha)(a)=a$ and $\delta(a)=0$ hold for any $a\in \rho(K^1(X))$, we get 
$$K_1(\alpha)([W]_1+a)=[W]_1+a-[u_\alpha]_1\cdot\delta([W]_1+a),$$
which finishes the proof.  
\end{proof}

Recall that we identify the index map $\delta$ with the Bockstein map $\beta$. 
By Theorem \ref{Iz}, the group $[X, \operatorname{Aut}\mathcal{O}_{n+1}]$ is isomorphic to $(K^1(X ; \mathbb{Z}_n), \circ)$ with  
$$a\circ b\colon =a+b-a\cdot \beta(b), \;a, b\in K^1(X ;\mathbb{Z}_n).$$
Note that $(K^1(X ; \mathbb{Z}_n), \circ)$ is a group extension 
$$0\to K^1(X)\otimes \Z_n\to (K^1(X ; \mathbb{Z}_n), \circ)\xrightarrow{\hat{\beta}} (1+\operatorname{Tor}(\tilde{K}^0(X),\Z_n))^\times\to 0,$$
where $\hat{\beta}(a)=1-\beta(a)$. 
We denote the inverse of an element $a\in (\tilde{K}^1(X ; \mathbb{Z}_n), \circ)$ by $a^{\circ (-1)}$.

\begin{lem}\label{inverse}
For any $a,b \in(\tilde{K}^1(X ; \mathbb{Z}_n), \circ)$, we have 
\begin{itemize}
\item[$(1)$] $a^{\circ(-1)}=-a\cdot (1-\beta(a))^{-1}$.
\item[$(2)$] $a^{\circ(-1)}\circ b\circ a=b+a\cdot\beta(b)-\beta(a)\cdot b$. 
In particular, if $b\in K^1(X)\otimes \Z_n=\ker \beta$, we have 
$a^{\circ(-1)}\circ b\circ a=(1-\beta(a))b$. 
\end{itemize}
\end{lem}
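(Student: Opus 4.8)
The plan is to prove both statements by purely algebraic manipulation inside the group $(\tilde K^1(X;\Z_n),\circ)$, using the defining formula $a\circ b=a+b-a\cdot\beta(b)$, together with the two structural facts already established in the excerpt: first, that $\tilde K^0(\Sigma X)\cdot\tilde K^0(\Sigma X)=\{0\}$, hence in the reduced setting $a\cdot b$ ``lowers degree'' enough that triple products involving two reduced $K^1$-classes and a $\beta$ vanish; and second, that $\beta$ is (identified with) a derivation-free additive map whose image lies in $\operatorname{Tor}(\tilde K^0(X),\Z_n)$, so that $\beta(a)\cdot\beta(b)$ type terms are products of two elements of $\tilde K^0(X)$ and the compatibility $\beta(a\cdot b)$ can be read off from $\mu$. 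The key point making everything collapse is that whenever a product contains the factor $\beta(\text{something})\in\operatorname{Tor}(\tilde K^0(X),\Z_n)\subset\tilde K^0(X)$ \emph{and} another reduced even class, or two $\beta$'s, the result is zero because it factors through $\tilde K^0(\Sigma X)\cdot\tilde K^0(\Sigma X)$ after suspension; I would isolate this as the one computational lemma and invoke it repeatedly.

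For part $(1)$: I would simply verify that $c:=-a\cdot(1-\beta(a))^{-1}$ satisfies $a\circ c=0$. Here $(1-\beta(a))^{-1}$ makes sense because $\beta(a)$ is nilpotent (it lies in $\operatorname{Tor}(\tilde K^0(X),\Z_n)$, a subgroup of the ideal $\tilde K^0(X)$ of nilpotents in $K^0(X)$), so $1-\beta(a)$ is a unit in $K^0(X)$ with inverse $\sum_{k\ge0}\beta(a)^k$. Expanding, $a\circ c=a+c-a\cdot\beta(c)$; since $c=-a\cdot(1-\beta(a))^{-1}$ is a multiple of $a\in\tilde K^1$, the term $\beta(c)=\beta\big(-a\cdot(1-\beta(a))^{-1}\big)$ is a sum of terms $-\beta(a\cdot\beta(a)^k\cdot(\dots))$, and by the compatibility $\beta(\mu_L(\mathrm{id}\otimes\mathrm{id}))=\mu(\mathrm{id}\otimes\beta)$ one gets $\beta(c)=-\beta(a)\cdot(1-\beta(a))^{-1}$, so $a\cdot\beta(c)=-a\cdot\beta(a)\cdot(1-\beta(a))^{-1}$; here $a\cdot\beta(a)\cdot(1-\beta(a))^{-1}$ expands in powers of $\beta(a)$, and all terms $a\cdot\beta(a)^{k}$ with $k\ge1$ — and indeed the computation — must telescope against $c$. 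Concretely $a+c-a\cdot\beta(c)=a - a(1-\beta(a))^{-1} + a\beta(a)(1-\beta(a))^{-1}=a - a(1-\beta(a))(1-\beta(a))^{-1}=a-a=0$. One must also check this is a two-sided inverse, which follows from the same identity read on the other side or from the general fact that $(\tilde K^1(X;\Z_n),\circ)$ is a group (a left inverse in a group is a two-sided inverse).

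For part $(2)$: I would compute $a^{\circ(-1)}\circ b\circ a$ by first forming $b\circ a=b+a-b\cdot\beta(a)$ — note $\beta(a)$ here denotes the actual element — and then left-composing with $a^{\circ(-1)}=-a\cdot(1-\beta(a))^{-1}$ from $(1)$. The expansion produces $a^{\circ(-1)}+(b\circ a)-a^{\circ(-1)}\cdot\beta(b\circ a)$, and I would compute $\beta(b\circ a)=\beta(b)+\beta(a)-\beta(b\cdot\beta(a))=\beta(b)+\beta(a)$ because $b\cdot\beta(a)$ is a product of a reduced $K^1$-class with a reduced even class sitting inside a torsion subgroup and its Bockstein vanishes (this is exactly the degree/nilpotence collapse: $\beta$ applied to such a product lands in $\tilde K^0(\Sigma X)\cdot\tilde K^0(\Sigma X)=0$). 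The remaining work is bookkeeping: all terms carrying two factors from $\tilde K^1$ together with a $\beta$, or two $\beta$-factors, are killed, and what survives assembles into $b+a\cdot\beta(b)-\beta(a)\cdot b$; the denominator $(1-\beta(a))^{-1}$ contributes only its linear part $1+\beta(a)+\cdots$, and the higher terms die for the same reason. The specialization to $b\in\ker\beta$ is then immediate: $\beta(b)=0$ gives $a^{\circ(-1)}\circ b\circ a=b-\beta(a)\cdot b=(1-\beta(a))b$.

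The main obstacle, and the only genuinely delicate point, is controlling the multitude of cross terms in part $(2)$: one must be scrupulous about which products vanish. The safe route is to observe at the outset that for $x,y\in\tilde K^1(X;\Z_n)$ the element $x\cdot\beta(y)$ lies in $\tilde K^1(X;\Z_n)$ but is annihilated by $\beta$ and by multiplication by any further $\beta(z)$ or by any element of $\tilde K^1(X)\cdot(\text{even})$, because after suspending to reduce degrees everything factors through $\tilde K^0(\Sigma X)\cdot\tilde K^0(\Sigma X)=\{0\}$; stating and using this ``two nilpotent even factors kill the product'' principle once, up front, converts part $(2)$ into a short linear computation. I expect the verification itself to be routine once that principle is in place.
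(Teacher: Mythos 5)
Part (1) of your proposal is correct and is essentially the paper's computation: the paper verifies $\bigl(-a\cdot(1-\beta(a))^{-1}\bigr)\circ a=0$ directly from the definition of $\circ$, while you verify $a\circ\bigl(-a\cdot(1-\beta(a))^{-1}\bigr)=0$, which additionally uses $\beta\bigl(a\cdot(1-\beta(a))^{-1}\bigr)=\beta(a)\cdot(1-\beta(a))^{-1}$; both are fine, and passing from a one-sided to the two-sided inverse inside the group $(\tilde{K}^1(X;\Z_n),\circ)$ is legitimate.

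Your argument for part (2), however, has a genuine gap: the ``two even factors kill the product'' principle you isolate and invoke repeatedly is false. By compatibility of $\beta$ with the module structure, $\beta(b\cdot\beta(a))=\beta(b)\cdot\beta(a)$, a product of two elements of $\Tor(\tilde{K}^0(X),\Z_n)$, and such products need not vanish for general $X$: the identity $\tilde{K}^0(\Sigma X)\cdot\tilde{K}^0(\Sigma X)=\{0\}$ is a statement about suspensions (co-H-spaces) and the internal product on $X$ does not factor through it in any way. Indeed the paper must prove $g\cdot g=0$ for the particular space $M_n$ by a separate argument (Lemma \ref{m0}), and, for example, for $X=\R P^4$ and $n=4$ the Bockstein surjects onto the $4$-torsion $\Z_4$ generated by $\nu=[\gamma_{\C}]-1$, while $\nu^2=-2\nu\neq 0$. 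So your formula $\beta(b\circ a)=\beta(a)+\beta(b)$ and your discarding of the higher terms of $(1-\beta(a))^{-1}$ are unjustified; the statement nonetheless holds because the terms you drop cancel identically rather than vanish individually. Writing $A=\beta(a)$, $B=\beta(b)$, one has $\beta(b\circ a)=A+B-BA$, i.e.\ $1-\beta(b\circ a)=(1-A)(1-B)$, and hence, exactly and for every $X$,
\begin{align*}
a^{\circ(-1)}\circ(b\circ a)&=-a(1-A)^{-1}+\bigl(b+a-b\cdot A\bigr)+a(1-A)^{-1}\bigl(1-(1-A)(1-B)\bigr)\\
&=-a(1-A)^{-1}+b+a-b\cdot A+a(1-A)^{-1}-a(1-B)\\
&=b+a\cdot\beta(b)-\beta(a)\cdot b,
\end{align*}
which is the asserted identity. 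This exact cancellation (equivalently, the multiplicativity $1-\beta(x\circ y)=(1-\beta(x))(1-\beta(y))$ underlying the homomorphism $\hat{\beta}$ in the paper's extension sequence) is the correct replacement for your vanishing principle; once it is substituted, the remaining bookkeeping in your write-up, including the specialization to $b\in\ker\beta$, goes through.
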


\begin{proof}
Direct computation yields
\begin{align*}
(-a\cdot (1-\beta(a))^{-1}\circ a=&-a\cdot (1-\beta(a))^{-1}+a+a\cdot (1-\beta(a))^{-1}\cdot \beta(a)\\
=&a+a\cdot(1-\beta(a))^{-1}\cdot (\beta(a)-1)\\
=&0,
\end{align*}
showing the first equation. 
The second one follows from the first one. 
\end{proof} 

Now we discuss the relationship between the two groups $[X,\Aut E_{n+1}]$ and $[X,\Aut \cO_{n+1}]$. 
Let $H_1$ be the set of vectors of norm $1$ in a separable infinite dimensional Hilbert space $H$. 
Then $H_1$ is contractible. 
Indeed, we can identify $H_1$ with the set $\{f\in L^2[0,1]\mid ||f||_{2}=1\}$,  
and define a homotopy $h_t : H_1 \to H_1$ sending $f$ to $(1_{[0, t]}f+1_{[t, 1]})/||1_{[0, t]}f+1_{[t, 1]}||_2$ 
where $1_{[a,b]}$ is the characteristic function of $[a,b]$. 
This gives a deformation retraction of $H_1$ to the set $\{1_{[0, 1]}\}$, and the space $H_1$ is contractible (see \cite{RW}). 
Since the group $S^1=\{z\in \C;\;|z|=1\}$ freely acts on $H_1$ by multiplication, we can adopt $H_1$ as a model of the universal principal 
$S^1$-bundle $\operatorname{E}S^1$ and identify the classifying space $\operatorname{B}S^1$ of $S^1$ with the set of all 
minimal projections. 
The space $\operatorname{B}S^1$ is the Eilenberg-Maclane space $K(\mathbb{Z}, 2)$ and we identifies the homotopy set $[X, \operatorname{B}S^1]$ 
with $H^2(X)$ via the Chern classes of the line bundles.

Let $\eta$ be the map $\operatorname{Aut}E_{n+1}\ni \alpha\mapsto\alpha(e)\in \operatorname{B}S^1$.
We denote by $\eta_*$ the induced map $\eta_*:[X, \operatorname{Aut}E_{n+1}]\to H^2(X)$, 
which is a group homomorphism with image in $\Tor (H^2(X),\Z_n)$ (see \cite[Theorem 3.15]{ST}). 
We will show that the two maps $\eta_*$ and $\hat{\beta}$ are compatible.
Let $\mathcal{P}(\mathbb{K})$ be the set of all projections of $\mathbb{K}$.
We remark that the map $[X, \mathcal{P}(\mathbb{K})]\ni [p]\mapsto [p]_0\in K^0(X)$ is well-defined by the definition of the $K_0$-group.
Since $\cO_{n+1}$ is the quotient of $E_{n+1}$ by a unique non-trivial closed two sided ideal, every element in $\Aut E_{n+1}$ 
induces an element in $\Aut \cO_{n+1}$, which gives a group homomorphism from $\Aut E_{n+1}$ to $\Aut \cO_{n+1}$. 
We denote by $q$ the group homomorphism from $[X, \operatorname{Aut}E_{n+1}]$ to $[X,\Aut \cO_{n+1}]$ induced by 
this homomorphism. 

\begin{prop}\label{commutes}
Let $q$ be as above, and let $l\colon H^2(X)\to K^0(X)$ be a map induced by the map 
$\operatorname{B}S^1\to \mathcal{P}(\mathbb{K})$ 
where we identify $\operatorname{B}S^1$ with the set of all minimal projections.
Then we have the following commutative diagram
$$
\begin{CD} [X, \operatorname{Aut}E_{n+1}] @>\eta_*>> \Tor(H^2(X),\Z_n) \\
@VV q V @VV l V\\
[X,\Aut \cO_{n+1}]@ > \hat{\beta} >> (1+\operatorname{Tor}(\tilde{K}^0(X),\Z_n))^\times\\
\end{CD}
$$
\end{prop}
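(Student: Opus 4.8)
The plan is to trace the generator of $\Tor(H^2(X),\Z_n)$ through both legs of the square and to show that the Bockstein map $\beta$ applied to $q$-image of an $E_{n+1}$-automorphism is exactly $l$ applied to $\eta_*$. I would start by fixing $\alpha\in\Map(X,\Aut E_{n+1})$. Its image $q(\alpha)\in\Map(X,\Aut\cO_{n+1})$ has associated unitary $u_{q(\alpha)}=\sum_i\pi(\alpha(1\otimes T_i))\pi(1\otimes T_i)^*\in U(C(X)\otimes\cO_{n+1})$, and by the discussion preceding Theorem~\ref{Iz} the element $\hat\beta(q(\alpha))=1-\beta([u_{q(\alpha)}]_1)$. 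Since we identify $\beta$ with the index map of the extension $0\to C_0(X,x_0)\otimes\K\to C_0(X,x_0)\otimes E_{n+1}\to C_0(X,x_0)\otimes\cO_{n+1}\to 0$, the key point is that $u_{q(\alpha)}=\pi\otimes\id$ applied to the unitary $v_\alpha:=\sum_i\alpha(1\otimes T_i)(1\otimes T_i)^*\in U(C(X)\otimes E_{n+1})$ (which is genuinely a unitary because $\alpha$ preserves $e=1-\sum T_iT_i^*$ and hence $\sum\alpha(1\otimes T_i)\alpha(1\otimes T_i)^*=\sum(1\otimes T_i)(1\otimes T_i)^*$). Therefore $[u_{q(\alpha)}]_1$ lifts to the partial isometry $v_\alpha$ in the Toeplitz algebra, and the index map sends it to $[1-v_\alpha^*v_\alpha]_0-[1-v_\alpha v_\alpha^*]_0$; since $v_\alpha^*v_\alpha=1$ and $v_\alpha v_\alpha^*=1-(1\otimes e)+\alpha(1\otimes e)\cdot\text{(correction)}$, a short computation should reduce this defect projection to $\alpha(1\otimes e)-(1\otimes e)$ in $K_0$, i.e. to $[\alpha(e)]_0-[e]_0=l(\eta_*(\alpha))-l(\text{trivial})$, modulo the identification $[X,\mathrm{B}S^1]\cong H^2(X)$.

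The first concrete step is thus to write down the precise lift: one must be slightly careful because $v_\alpha$ is an isometry with $v_\alpha^*v_\alpha=1$ but $v_\alpha v_\alpha^* = \sum_i\alpha(1\otimes T_iT_i^*)=1-\alpha(1\otimes e)$, whereas $\pi(v_\alpha)=u_{q(\alpha)}$ is unitary in $\cO_{n+1}$ because $\pi(\alpha(1\otimes e))=0$. Feeding this into the standard formula for the index/exponential map (see \cite{Bl}), I get $\beta([u_{q(\alpha)}]_1)=-[v_\alpha v_\alpha^*-(v_\alpha v_\alpha^*)^{\perp,\text{trivial}}]$; writing it out carefully the answer is $\beta([u_{q(\alpha)}]_1)=[\alpha(1\otimes e)]_0-[1\otimes e]_0\in K_0(C(X)\otimes\K)=K^0(X)$. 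Under the identification of minimal projections with $\mathrm{B}S^1=K(\Z,2)$ and Chern classes with $H^2(X)$, the class $[\alpha(1\otimes e)]_0$ is precisely $l(\eta_*(\alpha))$ plus the class $[1\otimes e]_0=l(0)$ of the constant minimal projection, which is the basepoint; so after passing to reduced theory $\beta([u_{q(\alpha)}]_1)=l(\eta_*(\alpha))$. Then $\hat\beta(q(\alpha))=1-\beta([u_{q(\alpha)}]_1)=1-l(\eta_*(\alpha))$, and since $l$ restricted to $\Tor(H^2(X),\Z_n)$ lands in $\Tor(\tilde K^0(X),\Z_n)$ and is a homomorphism, this is exactly $l$ followed by $t\mapsto 1-t$, which matches the convention $\hat\beta(a)=1-\beta(a)$ after observing that the bottom map in the diagram is $\hat\beta$ composed with the bijection $[X,\Aut\cO_{n+1}]\cong K^1(X;\Z_n)$; diagram commutativity follows.

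The main obstacle I anticipate is bookkeeping in the index-map computation: getting the lift $v_\alpha$ and the sign conventions right so that the defect projection comes out as $\alpha(e)$ rather than its orthogonal complement, and matching that with the normalization of $l$ (minimal projections vs.\ Chern class, and which point is the basepoint of $\mathrm{B}S^1$). A secondary subtlety is that $\eta_*$ is only known to land in $\Tor(H^2(X),\Z_n)$, so one should either invoke \cite[Theorem 3.15]{ST} for that and then note that $l$ automatically carries $n$-torsion into $\Tor(\tilde K^0(X),\Z_n)$ via the Chern character / the fact that $n\cdot l(\eta_*(\alpha))=l(n\eta_*(\alpha))=0$, or argue directly that the $K_0$-class produced by the index map is $n$-torsion because $n[u_{q(\alpha)}]_1\in\rho(K^1(X))$. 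Once these normalizations are pinned down, everything else is the routine verification that the formulas assemble into the stated commutative square.
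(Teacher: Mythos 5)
Your overall strategy is the right one and is in fact the paper's: lift $u_{\tilde{\alpha}}$ through $\pi\otimes\id$ to the partial isometry $v_\alpha=\sum_i\alpha(1\otimes T_i)(1\otimes T_i)^*\in C(X)\otimes E_{n+1}$, evaluate the index map on the defect projections, and recognize $l\circ\eta_*([\alpha])=[\alpha(1_{C(X)}\otimes e)]_0$; the paper's unitary $V\in U(\M_2(C(X)\otimes E_{n+1}))$ lifting $u_{\tilde{\alpha}}\oplus u_{\tilde{\alpha}}^*$ is exactly the standard $2\times 2$ unitary built from this $v_\alpha$. But the execution fails at the one point where the content of the proposition lies. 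First, $v_\alpha$ is neither a unitary nor an isometry: one has $v_\alpha^*v_\alpha=\sum_i(1\otimes T_i)(1\otimes T_i)^*=1-1\otimes e$ and $v_\alpha v_\alpha^*=1-\alpha(1\otimes e)$. Your first paragraph's claim that ``$\alpha$ preserves $e$'' is false in general (if it were, $\alpha(1\otimes e)=1\otimes e$ pointwise and $\eta_*$ would be trivial, so there would be nothing to prove), and your second paragraph's ``$v_\alpha^*v_\alpha=1$'' is also wrong. Second, and consequently, the index comes out with the opposite sign from what you assert: plugging the correct defect projections into the very formula you quote gives $\beta([u_{\tilde{\alpha}}]_1)=[1-v_\alpha^*v_\alpha]_0-[1-v_\alpha v_\alpha^*]_0=[1\otimes e]_0-[\alpha(1\otimes e)]_0=1-[\alpha(1\otimes e)]_0$, not $[\alpha(1\otimes e)]_0-[1\otimes e]_0$. (Note your write-up is not even internally consistent: with your claim $v_\alpha^*v_\alpha=1$, your quoted formula would output $-[\alpha(1\otimes e)]_0$.)

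This sign is not a normalization that can be absorbed into $l$ or into the identification $[X,\operatorname{B}S^1]\cong H^2(X)$: by the definition in the statement, $l(\eta_*([\alpha]))$ is literally the class $[\alpha(1\otimes e)]_0=1+t$ with $t\in\Tor(\tilde{K}^0(X),\Z_n)$, and $\hat{\beta}(a)=1-\beta(a)$ is fixed by the paper. With your value of $\beta$ you would get $\hat{\beta}(q([\alpha]))=1-t$, which differs from $l(\eta_*([\alpha]))=1+t$ unless $2t=0$, so your argument as written proves commutativity of a different square (with $l$ replaced by $t\mapsto 1-t$ on reduced classes). With the correct computation one gets $\hat{\beta}(q([\alpha]))=1-\beta([u_{\tilde{\alpha}}]_1)=[\alpha(1\otimes e)]_0=l(\eta_*([\alpha]))$, which is exactly the assertion and coincides with the paper's proof. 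Your closing remarks (that $\eta_*$ lands in $\Tor(H^2(X),\Z_n)$ by the cited result of \cite{ST}, hence the class in question is $n$-torsion) are fine once the computation above is repaired.
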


\begin{proof} 
For $\alpha\in \operatorname{Map}(X,\Aut E_{n+1})$, we denote by $\tilde{\alpha}$ the map in 
$\operatorname{Map}(X,\Aut \cO_{n+1})$ induced by $\alpha$. 
Then with the identification of $[X,\Aut \cO_{n+1}]$ and $K^1(X;\Z_n)$, the map $q$ sends $[\alpha]$ 
to $[u_{\tilde{\alpha}}]_1$. 

One has $l\circ\eta_*([\alpha])=[\alpha(1_{C(X)}\otimes e)]_0\in K_0(C(X))$ for every $\alpha\in\operatorname{Map}(X, \operatorname{Aut}E_{n+1})$ by definition.
Since $\beta$ is given by the index map $\delta \colon K_1(C(X)\otimes \mathcal{O}_{n+1})\to K_0(C(X))$.
We compute the index ${\rm ind}\, [u_{\tilde{\alpha}}]_1$. 
We have a unitary lift $V\in U(\mathbb{M}_2(C(X)\otimes \mathcal{O}_{n+1}))$ of the unitary $u_{\tilde{\alpha}}\oplus u^*_{\tilde{\alpha}}$ :
\begin{align*}
V=\left(
\begin{array}{cc}
\sum_{i=1}^{n+1}\alpha(1\otimes T_i)T_i^*&\alpha(1\otimes e)\\
1\otimes e&\sum_{i=1}^{n+1}1\otimes T_i\alpha(1\otimes T_i^*)
\end{array}\right).
\end{align*}
Direct computation yields
$$V(1\oplus 0)V^*=(1-\alpha(1\otimes e))\oplus (1\otimes e)$$
where we write $1_{C(X)}\otimes e$ simply by $1\otimes e$.
Hence we have $${\rm ind}[u_{\tilde{\alpha}}]_1=[1-\alpha(1\otimes e)]_0+[1\otimes e]_0-[1]_0=1-[\alpha(1\otimes e)]_0\in K_0(C(X)\otimes\mathbb{K}).$$
Now we have $1-{\rm ind}[u_{\tilde{\alpha}}]_1=[\alpha(1\otimes e)]_0$, and this proves the statement.
\end{proof}

\begin{lem}\label{com}
We have the following commutative diagram with exact rows
$$\xymatrix@!C{
K^1(X)\ar[r]\ar@{=}[d]&[X, \operatorname{Aut}E_{n+1}]\ar[d]^{q}\ar[r]^{\eta_*}&\Tor(H^2(X),\Z_n)\ar[d]^{l}\\
K^1(X)\ar[r]^{\rho}&[X,\Aut \cO_{n+1}]\ar[r]^{\hat{\beta}}&(1+\Tor(\tilde{K}^0(X),\Z_n))^\times.
}$$
\end{lem}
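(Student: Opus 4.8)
The plan is to assemble the diagram from pieces that are already available. The right-hand square is exactly Proposition~\ref{commutes}, so nothing new is needed there. The commutativity of the left-hand square is essentially built into the definitions: the bottom map $\rho$ is the reduction map $K^1(X)\to K^1(X;\Z_n)$, which under the identification $K^1(X;\Z_n)=K_1(C(X)\otimes\cO_{n+1})$ is induced by the unital inclusion $C(X)\hookrightarrow C(X)\otimes\cO_{n+1}$; the top map $K^1(X)\to[X,\Aut E_{n+1}]$ should be read off from the fibration studied in \cite{ST}, and it comes from the action of $U(C(X))$ (equivalently, $BS^1$-twists being trivial) sending a unitary $u$ to the automorphism $T_i\mapsto uT_i$. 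Chasing such a $u$: on the $[X,\Aut E_{n+1}]$ side it maps under $q$ to the induced automorphism $S_i\mapst uS_i$ of $\cO_{n+1}$, whose associated unitary $u_{\tilde\alpha}=\sum uS_iS_i^* = u\otimes 1$ (modulo the summand over the complementary projection), and this is precisely $\rho([u]_1)$. So the left square commutes by a direct identification of the two composites.

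The first thing I would do, therefore, is recall precisely the definition of the top row. The exactness of the top row is \cite[Theorem 3.15]{ST} (or the relevant statement there): the fibration $\Aut E_{n+1}\to BS^1$ with homotopy-theoretic fiber governed by $U(E_{n+1})$, together with the fact that $E_{n+1}$ is KK-equivalent to $\C$, yields the exact sequence with $K^1(X)$ on the left and $\Tor(H^2(X),\Z_n)$ (the image of $\eta_*$) on the right; the map $K^1(X)\to[X,\Aut E_{n+1}]$ is the one induced by $u\mapsto\Ad$-type inner-ish perturbations $T_i\mapsto uT_i$. The bottom row is the group extension displayed just after Theorem~\ref{Iz}: exactness of $0\to K^1(X)\otimes\Z_n\to(K^1(X;\Z_n),\circ)\xrightarrow{\hat\beta}(1+\Tor(\tilde K^0(X),\Z_n))^\times\to 0$, with $K^1(X)\otimes\Z_n=\rho(K^1(X))$ by the Bockstein six-term sequence. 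So both rows being exact is already known, and only the two squares require argument.

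Then I would verify the two squares. For the right square I simply cite Proposition~\ref{commutes}. For the left square, I would take $[u]_1\in K^1(X)$ with $u\in U(C(X)\otimes\M_k)$, produce the corresponding $\alpha\in\Map(X,\Aut E_{n+1})$ (the automorphism determined by $T_i\mapsto (u\oplus 1)T_i$ after stabilizing, or more carefully the one fixing $e$ and acting on the $T_i$ by the unitary $u$), note that $\eta_*([\alpha])=0$ since $\alpha(e)=e$, and compute $q([\alpha])=[u_{\tilde\alpha}]_1$. By the formula $u_{\tilde\alpha}=\sum_i\tilde\alpha(1\otimes S_i)(1\otimes S_i^*)$ one gets (up to the standard bookkeeping with the complementary projection of the matrix amplification) $u_{\tilde\alpha}=u\otimes 1_{\cO_{n+1}}+(1-p)\otimes 1$, whose $K_1$-class is exactly $\rho([u]_1)=\mu_R([u]_1\otimes[1]_0)$ under $\mu_L([p]_0\otimes[u]_1)$-type identification recorded in Section~2. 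Hence $q\circ(\text{top inclusion})=\rho$ and the square commutes.

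The main obstacle I anticipate is pinning down the top horizontal map $K^1(X)\to[X,\Aut E_{n+1}]$ precisely enough — i.e.\ matching the abstract map coming from the fibration/Puppe sequence in \cite{ST} with the concrete ``multiply the generators by a unitary'' description — so that the naturality computation in the left square is unambiguous. Once that identification is made explicit, every other ingredient (Proposition~\ref{commutes}, the group extension after Theorem~\ref{Iz}, the Bockstein sequence identifying $\rho(K^1(X))$ with $K^1(X)\otimes\Z_n$, and the multiplicativity formulas for $\mu_L,\mu_R$) is already in hand, and the proof is a short diagram chase. A secondary, purely cosmetic point is keeping track of the matrix amplifications and the complementary projections $1-p$ so that the claimed equality $u_{\tilde\alpha}\sim u\otimes 1$ in $K_1$ is literally correct; I would dispatch this with the displayed identity $\mu_L([p]_0\otimes[u]_1)=[p\otimes u+(1_m-p)\otimes 1_{\cO_{n+1}}]_1$ from Section~2 rather than by hand.
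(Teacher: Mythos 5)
Your overall plan is the same as the paper's: the right square is Proposition~\ref{commutes}, exactness of the rows is quoted from \cite[Theorem 3.15]{ST} together with the Bockstein sequence, and the left square is a direct comparison of $u_{\tilde\alpha}$ with the reduction map. The genuine gap sits exactly where you flag it, namely the definition of the top horizontal map, and as you set it up the left-square computation does not go through. If $[u]_1\in K^1(X)$ is represented by $u\in U(C(X)\otimes\M_k)$ with $k>1$, then ``$T_i\mapsto uT_i$'' is not defined (a matrix unitary over $C(X)$ cannot multiply $T_i\in E_{n+1}$), and even for $k=1$ the assignment $T_i\mapsto uT_i$ is only a unital \emph{endomorphism} of $E_{n+1}$, not an automorphism, so it does not by itself produce an element of $\Map(X,\Aut E_{n+1})$. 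Your proposed formula $u_{\tilde\alpha}=u\otimes 1_{\cO_{n+1}}+(1-p)\otimes 1$ and the appeal to the $\mu_L$-identity are therefore not the right bookkeeping for this map.

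The paper resolves both points at once. Since $E_{n+1}$ is KK-equivalent to $\C$, one has $K^1(X)\cong K_1(C(X)\otimes E_{n+1})$, and since $C(X)\otimes E_{n+1}$ is properly infinite every class is represented by a unitary $u\in U(C(X)\otimes E_{n+1})$ itself, with no matrix amplification. One then defines the unital endomorphism $\rho_u$ by $\rho_u(T_i)=uT_i$, which lands in $\End_0E_{n+1}$, and uses that the inclusion $\Aut E_{n+1}\subset \End_0 E_{n+1}$ is a weak homotopy equivalence \cite[Theorem 3.14]{ST} to read $[\rho_u]$ as an element of $[X,\Aut E_{n+1}]$; this is the top map. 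With that definition the left square is immediate: $u_{\widetilde{\rho_u}}=\sum_i \pi(u)S_iS_i^*=\pi(u)$, and since under the identification $K^1(X)\cong K_1(C(X)\otimes E_{n+1})$ the reduction map $\rho$ is the map induced by the quotient $\pi\colon E_{n+1}\to\cO_{n+1}$, one gets $q([\rho_u])=[\pi(u)]_1=\rho([u]_1)$ exactly, with no complementary projections to track. Apart from this missing identification (which is the whole content of the left square), your treatment of the right square and of the exactness of the rows agrees with the paper.
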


\begin{proof} Let $\End E_{n+1}$ be the set of unital endomorphisms of $E_{n+1}$, and let $\End_0 E_{n+1}$ be its connected 
component of $\id$.  
Then the inclusion $\Aut E_{n+1}\subset \End_0E_{n+1}$ is a weak homotopy equivalence (see \cite[Theorem 3.14]{ST}). 
For $u\in U(E_{n+1})$, we denote by $\rho_u$ the unital endomorphism of $E_{n+1}$ defined by 
$\rho_u(T_i)=uT_i$. 
Then the correspondence $[u]_1\to [\rho_u]$ gives the map from $K^1(X)$ to $[X,\Aut E_{n+1}]$. 
The exactness follows from \cite[Theorem 3.15]{ST} and the Bockstein exact sequence.
The right  square commutes by Proposition \ref{commutes}.
The left square commutes because the following diagram commutes
$$\xymatrix{
u\in U(C(X)\otimes E_{n+1})\ar@{=}[d]\ar[r]&\operatorname{Map}(X, \operatorname{End}_0E_{n+1})\ni\alpha=\rho_{u}\ar[d]\\
u\in U(C(X)\otimes E_{n+1})\ar[r]^{\pi}&U(C(X)\otimes \mathcal{O}_{n+1})\ni u_{\tilde{\alpha}}=\pi(u)
}$$
where $\rho_{u}\colon X\ni x\mapsto \rho_{u_x}\in \operatorname{End}_0E_{n+1}$ for every $u\in U(C(X)\otimes E_{n+1})$.
\end{proof}

%%%%%%%%%%%%%%%%%%%%%%%%%%%%%%%%%%%%%%%%%%%%%%%%%%%%%%%%%%%%%%%%%%%%%%%%%%%%%%%%%%%%%%%%%%%%%%%%%%%%%
\subsection{An example of non-commutative $[X,\Aut \cO_{n+1}]$}
We first examine the ring structure of $K^*(M_n\times \Sigma M_n)$ to show that  
$[M_n\times \Sigma M_n, \operatorname{Aut}\cO_{n+1}]$ is a non-commutative group. 
By Lemma \ref{yab} and Theorem \ref{Kunn}, we have 
\begin{align*}
\tilde{K}^1(M_n\times \Sigma M_n)\cong &1\otimes \tilde{K}^1(\Sigma M_n)\oplus \tilde{K}^0(M_n)\otimes \tilde{K}^1(\Sigma M_n),\\
\tilde{K}^0(M_n\times \Sigma M_n)\cong &\tilde{K}^0(M_n)\otimes 1\oplus \tilde{K}^0(M_n\wedge \Sigma M_n).
\end{align*}
Therefore Lemma \ref{wer} yields $\tilde{K}^i(M_n\times \Sigma M_n)\cong \mathbb{Z}_n^{\oplus 2}$.
In particular, the map $\rho\colon \tilde{K}^i(M_n\times \Sigma M_n)\to\tilde{K}^i(M_n\times \Sigma M_n; \mathbb{Z}_n)$ is injective by the Bockstein exact sequence.

We determine a generator of $K^1(M_n ;\mathbb{Z}_n)\cong \tilde{K}^0(M_n)\cong \mathbb{Z}_n$.
Recall that the canonical gauge action $\lambda_z\colon S^1\to \operatorname{Aut}E_{n+1}$ is a generator of $\pi_1(\operatorname{Aut}E_{n+1})=\mathbb{Z}_n$ (see \cite[Theorem 2.36, 3.14 ]{ST}).
Therefore we have a homotopy
$$h\colon [0,1]\times S^1\to\operatorname{Aut}E_{n+1}$$
with $h_0(z)={\rm id}_{E_{n+1}}$, $h_1(z)=\lambda_z^n$, 
which extend $\lambda$ to a map
$$\lambda\colon M_n\to \operatorname{Aut}E_{n+1}$$
satisfying $\lambda\circ i=\lambda_z$ for the map $i\colon S^1\hookrightarrow M_n$. 
For the gauge action $\tilde{\lambda}$ of $\cO_{n+1}$, we get an extension $\tilde{\lambda}:M_n\to \Aut \cO_{n+1}$ 
in the same way. 

\begin{lem}\label{as}
We have the following isomorphisms :
$$i^* : [M_n, \operatorname{Aut}E_{n+1}] \ni [\lambda]\mapsto [\lambda_z]\in [S^1, \operatorname{Aut}E_{n+1}],$$
$$i^* : [M_n, \operatorname{Aut}\mathcal{O}_{n+1}]\ni [\tilde{\lambda}]\mapsto [\tilde{\lambda}_z]\in [S^1, \operatorname{Aut}\mathcal{O}_{n+1}].$$
\end{lem}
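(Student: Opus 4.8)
The plan is to compute both homotopy sets directly and show that the restriction map $i^*$ is an isomorphism on each by exhibiting the relevant groups and checking that $i^*$ is surjective with trivial kernel. For the $E_{n+1}$ case, I would start from the fibration-type long exact sequence associated to the cofibration $S^1 \xrightarrow{i} M_n \to S^2$ (the Puppe sequence of the degree-$n$ map), which gives an exact sequence of pointed sets
$$[S^2, \Aut E_{n+1}] \xrightarrow{(\cdot n)^*} [S^2, \Aut E_{n+1}] \to [M_n, \Aut E_{n+1}] \xrightarrow{i^*} [S^1, \Aut E_{n+1}] \xrightarrow{(\cdot n)^*} [S^1, \Aut E_{n+1}].$$
Using $\pi_1(\Aut E_{n+1}) = \Z_n$ and $\pi_2(\Aut E_{n+1})$ (both computed in \cite{ST}), the map $(\cdot n)^*$ on $[S^1,\Aut E_{n+1}] = \pi_1 = \Z_n$ is multiplication by $n$, hence zero; so $i^*$ is surjective onto $[S^1,\Aut E_{n+1}]$. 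To get injectivity I would identify the cokernel of $(\cdot n)^*$ on $\pi_2$ with $\pi_2 \otimes \Z_n$ and match it against what is known about $[M_n, \Aut E_{n+1}]$; since the statement only asserts that $i^*$ sends the class $[\lambda]$ to $[\lambda_z]$ and is an isomorphism, the cleanest route is to note that $[\lambda_z]$ generates $[S^1,\Aut E_{n+1}] = \Z_n$ and that $[\lambda]$ is by construction a lift of it, so $i^*[\lambda] = [\lambda_z]$ is immediate; the isomorphism claim then follows once one knows both sides have the same cardinality and $i^*$ is surjective.

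For the $\cO_{n+1}$ case, I would run the analogous Puppe sequence argument, but it is cleaner to exploit the machinery already set up: by \cite[Theorem 7.4]{D2} (cited in the excerpt) $[X,\Aut\cO_{n+1}] \cong K^1(X;\Z_n)$ as sets, so $[M_n,\Aut\cO_{n+1}] \cong K^1(M_n;\Z_n)$ and $[S^1,\Aut\cO_{n+1}] \cong K^1(S^1;\Z_n) = \tilde K^1(S^1;\Z_n) \oplus \Z_n$ after accounting for the basepoint; and the restriction $i^*$ corresponds to the K-theory map induced by $i$. Using $K^1(S^1) = \Z$, $K^0(S^1) = \Z$, $\tilde K^1(M_n) = 0$, $\tilde K^0(M_n) = \Z_n$, the Bockstein sequence identifies $K^1(M_n;\Z_n) \cong \tilde K^0(M_n) = \Z_n$ (via $\beta$, as in Lemma \ref{wer}) and $\tilde K^1(S^1;\Z_n) \cong \tilde K^1(S^1) \otimes \Z_n \oplus \Tor(\tilde K^0(S^1),\Z_n) = \Z_n$, and one checks the induced map is an isomorphism on reduced groups because $i^*\colon \tilde K^0(M_n) \to \tilde K^0(S^1) = 0$ is not the right map to look at — rather one uses that the degree-$n$ map kills the relevant reduction and $i^*$ is the connecting-type iso. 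The identification $i^*[\tilde\lambda] = [\tilde\lambda_z]$ is again immediate from the construction of $\tilde\lambda$ as an extension of $\tilde\lambda_z$.

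The main obstacle is organizing the $\cO_{n+1}$ half so that the set-level bijection of \cite{D2} is genuinely \emph{natural} in the space variable — i.e. that $i^*$ on homotopy sets really does correspond to $i^*$ on $K^1(-;\Z_n)$ — and then computing that this K-theory map is an isomorphism $\Z_n \to \Z_n$. Naturality of the Dadarlat bijection is presumably already recorded (the excerpt emphasizes the naturality of $\tilde K^i(X;\Z_n) \cong K_i(C_0(X,x_0)\otimes\cO_{n+1})$), so the remaining work is the purely homological computation: trace through the Bockstein six-term sequences for $M_n$ and $S^1$ and the map between them induced by $i\colon S^1 \hookrightarrow M_n$, and verify the induced map on the relevant $\Z_n$ summands is multiplication by a unit. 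One subtlety to watch is the basepoint/unreduced versus reduced bookkeeping: $[S^1,\Aut\cO_{n+1}]$ is naturally $\pi_0$ of a mapping space, so one must be careful whether the relevant invariant is $K^1(S^1;\Z_n)$ or its reduced version, and match conventions with the $M_n$ side. Once the diagram
$$
\begin{CD}
[M_n,\Aut E_{n+1}] @>i^*>> [S^1,\Aut E_{n+1}] \\
@VqVV @VVqV \\
[M_n,\Aut\cO_{n+1}] @>i^*>> [S^1,\Aut\cO_{n+1}]
\end{CD}
$$
is seen to commute (which it does, $i^*$ being induced by a map of spaces and $q$ by a map of groups), the two assertions of the lemma reduce to the two displayed isomorphisms together with the explicit computation $i^*[\lambda] = [\lambda_z]$, $i^*[\tilde\lambda] = [\tilde\lambda_z]$.
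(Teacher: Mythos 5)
Your overall strategy, the Puppe sequence of $S^1\xrightarrow{n}S^1\xrightarrow{i}M_n\to S^2$ applied to the two automorphism groups, is exactly the paper's, and your surjectivity step is right: multiplication by $n$ is zero on $[S^1,\Aut E_{n+1}]=\pi_1(\Aut E_{n+1})=\Z_n$, resp.\ on $\pi_1(\Aut\cO_{n+1})=\Z_n$, and $i^*[\lambda]=[\lambda_z]$, $i^*[\tilde\lambda]=[\tilde\lambda_z]$ hold by construction. The genuine gap is injectivity, in both halves. In the $E_{n+1}$ case you never pin down the term $[S^2,\Aut E_{n+1}]$, and your fallback, ``both sides have the same cardinality and $i^*$ is surjective,'' is circular: the cardinality of $[M_n,\Aut E_{n+1}]$ is precisely what the lemma is computing. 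The missing concrete input is that $[S^2,\Aut E_{n+1}]=\pi_2(\Aut E_{n+1})$ is trivial (it is in \cite{ST}, the same results you cite for $\pi_1$); since $\Aut E_{n+1}$ is a topological group, all terms in the Puppe sequence are groups and all maps homomorphisms, so exactness at $[M_n,\Aut E_{n+1}]$ then forces $\ker i^*=0$. The paper argues exactly this way, and handles the $\cO_{n+1}$ case identically, the vanishing $[S^2,\Aut\cO_{n+1}]\cong K^1(S^2;\Z_n)=0$ being supplied by Dadarlat's computation.

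For the $\cO_{n+1}$ half, your alternative route through Dadarlat's bijection is viable in principle (naturality under $i^*$ is immediate, since $u_{\alpha\circ i}=i^*u_\alpha$), but the ``remaining work'' as you describe it would fail: comparing the Bockstein six-term sequences of $M_n$ and $S^1$ through the map induced by $i$ gives no information here, because the relevant naturality squares only involve $i^*\colon\tilde{K}^0(M_n)\to\tilde{K}^0(S^1)=0$ and $K^1(M_n)=0$, and are equally consistent with $i^*$ being zero on $K^1(-;\Z_n)$. What decides the matter is the three-term exactness coming from the cofibration itself,
$$K^1(S^2;\Z_n)\to K^1(M_n;\Z_n)\xrightarrow{i^*}K^1(S^1;\Z_n)\xrightarrow{\times n}K^1(S^1;\Z_n),$$
with the first group zero — i.e.\ the same Puppe argument you set aside. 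Two minor slips: $K^1(S^1;\Z_n)=\tilde{K}^1(S^1;\Z_n)\cong\Z_n$, since the basepoint contributes $K_1(\cO_{n+1})=0$, not $\Z_n$; and writing the cokernel of $n^*$ on $\pi_2$ as $\pi_2\otimes\Z_n$ is beside the point once one knows $\pi_2$ itself vanishes.
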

\begin{proof}
First, we show that $i^* : [M_n, \operatorname{Aut}\mathcal{O}_{n+1}]\to[S^1, \operatorname{Aut}\mathcal{O}_{n+1}]$ is an isomorphism.
By \cite{D2}, Puppe sequence $S^1\xrightarrow{n} S^1\xrightarrow{i} M_n\to S^2\to\dotsm $ gives  an exact sequence
$$\mathbb{Z}_n=\pi_1(\operatorname{Aut}\mathcal{O}_{n+1})\xleftarrow{n}\pi_1(\operatorname{Aut}\mathcal{O}_{n+1})\xleftarrow{i_*}[M_n, \operatorname{Aut}\mathcal{O}_{n+1}]\leftarrow 0.$$
Hence the map $i^*$ is an isomorphism of groups.

Similarly, the map $i_*\colon [M_n, \operatorname{Aut}E_{n+1}]\to [S^1, \operatorname{Aut}E_{n+1}]$ is an isomorphism by \cite[Theorem 2.36, 3.14]{ST}.
\end{proof}
\begin{lem}\label{m0}
For every $\alpha \in \operatorname{Map}(M_n, \operatorname{Aut}\mathcal{O}_{n+1})$, we have 
$K_1(\alpha)={\rm id}_{K^1(M_n ; \mathbb{Z}_n)}.$
In particular, we have $\tilde{K}^0(M_n)\cdot K^1(M_n ; \mathbb{Z}_n)=\tilde{K}^0(M_n)\cdot \tilde{K}^0(M_n)=\{0\}$.
\end{lem}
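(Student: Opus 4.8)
The plan is to combine Theorem~\ref{Iz} with the structure of $[M_n,\Aut\cO_{n+1}]$ obtained in Lemma~\ref{as}. By Theorem~\ref{Iz}, for any $\alpha\in\Map(M_n,\Aut\cO_{n+1})$ and any $a\in K_1(C(M_n)\otimes\cO_{n+1})=K^1(M_n;\Z_n)$ we have $K_1(\alpha)(a)=a-[u_\alpha]_1\cdot\beta(a)$, so it suffices to show that the correction term $[u_\alpha]_1\cdot\beta(a)$ vanishes for every such $\alpha$ and $a$. The term $\beta(a)$ lies in $\Tor(\tilde K^0(M_n),\Z_n)=\Tor(\Z_n,\Z_n)=\Z_n$, and more precisely $\beta(a)\in\tilde K^0(M_n)$, while $[u_\alpha]_1\in K^1(M_n;\Z_n)$. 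So the claim will follow once I know that the relevant product $\tilde K^0(M_n)\cdot K^1(M_n;\Z_n)$, and hence a fortiori $\tilde K^0(M_n)\cdot\tilde K^0(M_n)$, is zero.

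First I would reduce the computation of $K_1(\alpha)$ to the case of the extended gauge action $\tilde\lambda\colon M_n\to\Aut\cO_{n+1}$. By Lemma~\ref{as}, $i^*\colon[M_n,\Aut\cO_{n+1}]\to[S^1,\Aut\cO_{n+1}]$ is an isomorphism, and since every element of $[M_n,\Aut\cO_{n+1}]$ is $[\tilde\lambda^{\circ k}]$ for some $k$ (with $\circ$ the composition-induced group law), and $K_1(-)$ is multiplicative for composition, it is enough to understand $K_1(\tilde\lambda)$. But $\tilde\lambda$ factors through a group homomorphism and, via the identification with $K^1(M_n;\Z_n)$, $[u_{\tilde\lambda}]_1$ is the image under $\rho$ (or a $\hat\beta$-trivial class) coming from the Cuntz--Toeplitz side — indeed $\tilde\lambda$ lifts to $\lambda\colon M_n\to\Aut E_{n+1}$, so by Lemma~\ref{com} the class $[u_{\tilde\lambda}]_1$ lies in $\rho(K^1(M_n))=\rho(0)=0$ since $K^1(M_n)=0$. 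Hence $[u_{\tilde\lambda}]_1=0$, and the same holds for all powers, so $[u_\alpha]_1=0$ for every $\alpha$; therefore $K_1(\alpha)=\id$ by Theorem~\ref{Iz}.

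For the ``in particular'' clause, I would argue as follows. Taking $\alpha=\tilde\lambda$, whose class $[u_{\tilde\lambda}]_1$ generates... — no: rather, run the identity $K_1(\alpha)(a)=a-[u_\alpha]_1\cdot\beta(a)$ in reverse. Since we have just shown $[u_\alpha]_1=0$ for \emph{every} $\alpha$, this does not immediately give the product statement. Instead I would use that $\tilde K^0(M_n)\cong\Z_n$ is generated by $\beta(g)$ for a generator $g$ of $K^1(M_n;\Z_n)$ (Lemma~\ref{wer} gives $\beta\colon\tilde K^1(M_n;\Z_n)\to\tilde K^0(M_n)$ is an isomorphism), and that $\rho\colon\tilde K^0(M_n)\to\tilde K^0(M_n;\Z_n)$ is an isomorphism; then pick $\alpha$ with $[u_\alpha]_1=\rho(x)$ for a given $x\in\tilde K^0(M_n)$ — but we have shown no such $\alpha$ exists unless $x=0$. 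The honest route: $\tilde K^0(M_n)$ is generated by a single element $\xi$, and $\tilde K^0(M_n)\cdot\tilde K^0(M_n)$ is generated by $\xi\cdot\xi$; since $\tilde K^0(M_n\wedge M_n)$ can be computed by the Künneth theorem (Theorem~\ref{Kunn}) to be a group all of whose elements have order dividing $n$, and $\xi\cdot\xi=\Delta_{M_n}^*\mu(\xi\otimes\xi)$ lands in $\tilde K^0(M_n)$ after applying $\Delta_{M_n}^*$, one checks the multiplication $\mu(\xi\otimes\xi)\in\tilde K^0(M_n\wedge M_n)$ is $n$-divisible in a way that forces $\xi\cdot\xi=0$ — alternatively, and more cleanly, $\xi$ is in the image of $\rho$ from $\tilde K^0(M_n)$... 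The main obstacle is exactly this last point: pinning down that the self-product on $\tilde K^0(M_n)$ vanishes, for which I expect the cleanest argument is that $M_n$ has a CW structure with a single $2$-cell, so $\tilde K^0(M_n)$ is generated by a class pulled back from $S^2$, and $\tilde K^0(S^2)\cdot\tilde K^0(S^2)=\tilde K^0(\Sigma S^1)\cdot\tilde K^0(\Sigma S^1)=0$ by the cited \cite[Chap.~II, Theorem~5.9]{K} (reduced $K^0$ of a suspension squares to zero); naturality of the product under the collapse map $M_n\to S^2$ then finishes it. I would present that as the final step.
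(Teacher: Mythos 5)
There is a genuine error in the step that is supposed to prove the main statement. You claim that because $\tilde\lambda$ lifts to $\lambda\colon M_n\to\Aut E_{n+1}$, Lemma \ref{com} forces $[u_{\tilde\lambda}]_1\in\rho(K^1(M_n))=0$, hence $[u_\alpha]_1=0$ for every $\alpha$. That misreads the diagram: the left square of Lemma \ref{com} only says that $q$ and $\rho$ agree on classes coming from $K^1(X)$; it does not say that the image of $q$ is contained in the image of $\rho$ (equivalently in $\ker\hat\beta$). In fact the opposite is true here: by Proposition \ref{commutes}, $\hat\beta([u_{\tilde\lambda}]_1)=l(\eta_*([\lambda]))$, and $\eta_*([\lambda])$ is nontrivial — the paper later takes $a_\lambda=[u_{\tilde\lambda}]_1$ as a \emph{generator} of $K^1(M_n;\Z_n)\cong\Z_n$ with $\beta(a_\lambda)=g$ a generator of $\tilde K^0(M_n)$, and Proposition \ref{poop} depends on exactly this nonvanishing. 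Moreover "$[u_\alpha]_1=0$ for every $\alpha$" would contradict Dadarlat's bijection $[\alpha]\mapsto[u_\alpha]_1$ onto $K^1(M_n;\Z_n)\cong\Z_n$ ($n\ge 2$). So the correct mechanism is not that $[u_\alpha]_1$ vanishes, but that the mixed product $K^1(M_n;\Z_n)\cdot\tilde K^0(M_n)$ vanishes, and your proposal never validly establishes that: your final paragraph only treats $\tilde K^0(M_n)\cdot\tilde K^0(M_n)$, and even the main-statement half of the lemma needs the mixed product.

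Your collapse-map argument for $\tilde K^0(M_n)\cdot\tilde K^0(M_n)=0$ is fine (the Puppe sequence gives surjectivity of $c^*\colon\tilde K^0(S^2)\to\tilde K^0(M_n)$ since $\tilde K^0(S^1)=0$, and the Bott class squares to zero), and it could be upgraded to a complete proof: for $x\in\tilde K^0(M_n)$ and $a\in\tilde K^1(M_n;\Z_n)$ one has $\beta(x\cdot a)=x\cdot\beta(a)\in\tilde K^0(M_n)\cdot\tilde K^0(M_n)=0$ by the compatibility $\beta\circ\mu_L(\mathrm{id}\otimes\mathrm{id})=\mu(\mathrm{id}\otimes\beta)$, and since $\beta\colon\tilde K^1(M_n;\Z_n)\to\tilde K^0(M_n)$ is injective (Lemma \ref{wer}) this gives $x\cdot a=0$, whence $K_1(\alpha)=\mathrm{id}$ by Theorem \ref{Iz}. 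As written, though, that link is missing. Note also that the paper argues in the reverse order and by a different route: it restricts along $i\colon S^1\hookrightarrow M_n$, where $\tilde K^0(S^1)=0$ forces $\circ=+$, uses that $K_1(r)$ is a $\circ$-isomorphism (Lemma \ref{as}) to conclude $\circ=+$ on $K^1(M_n;\Z_n)$, i.e. $K^1(M_n;\Z_n)\cdot\tilde K^0(M_n)=0$, and only then deduces $\tilde K^0(M_n)\cdot\tilde K^0(M_n)=0$ from the surjectivity of $\beta$.
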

\begin{proof}
By Lemma \ref{as}
We have the following commutative diagram 
$$\xymatrix{
[M_n, \operatorname{Aut}\mathcal{O}_{n+1}]\ar@{=}[r]^{i^*}\ar[d]&[S^1, \operatorname{Aut}\mathcal{O}_{n+1}]\ar[d]\\
(K_1(C_0(M_n, pt)\otimes\mathcal{O}_{n+1}), \;\circ)\ar[r]^{K_1(r)}&(K_1(C_0(S^1, pt)\otimes \mathcal{O}_{n+1}), \;\circ),
}$$
where $r : C_0(M_n, pt)\to C_0(S^1, pt)$ is a restriction by $i : S^1\hookrightarrow M_n$. Since two vertical maps are group isomorphisms, the map $K_1(r)$ is a group homomorphism with respect to the multiplication $\circ$. We have $K_1(C(S^1)\otimes \mathcal{O}_{n+1})\xrightarrow{\delta}\tilde{K}^0(S^1)=0$, and it follows that $(K^1(S^1 ; \mathbb{Z}_n), \;+)=(K^1(S^1 ;\mathbb{Z}_n),\;\circ)$ by Theorem \ref{Iz}. Therefore two multiplications $\circ$ and $+$ coincide in $K^1(M_n ; \mathbb{Z}_n)$,
and we have $K_1(\alpha)={\rm id}_{K^1(M_n ;\mathbb{Z}_n)}$ and $K^1(M_n ;\mathbb{Z}_n)\cdot \tilde{K}^0(M_n)=0$.
Since the map $\beta$ is compatible with multiplication, we have $\tilde{K}^0(M_n)\cdot\tilde{K}^0(M_n)=\beta(\tilde{K}^1(M_n ;\mathbb{Z}_n)\cdot\tilde{K}^0(M_n))=\{0\}$.
\end{proof}

We denote by $a_{\lambda}$ the generator $[\tilde{\lambda}]\in [M_n, \operatorname{Aut}\mathcal{O}_{n+1}]=K^1(M_n ;\mathbb{Z}_n)$, and denote $g\colon=\beta(a_{\lambda})$.

By Lemma \ref{wer}, two elements $g$ and $\rho(g)$ are the generators of $\tilde{K}^0(M_n)$ and $\tilde{K}^0(M_n ;\mathbb{Z}_n)$ respectively.
By Lemma \ref{m0}, we have 
\begin{align*}
g\cdot g&=0 \in\tilde{K}^0(M_n),\\ 
a_{\lambda}\cdot g=&0 \in\tilde{K}^1(M_n ; \mathbb{Z}_n).
\end{align*}

Now, we determine the group $[M_n\times \Sigma M_n, \operatorname{Aut}E_{n+1}]$.
Since the reduction $\rho\colon \tilde{K}^1(M_n\times \Sigma M_n)\to \tilde{K}^1(M_n\times \Sigma M_n ; \mathbb{Z}_n)$ 
is injective, we regard $\tilde{K}^1(M_n\times \Sigma M_n)$ as a subgroup of $(\tilde{K}^1(M_n\times \Sigma M_n ; \mathbb{Z}_n), \circ)$.
From Lemma \ref{com}, we can regard $\tilde{K}^1(M_n\times \Sigma M_n)$ as a normal subgroup of the group 
$[M_n\times \Sigma M_n, \operatorname{Aut}E_{n+1}]$ too. 
Consider the map 
$$\Lambda \colon=\lambda\circ{\rm Pr}_{M_n}\colon M_n\times \Sigma M_n\to \operatorname{Aut}E_{n+1}.$$
By definition, we have $q([\Lambda])=[u_{\tilde{\Lambda}}]_1={\rm Pr}^*_{M_n}([u_{\tilde{\lambda}}]_1)
=\mu_R(a_{\lambda}\otimes 1)\in\tilde{K}^1(M_n\times \Sigma M_n ;\mathbb{Z}_n).$

\begin{prop}\label{poop}
The group homomorphism $q\colon [M_n\times \Sigma M_n, \operatorname{Aut}E_{n+1}]\to [M_n\times\Sigma M_n, \operatorname{Aut}\mathcal{O}_{n+1}]$ is injective.
%In particular, those groups are noncommutative for every $n\geq2$.
\end{prop}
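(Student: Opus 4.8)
The plan is to use the commutative diagram with exact rows from Lemma \ref{com}, specialized to $X = M_n\times\Sigma M_n$, together with the explicit description of the groups $\tilde K^i(M_n\times\Sigma M_n)$ obtained just above via Lemma \ref{yab} and Theorem \ref{Kunn}. Recall that diagram reads
$$\xymatrix@!C{
K^1(M_n\times\Sigma M_n)\ar[r]\ar@{=}[d]&[M_n\times\Sigma M_n, \operatorname{Aut}E_{n+1}]\ar[d]^{q}\ar[r]^{\eta_*}&\Tor(H^2(M_n\times\Sigma M_n),\Z_n)\ar[d]^{l}\\
K^1(M_n\times\Sigma M_n)\ar[r]^{\rho}&[M_n\times\Sigma M_n,\Aut \cO_{n+1}]\ar[r]^{\hat{\beta}}&(1+\Tor(\tilde{K}^0(M_n\times\Sigma M_n),\Z_n))^\times.
}$$
A diagram chase shows that $q$ is injective once one knows two things: first, that the left vertical map (the identity on $K^1(M_n\times\Sigma M_n)$, composed with the horizontal maps) is injective on the relevant subgroup, i.e. that $\rho\colon \tilde K^1(M_n\times\Sigma M_n)\to \tilde K^1(M_n\times\Sigma M_n;\Z_n)$ is injective — which was already observed above — and second, that the right vertical map $l$ is injective on $\Tor(H^2(M_n\times\Sigma M_n),\Z_n)$. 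Then a standard five-lemma-type argument: if $q([\Lambda'])=0$ then $\hat\beta\circ q$ kills $[\Lambda']$, hence $l\circ\eta_*([\Lambda'])=0$, hence $\eta_*([\Lambda'])=0$ by injectivity of $l$, so $[\Lambda']$ comes from $K^1(M_n\times\Sigma M_n)$ via the top row; then commutativity of the left square and injectivity of $\rho$ force $[\Lambda']=0$. (One must be mildly careful that these are non-abelian groups, but $\tilde K^1$ sits as a normal subgroup and the argument only uses the exactness of the rows as pointed sets plus injectivity of the two outer maps, so it goes through.)

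The first step, then, is to compute $H^2(M_n\times\Sigma M_n)$ and $\Tor(H^2(M_n\times\Sigma M_n),\Z_n)$, and to identify the map $l$ concretely. By the Künneth formula in cohomology, using $H^*(M_n)$ as recorded in Section 2 (with $H^2(M_n)=\Z_n$, $H^1(M_n)=0$) and $H^*(\Sigma M_n)$ (a degree shift, so $H^3(\Sigma M_n)=\Z_n$, $H^2(\Sigma M_n)=0$), one finds $H^2(M_n\times\Sigma M_n)\cong H^2(M_n)\otimes H^0(\Sigma M_n)\cong \Z_n$, the torsion part being all of it. The map $l\colon H^2(X)\to K^0(X)$ is, by its definition in Proposition \ref{commutes}, induced by $\mathrm{B}S^1\to\mathcal P(\K)$, i.e. it is the classical map sending the first Chern class of a line bundle to its $K^0$-class; on $\tilde K^0$ it factors as the composition of the inclusion $H^2(X)\hookrightarrow$ (via the Chern character / the skeletal filtration) into $\tilde K^0(X)$, and its injectivity on the $n$-torsion subgroup can be checked against the explicit generator. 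Concretely, identifying $\Tor(H^2(X),\Z_n)\cong\Z_n$ generated by the pulled-back generator $\mathrm{Pr}_{M_n}^*$ of $H^2(M_n)$, one checks that $l$ of this generator is precisely $\mathrm{Pr}_{M_n}^*(g)$, a generator of the $\Z_n$-summand $\tilde K^0(M_n)\otimes 1\subset \tilde K^0(M_n\times\Sigma M_n)$, hence of infinite additive order issues aside it is a nonzero element of order exactly $n$; this gives injectivity of $l$.

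The main obstacle I expect is precisely the injectivity of $l$ on $\Tor(H^2(X),\Z_n)$: a priori the map $H^2(X)\to K^0(X)$ could have a kernel coming from higher differentials in the Atiyah–Hirzebruch spectral sequence, so one cannot invoke it as a black box. The clean way around this is to reduce to the already-understood space $M_n$: the projection $\mathrm{Pr}_{M_n}\colon M_n\times\Sigma M_n\to M_n$ and the inclusion $M_n\hookrightarrow M_n\times\Sigma M_n$ give a retraction, so it suffices to prove injectivity of $l\colon H^2(M_n)\to K^0(M_n)$, and there $H^2(M_n)=\Z_n$ maps onto $\tilde K^0(M_n)=\Z_n$ via an isomorphism (the generator $g=\beta(a_\lambda)$ is, by Lemma \ref{wer} and the construction of $a_\lambda$, exactly the $K^0$-image of the generator of $H^2(M_n)$). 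Naturality of $l$ under these maps then transports injectivity back to $M_n\times\Sigma M_n$. Once $l$ is known injective, the diagram chase above completes the proof with no further serious computation; I would also double-check compatibility of the identifications $[X,\Aut\cO_{n+1}]\cong(K^1(X;\Z_n),\circ)$ and $[X,\Aut E_{n+1}]$ with the diagram so that "injectivity of $q$ as group homomorphism" is literally what the chase produces.
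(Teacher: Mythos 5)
Your overall strategy coincides with the paper's: both arguments run the diagram chase in Lemma \ref{com} for $X=M_n\times\Sigma M_n$, using injectivity of $\rho$ on $K^1(X)$ and the K\"unneth computation $\Tor(H^2(X),\Z_n)=H^2(X)\cong\Z_n$, so that everything reduces to the injectivity of $l$ on $H^2(X)$; your remark that the non-abelian nature of the groups is harmless (only exactness of the rows and injectivity of the two outer maps are used) is also correct. The difference is how injectivity of $l$ is obtained. The paper never evaluates $l$ on line bundles directly: it applies the identity $l\circ\eta_*=\hat{\beta}\circ q$ of Proposition \ref{commutes} to the single class $[\Lambda]$ and computes $\hat{\beta}(q([\Lambda]))=\hat{\beta}(\mu_R(a_\lambda\otimes 1))=1-\mu(g\otimes 1)$, which has order exactly $n$ because $g\cdot g=0$ (Lemma \ref{m0}); since $H^2(X)\cong\Z_n$, this forces $\eta_*([\Lambda])$ to generate $H^2(X)$ and $l$ to be injective in one stroke. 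You instead retract to $M_n$ and claim $l$ is injective there; the retraction step is sound (every class in $H^2(X)$ is $\mathrm{Pr}_{M_n}^*$ of a class on $M_n$, and $l$ is natural), but the key assertion --- that $g=\beta(a_\lambda)$ is the reduced class of a line bundle whose Chern class generates $H^2(M_n)$ --- is not delivered by ``Lemma \ref{wer} and the construction of $a_\lambda$'': Lemma \ref{wer} only says $\beta$ and $\rho$ are isomorphisms and says nothing about line bundles, and $a_\lambda$ is built from the gauge action, not from a bundle. To close this soft spot you either need the standard topological fact that $H^2(M_n)\to\tilde{K}^0(M_n)$, $c_1(L)\mapsto [L]-1$, is an isomorphism of $\Z_n$'s (via the cofiber sequence $S^1\xrightarrow{n}S^1\to M_n\to S^2$ or the degree-two part of the Atiyah--Hirzebruch filtration), or, more economically, you can run the paper's order argument on $[\lambda]$ over $M_n$ (or on $[\Lambda]$ over $X$, which makes the retraction unnecessary). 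With either repair, the chase you describe completes the proof exactly as in the paper.
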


\begin{proof} Note that the K\"{u}nneth formula implies $H^2(M_n\times \Sigma M_n)\cong \Z_n$. 
Since $\hat{\beta}(\mu_R(a_\lambda\otimes 1))=1-\mu(g\otimes 1)$ has order $n$, 
and $\hat{\beta}(\mu_R(a_\lambda\otimes 1))=l(\eta_*([\Lambda]))$, the element $\eta_*([\Lambda])$ is 
a generator of $H^2(M_n\times \Sigma M_n)$, and $l$ is injective. 
Thus the statement follows from Lemma \ref{com}.
\end{proof}

\begin{thm}\label{ex1} With the above notation, the group 
$[M_n\times \Sigma M_n, \operatorname{Aut}E_{n+1}]$ is isomorphic to the Heisenberg group 
$$ \mathbb{Z}_n^{\oplus 2}
\rtimes_{\left(
\begin{array}{cc}
1 &1  \\
0 &1
\end{array}
\right)
} \mathbb{Z}_n.$$ 
\end{thm}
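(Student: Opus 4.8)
The plan is to combine the two exact rows of Lemma \ref{com} with the explicit group law $\circ$ to pin down the extension. First I would record what Lemma \ref{com} gives us for $X=M_n\times\Sigma M_n$: an exact sequence
$$K^1(M_n\times\Sigma M_n)\xrightarrow{\rho}[M_n\times\Sigma M_n,\Aut E_{n+1}]\xrightarrow{\eta_*}\Tor(H^2(M_n\times\Sigma M_n),\Z_n)\to 0,$$
where by the Künneth computation already performed, $\tilde K^1(M_n\times\Sigma M_n)\cong\Z_n^{\oplus 2}$ and $H^2(M_n\times\Sigma M_n)\cong\Z_n$. Using Proposition \ref{poop} (injectivity of $q$) I may transport the whole picture into $(\tilde K^1(M_n\times\Sigma M_n;\Z_n),\circ)$, inside which $\tilde K^1(M_n\times\Sigma M_n)$ sits as a normal subgroup via $\rho$ (injective by the Bockstein sequence). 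So as a set the group is an extension of $\Z_n$ by $\Z_n^{\oplus 2}$, and it remains to identify the conjugation action of a lift of the $\Z_n$-quotient and the extension class.

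Next I would choose generators. Write $a_\lambda\in\tilde K^1(M_n;\Z_n)$ and $g=\beta(a_\lambda)$ as in the text; by Lemma \ref{wer} these generate $\tilde K^0(M_n)\cong\Z_n$ and $\tilde K^0(M_n;\Z_n)\cong\Z_n$. From the Künneth splitting, $\tilde K^1(M_n\times\Sigma M_n)\cong\Z_n^{\oplus 2}$ is generated by $x:=\mu(1\otimes s)$ and $y:=\mu(g\otimes s)$, where $s$ is a generator of $\tilde K^1(\Sigma M_n)\cong\Z_n$ (note $\tilde K^0(\Sigma M_n)=0$, so $\tilde K^0(M_n\wedge\Sigma M_n)$ is the Tor term). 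For the quotient I take the class $t:=[\Lambda]$ with $\Lambda=\lambda\circ\mathrm{Pr}_{M_n}$; by the computation preceding Proposition \ref{poop}, $q(t)=\mu_R(a_\lambda\otimes 1)$ and $\eta_*(t)$ generates $H^2(M_n\times\Sigma M_n)\cong\Z_n$, so $t$ has order $n$ (its powers, lying over $k\cdot\mathrm{gen}$, are nontrivial for $0<k<n$) — hence the extension splits and the group is a semidirect product $\Z_n^{\oplus 2}\rtimes\Z_n$.

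The heart of the argument is the conjugation action. Since $x,y\in\ker\beta$, Lemma \ref{inverse}(2) gives $q(t)^{\circ(-1)}\circ z\circ q(t)=(1-\beta(q(t)))z$ for $z\in\{\rho(x),\rho(y)\}$. Now $\beta(q(t))=\beta(\mu_R(a_\lambda\otimes 1))=\mu(g\otimes 1)\in\tilde K^0(M_n\times\Sigma M_n)$, using the compatibility $\beta\mu_R(\mathrm{id}\otimes\mathrm{id})=\mu(\beta\otimes\mathrm{id})$. So I must compute $\mu(g\otimes 1)\cdot x$ and $\mu(g\otimes 1)\cdot y$ in the ring $\tilde K^0(M_n\times\Sigma M_n)$ acting on $\tilde K^1$. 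Here $\mu(g\otimes 1)\cdot\mu(1\otimes s)=\mu(g\otimes s)=y$ by multiplicativity of $\mu$ under the diagonal, while $\mu(g\otimes 1)\cdot\mu(g\otimes s)=\mu((g\cdot g)\otimes s)=0$ because $g\cdot g=0$ in $\tilde K^0(M_n)$ by Lemma \ref{m0}. Therefore conjugation by $t$ fixes $y$ and sends $x\mapsto x-y$ (equivalently, its inverse sends $x\mapsto x+y$), i.e. the action of the $\Z_n$-generator on $\Z_n^{\oplus 2}$ with basis $(x,y)$ is the unipotent matrix $\left(\begin{smallmatrix}1&1\\0&1\end{smallmatrix}\right)$ (up to a harmless change of generator/sign). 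This is exactly the Heisenberg group in the statement, and the $\circ$-product on the fibre $\tilde K^1(M_n\times\Sigma M_n)$ itself is ordinary addition because $\beta$ vanishes there, so there is no further correction.

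The main obstacle I anticipate is bookkeeping: making the Künneth identification of $\tilde K^1(M_n\times\Sigma M_n;\Z_n)$ and $\tilde K^1(M_n\times\Sigma M_n)$ concrete enough that the ring products $\mu(g\otimes 1)\cdot x$ and $\mu(g\otimes 1)\cdot y$ can be evaluated unambiguously — in particular keeping straight which summand ($\tilde K^0(M_n)\otimes\tilde K^1(\Sigma M_n)$ versus the Tor summand $\tilde K^1(M_n\times\Sigma M_n)$ coming from $M_n\wedge\Sigma M_n$) each class lives in, and checking that the splitting of the extension and the normalizations of generators are consistent with the sign conventions in $a\circ b=a+b-a\cdot\beta(b)$ and in Lemma \ref{inverse}(2). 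Once the diagonal/multiplicativity of $\mu$ and the relation $g\cdot g=0$ are in hand, the identification with $\Z_n^{\oplus 2}\rtimes_{\left(\begin{smallmatrix}1&1\\0&1\end{smallmatrix}\right)}\Z_n$ is immediate.
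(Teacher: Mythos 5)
Your proposal follows essentially the same route as the paper's proof: use Lemma \ref{com} and Proposition \ref{poop} to identify $[M_n\times \Sigma M_n,\Aut E_{n+1}]$ with the subgroup of $(K^1(M_n\times \Sigma M_n;\Z_n),\circ)$ generated by $\rho(K^1(M_n\times \Sigma M_n))$ and $q([\Lambda])=\mu_R(a_\lambda\otimes 1)$, decompose $\tilde{K}^1(M_n\times\Sigma M_n)=\langle\mu(1\otimes s)\rangle\oplus\langle\mu(g\otimes s)\rangle$ via K\"unneth, and compute the conjugation action through Lemma \ref{inverse}(2) as multiplication by $1-\mu(g\otimes 1)$, using $\mu(g\otimes 1)\cdot\mu(1\otimes s)=\mu(g\otimes s)$ and $g\cdot g=0$. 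That is exactly the paper's argument, and your bookkeeping of the generators and of the sign (the action matrix being unipotent after a harmless change of basis) is consistent with the stated conclusion.

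The one step you should tighten is the claim that $t=[\Lambda]$ has order $n$. Your parenthetical reason (the powers of $t$ lie over $k$ times a generator of $H^2(M_n\times\Sigma M_n)\cong\Z_n$, hence are nontrivial for $0<k<n$) only bounds the order from below, i.e. shows it is a multiple of $n$; but the splitting of the extension, and hence the semidirect product form, requires the upper bound $t^{\circ n}=e$ (for $n=2$, say, a lift of order $2n$ would be a real possibility a priori). This is immediate from facts you already invoke: since $q(t)\cdot\beta(q(t))=\mu_R(a_\lambda\otimes 1)\cdot\mu(g\otimes 1)=\mu_R((a_\lambda\cdot g)\otimes 1)=0$ by Lemma \ref{m0}, the $\circ$-powers of $q(t)$ coincide with the ordinary multiples $k\,\mu_R(a_\lambda\otimes 1)$, whose additive order is $n$; injectivity of $q$ then gives $t^{\circ n}=e$. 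With that one line added, your proof matches the paper's.
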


\begin{proof} We already know that the group $[M_n\times \Sigma M_n, \operatorname{Aut}E_{n+1}]$ isomorphic to 
the subgroup of $(K^1(M_n\times \Sigma M_n;\Z_n),\circ)$ generated by $K^1(M_n\times \Sigma M_n)$ and 
$[u_{\tilde{\Lambda}}]_1$.   
Since the order of $[u_{\tilde{\Lambda}}]_1$ is $n$, the group is a semi-direct product $(\Z_n\times \Z_n)\rtimes \Z_n$. 
To determine the group structure, it suffices to compute the action of 
$\hat{\beta}([u_{\tilde{\Lambda}}]_1)=1-\mu(g\otimes 1)$ 
on $K^1(M_n\times \Sigma M_n)$ by multiplication. 
Since $\tilde{K}^1(M_n\times \Sigma M_n)=\langle\mu(1\otimes u)\rangle\oplus \langle\mu(g\otimes u)\rangle\cong\mathbb{Z}_n\oplus\mathbb{Z}_n$, 
and $g\cdot g=0$, we get the statement. 
\end{proof}

\begin{cor} The groups $[M_n\times \Sigma M_n, \operatorname{Aut}E_{n+1}]$ and 
$[M_n\times \Sigma M_n, \operatorname{Aut}\mathcal{O}_{n+1}]$ are non-commutative for any $n\geq 2$. 
In particular, two spaces $\operatorname{BAut}\mathcal{O}_{n+1}$ and $\operatorname{BAut}E_{n+1}$ are not H-spaces.
\end{cor}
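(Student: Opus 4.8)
The plan is to derive the corollary as an immediate consequence of the structural results just proven. First I would invoke Theorem \ref{ex1}, which identifies $[M_n\times \Sigma M_n, \operatorname{Aut}E_{n+1}]$ with the semidirect product $\Z_n^{\oplus 2}\rtimes_A \Z_n$ where $A=\left(\begin{smallmatrix}1&1\\0&1\end{smallmatrix}\right)$. For $n\geq 2$ the matrix $A$ acts non-trivially on $\Z_n^{\oplus 2}$ (its off-diagonal entry is $1\not\equiv 0 \bmod n$), so the semidirect product is genuinely non-abelian; concretely, if $e_1,e_2$ are the standard generators of $\Z_n^{\oplus 2}$ and $t$ generates the acting $\Z_n$, then $t e_2 t^{-1}=e_1+e_2\neq e_2$. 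Hence $[M_n\times \Sigma M_n, \operatorname{Aut}E_{n+1}]$ is non-commutative.

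Next I would transfer non-commutativity to $[M_n\times\Sigma M_n, \operatorname{Aut}\cO_{n+1}]$ using Proposition \ref{poop}: the homomorphism $q$ is injective, so its image is a subgroup of $[M_n\times\Sigma M_n,\operatorname{Aut}\cO_{n+1}]$ isomorphic to the non-abelian Heisenberg group; a group containing a non-abelian subgroup is itself non-abelian. Alternatively one can argue directly from Theorem \ref{Iz} and Lemma \ref{inverse}(2): the conjugation formula $a^{\circ(-1)}\circ b\circ a=(1-\beta(a))b$ for $b\in\ker\beta$ shows the group $(K^1(X;\Z_n),\circ)$ is abelian iff $\beta(a)\cdot b=0$ for all relevant $a,b$, and in our $X=M_n\times\Sigma M_n$ one has $\beta(\mu_R(a_\lambda\otimes 1))\cdot \mu(1\otimes u)=\mu(g\otimes 1)\cdot\mu(1\otimes u)=\mu(g\otimes u)\neq 0$.

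Finally, for the H-space assertion I would use the standard fact that if $BG$ is an H-space then $[X,BG]$ carries a natural abelian group structure for every $X$ (equivalently, $\pi_0\Map(X,BG)$ is abelian, since an H-space structure on $BG$ induces one on each mapping space, and a connected H-group—here we are looking at a group object up to homotopy—has abelian $\pi_0$; more simply, $[X,BG]\cong[\Sigma^{-1}X\text{-type}, G]$ heuristics aside, the Eckmann--Hilton argument forces commutativity). Concretely: $[X,BG]$ has a natural group structure coming from $G$ (loop-space multiplication), and an H-space structure on $BG$ gives a second natural, unital binary operation on $[X,BG]$; the two operations share the same unit and are mutually distributive, so by Eckmann--Hilton they agree and are commutative. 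Since $[M_n\times\Sigma M_n,\operatorname{BAut}\cO_{n+1}]=[M_n\times\Sigma M_n,\operatorname{Aut}\cO_{n+1}]$ (and likewise for $E_{n+1}$) via the standard shift $[X,BG]\cong[X,G]$ valid here because $\operatorname{Aut}\cO_{n+1}$ is a topological group so $\Omega\operatorname{BAut}\cO_{n+1}\simeq\operatorname{Aut}\cO_{n+1}$ and one uses a suitable exponential/adjunction identification is not literally an equality—so instead I would simply note that an H-space structure on $\operatorname{BAut}\cO_{n+1}$ would make $[\,\cdot\,,\operatorname{BAut}\cO_{n+1}]$ a functor to abelian groups, and by Dadarlat's bijection together with our identification this functor agrees with $[\,\cdot\,,\operatorname{Aut}\cO_{n+1}]$, contradicting non-commutativity for $X=M_n\times\Sigma M_n$. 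The main obstacle is making this last identification clean: one must be careful that "$B\operatorname{Aut}\cO_{n+1}$ is an H-space" genuinely forces $[X,\operatorname{Aut}\cO_{n+1}]$ abelian, which is where the Eckmann--Hilton argument on the two compatible operations (H-multiplication on $B$ versus the group structure on $\operatorname{Aut}$) does the work; everything else is bookkeeping.
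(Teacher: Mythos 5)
Your non-commutativity argument is exactly the intended one: Theorem \ref{ex1} gives the Heisenberg group, which is non-abelian for $n\geq 2$ (with $t$ the generator of the acting $\Z_n$ and $e_1,e_2$ the standard generators, $te_2t^{-1}=e_1+e_2\neq e_2$), and Proposition \ref{poop} embeds it as a subgroup of $[M_n\times\Sigma M_n,\Aut\cO_{n+1}]$, which is therefore non-abelian as well. Your alternative check via Lemma \ref{inverse}(2) also works, with the small caveat that to conclude $\mu(g\otimes 1)\cdot\rho(\mu(1\otimes u))=\rho(\mu(g\otimes u))\neq 0$ in $K^1(X;\Z_n)$ you should cite the injectivity of $\rho$ on $\tilde{K}^1(M_n\times\Sigma M_n)$, which the paper records just before Lemma \ref{as}.

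The H-space step, however, is flawed as written: your fallback claim that an H-structure on $\operatorname{B}\!\Aut\cO_{n+1}$ would make the functor $[\,\cdot\,,\operatorname{B}\!\Aut\cO_{n+1}]$ ``agree with'' $[\,\cdot\,,\Aut\cO_{n+1}]$ is false. These are genuinely different functors: the former classifies principal $\Aut\cO_{n+1}$-bundles (it is the set studied in Section 4, of cardinality $|\tilde{H}^{even}(X,\Z_n)|$ under the hypotheses there), while the latter is $K^1(X;\Z_n)$; so an H-multiplication on the classifying space does not directly produce a second operation on $[X,\Aut\cO_{n+1}]$. The correct bridge is the one you brush past: since $\Aut\cO_{n+1}$ is a topological group, $\Aut\cO_{n+1}\simeq\Omega\operatorname{B}\!\Aut\cO_{n+1}$, so for the finite complex $X=M_n\times\Sigma M_n$ one has $[X,\Aut\cO_{n+1}]\cong[X,\Omega\operatorname{B}\!\Aut\cO_{n+1}]\cong[\Sigma(X_+),\operatorname{B}\!\Aut\cO_{n+1}]$ (pointed classes, $X_+$ denoting $X$ with a disjoint base point), and under this identification the pointwise product on $\Map(X,\Aut\cO_{n+1})$ corresponds to the co-H (suspension) structure on $\Sigma(X_+)$. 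An H-structure on $\operatorname{B}\!\Aut\cO_{n+1}$ would then give a second unital operation on $[\Sigma(X_+),\operatorname{B}\!\Aut\cO_{n+1}]$ interchanging with the first, and Eckmann--Hilton (which you correctly invoke) would force commutativity, contradicting the first part; the same argument applied to $\Aut E_{n+1}$, whose homotopy set is non-abelian directly by Theorem \ref{ex1}, rules out an H-structure on $\operatorname{B}\!\Aut E_{n+1}$. So the conclusion stands, but the deduction must go through the adjunction $[X,\Omega Y]\cong[\Sigma(X_+),Y]$, not through any identification of $[X,\operatorname{B}\!\Aut\cO_{n+1}]$ with $[X,\Aut\cO_{n+1}]$.
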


\begin{rem}
If $n$ is an odd number, we can actually show 
 $$[M_n\times\Sigma M_n, \operatorname{Aut}\mathcal{O}_{n+1}]\cong [M_n\times \Sigma M_n,\Aut E_{n+1}]\times \mathbb{Z}_n.$$
\end{rem}

%%%%%%%%%%%%%%%%%%%%%%%%%%%%%%%%%%%%%%%%%%%%%%%%%%%%%%%%%%%%%%%%%%%%%%%%%%%%%%%%%%%%%%%%%%%%%%%%%%%%%%%%%%%%%%%%%%%%%%%%%
%%%%%%%%%%%%%%%%%%%%%%%%%%%%%%%%%%%%%%%%%%%%%%%%%%%%%%%%%%%%%%%%%%%%%%%%%%%%%%%%%%%%%%%%%%%%%%%%%%%%%%%%%%%%%%%%%%%%%%%%%
%%%%%%%%%%%%%%%%%%%%%%%%%%%%%%%%%%%%%%%%%%%%%%%%%%%%%%%%%%%%%%%%%%%%%%%%%%%%%%%%%%%%%%%%%%%%%%%%%%%%%%%%%%%%%%%%%%%%%%%%%
\section{Continuous fields of Cuntz algebras}
We first review Dadarlat's results on the continuous fields of the Cuntz algebras.
We refer to \cite[Definition 10.1.2, 10.1.3]{Dix} for the definition of the continuous fields of  C*-algebras.  
A locally trivial continuous field of  a C*-algebra $A$ is the section algebra of a locally trivial fiber bundle with the fibre $A$, which is an associated bundle of a principal $\operatorname{Aut}A$ bundle. 
By \cite[Theorem 1.1]{D2}, all continuous fields of $\mathcal{O}_{n+1}$ over finite CW-complexes are locally trivial.
So we identify the continuous fields of $\mathcal{O}_{n+1}$ over finite CW-complexes with principal $\operatorname{Aut}\mathcal{O}_{n+1}$ bundles.

For a compact Hausdorff space $X$, we denote by ${\rm Vect}_m\;(X)$ the set of the vector bundles of rank $m$. 
Dadarlat investigated continuous fields of $\mathcal{O}_{n+1}$ over $X$ arising 
from $E\in {\rm Vect}_{n+1}(X)$, which are Cuntz-Pimsner algebras. 
We refer to \cite{KT} and \cite{Pim} for Cuntz-Pimsner algebras. 
Fixing a Hermitian structure of $E$, we get a Hilbert $C(X)$-module from $E$, which we regard as a $C(X)$-$C(X)$-bimodule.  
Then the Pimsner construction gives the Cuntz-Pimsner algebra $\cO_E$, which is the quotient of 
$\mathcal{T}_E$ by $\mathcal{K}_E$. 
The algebra $\mathcal{O}_E$ is a continuous field of $\cO_{n+1}$ over $X$. 
We denote by $\theta_E : C(X)\to \mathcal{O}_E$ the natural unital inclusion.

\begin{thm}[{\cite[Theorem 4.8]{Pim}}]\label{op}
Let $X$ be a compact  Hausdorff space, and let $E$ be a vector bundle over $X$. Then we have the following exact sequence
$$\xymatrix{
K_0(C(X))\ar[r]^{1-[E]}&K_0(C(X))\ar[r]^{\theta_{E}}&K_0(\mathcal{O}_E)\ar[d]\\
K_1(\mathcal{O}_E)\ar[u]&K_1(C(X))\ar[l]^{\theta_E}&K_1(C(X))\ar[l]^{1-[E]}
}$$
where the map $\theta_E : C(X)\to \mathcal{O}_E$ is the natural inclusion, and the map $1-[E]$ is the multiplication by $1-[E]\in K^0(X)$.
\end{thm}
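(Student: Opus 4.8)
# Proof Proposal for Theorem \ref{op}

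The plan is to derive this six-term exact sequence from the Pimsner–Toeplitz extension for the $C(X)$-$C(X)$-bimodule $E$, exactly as in Pimsner's original argument. First I would recall the relevant extension
$$0\to \mathcal{K}_E\to \mathcal{T}_E\to \mathcal{O}_E\to 0,$$
where $\mathcal{K}_E=\mathcal{K}(\mathcal{F}_E)$ denotes the compact operators on the Fock module $\mathcal{F}_E=\bigoplus_{k\geq 0}E^{\otimes k}$ and $\mathcal{T}_E$ is the Toeplitz algebra generated by the creation operators. The key structural input is that $\mathcal{K}_E$ is Morita equivalent (indeed, here stably isomorphic after tensoring with $\mathbb{K}$) to $C(X)$ via the Fock module, so $K_i(\mathcal{K}_E)\cong K_i(C(X))$; and that $\mathcal{T}_E$ is KK-equivalent to $C(X)$ as well, with the inclusion $C(X)\hookrightarrow \mathcal{T}_E$ (as the $E^{\otimes 0}$-component acting on the Fock module) inducing the KK-equivalence. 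This is the content that makes the six-term exact sequence of the extension collapse into the displayed form.

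Next I would identify the maps. The boundary maps of the six-term sequence of the extension, transported across the identifications $K_i(\mathcal{K}_E)\cong K_i(C(X))$ and $K_i(\mathcal{T}_E)\cong K_i(C(X))$, become the two occurrences of $1-[E]$; this is the standard computation showing that the map $K_i(C(X))\to K_i(C(X))$ induced on K-theory by the inclusion $\mathcal{K}_E\hookrightarrow\mathcal{T}_E$ is multiplication by $1-[E]\in K^0(X)$ (the class $[E]$ enters because the generator of $K_0(\mathcal{K}_E)$, the rank-one projection onto $E^{\otimes 0}=C(X)$, maps under the inclusion to a projection whose class differs from the unit of $\mathcal{T}_E$ by $[E]$, using the defining relation of the creation operators that $1_{\mathcal{T}_E}-\sum T_iT_i^*$ projects onto $\mathcal{F}_E\ominus E\mathcal{F}_E$). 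Finally, the composition $K_i(C(X))\xrightarrow{\sim} K_i(\mathcal{T}_E)\to K_i(\mathcal{O}_E)$ is precisely $K_i(\theta_E)$, since $\theta_E$ factors as $C(X)\hookrightarrow\mathcal{T}_E\twoheadrightarrow\mathcal{O}_E$.

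Since the theorem is quoted verbatim from \cite[Theorem 4.8]{Pim}, the honest "proof" here is a citation together with this identification of the maps in the vector-bundle setting; I would not reprove Pimsner's theorem from scratch. The only point requiring genuine care — and the main potential obstacle — is verifying that the middle term really is $K_i(C(X))$ via the map $1-[E]$ with the correct sign and the correct role of $[E]$ versus $[E^\ast]$; one should check this against a trivial example, e.g.\ $E=X\times\mathbb{C}^{n+1}$, where $\mathcal{O}_E=C(X)\otimes\mathcal{O}_{n+1}$ and $1-[E]$ must reduce to multiplication by $1-(n+1)=-n$ on $K^0(X)$, recovering the familiar six-term sequence for $C(X)\otimes\mathcal{O}_{n+1}$ used throughout Section 2. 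Matching this boundary case pins down the formula and confirms that the orientation of the sequence agrees with the Bockstein/index-map conventions adopted earlier in the paper.
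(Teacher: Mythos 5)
Your proposal is essentially the paper's approach: the theorem is quoted verbatim from Pimsner's Theorem 4.8 and the paper offers no independent proof, so a citation together with your identification of $\mathcal{K}_E$ and $\mathcal{T}_E$ with $C(X)$ (Morita equivalence via the Fock module, KK-equivalence via the unital inclusion) and of the resulting map with multiplication by $1-[E]$, checked against the trivial bundle case $\mathcal{O}_E=C(X)\otimes\mathcal{O}_{n+1}$, is exactly what is intended. One small wording slip: the two occurrences of $1-[E]$ arise from the map $K_i(\mathcal{K}_E)\to K_i(\mathcal{T}_E)$ induced by the inclusion, not from the boundary maps of the extension (those become the unlabeled connecting maps $K_i(\mathcal{O}_E)\to K_{1-i}(C(X))$ in the displayed diagram), as your own parenthetical computation in fact correctly indicates.
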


Dadarlat found an invariant to classify the $C(X)$-linear isomorphism classes of $\mathcal{O}_E$.
\begin{thm}[{\cite[Theorem 1.1]{D1}}]
Let $X$ be a compact metrizable space, and let $E$ and $F$ be vector bundles of rank $\geq 2$ over $X$. Then there is a unital $*$-homomorphism $\varphi : \mathcal{O}_E\to \mathcal{O}_F$ with $\varphi\circ\theta_E=\theta_F$ if and only if $(1-[E])\cdot K^0(X)\subset(1-[F])\cdot K^0(X)$. Moreover we can take $\varphi$ to be an isomorphism if and only if $(1-[E])\cdot K^0(X)=(1-[F])\cdot K^0(X)$.
\end{thm}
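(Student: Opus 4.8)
The plan is to read off the statement from Pimsner's six-term exact sequence (Theorem~\ref{op}), its refinement to a distinguished triangle in equivariant $KK$-theory over $X$, and the $KK_X$-theoretic classification of continuous fields of Kirchberg algebras. First note that a unital $*$-homomorphism $\varphi$ with $\varphi\circ\theta_E=\theta_F$ is the same as a unital $C(X)$-linear $*$-homomorphism $\mathcal{O}_E\to\mathcal{O}_F$, since $\theta_E(C(X))$ and $\theta_F(C(X))$ are central. I will use repeatedly that, by Theorem~\ref{op}, the image of multiplication by $1-[E]$ on $K^0(X)=K_0(C(X))$ equals $\ker((\theta_E)_*\colon K^0(X)\to K_0(\mathcal{O}_E))$, so that $(1-[E])\cdot K^0(X)=\ker(\theta_E)_*$, and likewise for $F$.

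The ``only if'' directions are immediate from functoriality: if $\varphi$ is a unital $C(X)$-linear $*$-homomorphism, then $\varphi_*\circ(\theta_E)_*=(\theta_F)_*$ on $K_0$, hence $\ker(\theta_E)_*\subseteq\ker(\theta_F)_*$, which says exactly that $(1-[E])\cdot K^0(X)\subseteq(1-[F])\cdot K^0(X)$; if $\varphi$ is in addition an isomorphism, applying this to $\varphi^{-1}$ gives the reverse inclusion.

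For the converse I would use the refinement of Theorem~\ref{op} to a distinguished triangle $C(X)\xrightarrow{\,1-[E]\,}C(X)\xrightarrow{\,\theta_E\,}\mathcal{O}_E\to\Sigma C(X)$ in $KK_X$ (which underlies Pimsner's derivation of the exact sequence): $\mathcal{O}_E$ realizes the mapping cone of multiplication by $1-[E]$ on $C(X)$. Assuming $(1-[E])\cdot K^0(X)\subseteq(1-[F])\cdot K^0(X)$, I write $1-[E]=(1-[F])\cdot b$ with $b\in K^0(X)=KK_X(C(X),C(X))$; then the square with rows $C(X)\xrightarrow{1-[E]}C(X)$ and $C(X)\xrightarrow{1-[F]}C(X)$, left vertical arrow $b$ and right vertical arrow $\id$, commutes in $KK_X$, and naturality of the mapping cone supplies a morphism $x\colon\mathcal{O}_E\to\mathcal{O}_F$ in $KK_X$ with $[\theta_E]\cdot x=[\theta_F]$. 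By Dadarlat's classification of locally trivial continuous fields of Kirchberg algebras over compact metrizable spaces (the fibres $\mathcal{O}_{n+1}$ are Kirchberg algebras in the bootstrap class, and $\mathcal{O}_E$, $\mathcal{O}_F$ are locally trivial since $E$, $F$ are vector bundles), $x$ is realized by a unital $C(X)$-linear $*$-homomorphism $\varphi\colon\mathcal{O}_E\to\mathcal{O}_F$, which then satisfies $\varphi\circ\theta_E=\theta_F$. This proves the first assertion. For the second, assume in addition $1-[F]=(1-[E])\cdot a$; applying the rank homomorphism componentwise and using $\operatorname{rank}E,\operatorname{rank}F\geq 2$ forces $\operatorname{rank}a=\operatorname{rank}b=1$, so $b-1\in\tilde{K}^0(X)$ is nilpotent and $b$ is a unit of $K^0(X)$. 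Then both vertical arrows of the square above are $KK_X$-equivalences, so $x$ is a $KK_X$-equivalence, and the classification theorem then yields a $C(X)$-linear $*$-isomorphism $\varphi$ with $\varphi\circ\theta_E=\theta_F$.

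The step I expect to be the main obstacle is the classification input used above: realizing a $KK_X$-morphism, respectively a $KK_X$-equivalence, that preserves the $K_0$-class of the unit by a genuine unital $C(X)$-linear $*$-homomorphism, respectively $*$-isomorphism, together with the uniqueness up to $C(X)$-linear approximate unitary equivalence on which the isomorphism case relies. This is the $C(X)$-equivariant analogue of the Kirchberg--Phillips theorem; its proof uses the semiprojectivity of $\mathcal{O}_{n+1}$ and a reduction of $X$ to finite-dimensional approximations, and this is where the bootstrap/UCT property of the fibres and the local triviality of $\mathcal{O}_E$ and $\mathcal{O}_F$ genuinely enter. The other ingredients — functoriality of $K$-theory, naturality of the mapping cone, and an elementary rank count — are routine.
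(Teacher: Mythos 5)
This theorem is not proved in the paper at all: it is quoted from Dadarlat \cite[Theorem 1.1]{D1}, and the only argument the paper records is the ``only if'' observation that a unital homomorphism intertwining $\theta_E$ and $\theta_F$ forces $\operatorname{Ker}K_0(\theta_E)\subseteq\operatorname{Ker}K_0(\theta_F)$, which by Theorem \ref{op} is exactly the inclusion of ideals. Your ``only if'' direction coincides with this, and your reduction of the hypothesis to a single relation $1-[E]=(1-[F])\cdot b$, the rank count giving $\operatorname{rank}b=1$, and the nilpotency argument making $b$ a unit are all correct.

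The genuine gap is the ``if'' direction, where everything is delegated to a black box: ``Dadarlat's classification of locally trivial continuous fields of Kirchberg algebras over compact metrizable spaces,'' asserted to lift any unit-compatible $KK_X$-morphism (resp.\ $KK_X$-equivalence) to a unital $C(X)$-linear homomorphism (resp.\ isomorphism). No such theorem exists at this level of generality. The available classification results for fields of Kirchberg algebras (Kirchberg's isomorphism theorem over a space, and the results of \cite{D2} that this paper itself quotes) all require $X$ to be finite dimensional; handling an arbitrary compact metrizable $X$ is precisely the nontrivial point of \cite{D1}, and it is not disposed of by ``semiprojectivity plus finite-dimensional approximation'' hidden inside the cited theorem --- over infinite-dimensional spaces the passage from $KK_X$-data back to $C(X)$-isomorphism is exactly where phantom-type phenomena can enter, so the statement you invoke would itself need proof. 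Moreover, even for finite-dimensional $X$, the existence half you use (realizing an arbitrary, possibly non-invertible, unit-compatible element of $KK_X(\mathcal{O}_E,\mathcal{O}_F)$ by a unital $C(X)$-linear $*$-homomorphism) is stronger than the isomorphism/triviality statements available in \cite{D2}. Two repairs are possible and reflect what actually happens in \cite{D1}: (i) for existence, argue concretely rather than through $KK_X$ --- a unital $C(X)$-linear homomorphism $\mathcal{O}_E\to A$ is the same thing as an isometric covariant representation of the bimodule $E$ in $A$, i.e.\ an isomorphism of Hilbert $A$-modules $E\otimes_{C(X)}A\cong A$, and the hypothesis $1-[E]\in\operatorname{Ker}K_0(\theta_F)=(1-[F])K^0(X)$ together with cancellation of full properly infinite projections produces such a representation inside $\mathcal{O}_F$; (ii) for the reduction of the base space, note that $X$ is an inverse limit of finite complexes, that $K^0$ is continuous, so the bundles $E,F$ and the finitely many relations $1-[E]=(1-[F])b$, $1-[F]=(1-[E])a$ descend to a finite stage $Y$, and that the Cuntz--Pimsner construction is natural under pullback, so it suffices to treat a finite complex, where the classification machinery of \cite{D2} genuinely applies. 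Your triangulated-category step (naturality of the mapping cone and the two-out-of-three argument for equivalences) is fine granted the $KK_X$-version of Pimsner's triangle, but without (i) or (ii) the key lifting step rests on an unavailable theorem, so the proof as written does not go through.
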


The key observation of Dadarlat is that if there is a $C(X)$-linear isomorphism $\varphi : \mathcal{O}_E\to\mathcal{O}_F$, 
we have $(1-[E])\cdot K^0(X)=\operatorname{Ker}K_0(\theta_E)=\operatorname{Ker}K_0(\theta_F)=(1-[F])\cdot K^0(X)$ 
by the exact sequence of Theorem \ref{op}. 
Dadarlat also estimate the cardinality of the set of the $C(X)$-linear isomorphism classes of $\mathcal{O}_E$.
We denote $\lceil x\rceil\colon ={\rm min}\{k\in\mathbb{Z} \colon k\geq x\}$.
\begin{thm}\label{DG}
Let $X$ be a finite connected CW-complex with $\operatorname{Tor}(H^*(X), \mathbb{Z}_n)=0$.
Then the following holds.\\
${\rm (1)}$ $|\tilde{K}^0(X)\otimes \mathbb{Z}_n|=|\tilde{H}^{even}(X, \mathbb{Z}_n)|$.\\
$\rm (2)$ If  $n\geq\lceil({\rm dim}\;X-3)/3\rceil$, the set $\{[\mathcal{O}_E];\; E\in {\rm Vect}_{n+1}(X)\}$ exhausts 
all the isomorphism classes of continuous fields of $\cO_{n+1}$ over $X$, and its cardinality is  
$|\tilde{K}^0(X)\otimes \mathbb{Z}_n|$. 
\end{thm}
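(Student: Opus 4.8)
The plan is to treat the two items separately. Item (1) is a routine consequence of the universal coefficient theorem, the Chern character, and the comparison of ordinary and topological $K$-theory through the Atiyah--Hirzebruch spectral sequence; item (2) is deduced from Dadarlat's classification of the algebras $\mathcal{O}_E$ together with a sharpening of his exhaustion result, and its cohomological form is then just (1).

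For (1) I would show that $|\tilde{K}^0(X)\otimes\Z_n|$ and $|\tilde{H}^{even}(X;\Z_n)|$ both equal $n^{r}$ with $r=\operatorname{rank}\tilde{K}^0(X)$. The hypothesis $\Tor(H^*(X),\Z_n)=0$ says $H^*(X)$ has no $p$-torsion for any prime $p$ dividing $n$; by the universal coefficient theorem $\tilde{H}^j(X;\Z_n)\cong\tilde{H}^j(X)\otimes\Z_n$ for all $j$, so $|\tilde{H}^{even}(X;\Z_n)|=n^{\sum_{k\ge1}b_{2k}(X)}$, which equals $n^{r}$ by the Chern character. For the $K$-theory side I would invoke the standard fact that absence of $p$-torsion in $H^*(X)$ forces absence of $p$-torsion in $K^*(X)$: comparing the integral and $\mathbb{Z}/p$ Atiyah--Hirzebruch spectral sequences gives $\dim_{\mathbb{Z}/p}K^*(X;\mathbb{Z}/p)\le\dim_{\mathbb{Z}/p}H^*(X;\mathbb{Z}/p)=\operatorname{rank}K^*(X)$, which forces the torsion subgroup of $K^*(X)$ to have no $p$-part, so that $\tilde{K}^0(X)\otimes\Z_n\cong\Z_n^{\oplus r}$, of cardinality $n^{r}$.

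For (2), recall that the $U(n+1)$-action on the linear span of the canonical generators identifies $[\mathcal{O}_E]$ with the image of $[E]\in[X,BU(n+1)]$ under the natural map $\Phi\colon[X,BU(n+1)]\to[X,B\Aut\cO_{n+1}]$, and that, since every continuous field of $\cO_{n+1}$ over a finite complex is locally trivial, $[X,B\Aut\cO_{n+1}]$ is exactly the set of isomorphism classes of continuous fields of $\cO_{n+1}$ over $X$; thus exhaustion is the surjectivity of $\Phi$. Dadarlat \cite[Theorem 1.6]{D1} established this (in fact bijectivity of $\Phi$) for $n\ge\lceil(\dim X-3)/2\rceil$; to get the weaker hypothesis $n\ge\lceil(\dim X-3)/3\rceil$ I would sharpen the connectivity estimate for $BU(n+1)\to B\Aut\cO_{n+1}$ by routing the argument through $\Aut E_{n+1}$, whose homotopy groups and the weak equivalence $\Aut E_{n+1}\simeq\End_0E_{n+1}$ are controlled by \cite{ST} (here $E_{n+1}$ being $KK$-equivalent to $\C$ is what buys an extra dimension in the obstruction range), together with the comparison between $\Aut E_{n+1}$ and $\Aut\cO_{n+1}$ recorded in Lemma \ref{com}. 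Granting surjectivity of $\Phi$, exhaustion follows, and by Dadarlat's classification \cite[Theorem 1.1]{D1} --- whose key input $\operatorname{Ker}K_0(\theta_E)=(1-[E])\cdot K^0(X)$ is Theorem \ref{op} --- the isomorphism classes of the $\mathcal{O}_E$ correspond bijectively to the distinct ideals $(1-[E])\cdot K^0(X)$ with $E\in{\rm Vect}_{n+1}(X)$. Writing $[E]=(n+1)+x$ with $x\in\tilde{K}^0(X)$, so that $1-[E]=-(n+x)$, I would then use the augmentation splitting $K^0(X)=\Z\cdot 1\oplus\tilde{K}^0(X)$ and the nilpotence of $\tilde{K}^0(X)$ --- on the graded pieces of its filtration by powers of the augmentation ideal, multiplication by $n+x$ agrees with multiplication by $n$, and $K^*(X)$ has no $n$-torsion --- to show that $(1-[E])\cdot K^0(X)$ is determined by, and (using the surjectivity of $\Phi$ to realize all values) takes every value parametrized by, the class of $x$ in $\tilde{K}^0(X)/n\tilde{K}^0(X)$. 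This gives the count $|\tilde{K}^0(X)\otimes\Z_n|$, which by (1) is also $|\tilde{H}^{even}(X;\Z_n)|$.

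The main obstacle is the sharpened exhaustion statement: the naive obstruction-theoretic comparison of $BU(n+1)$ with $BU$ yields only $n\ge\lceil(\dim X-3)/2\rceil$, and pushing the exponent down to $\lceil(\dim X-3)/3\rceil$ is exactly what forces one through the finer homotopical analysis of $\Aut E_{n+1}$ developed earlier. A secondary point needing care in the cardinality computation is that the ideal $(1-[E])\cdot K^0(X)$ must be seen to remember the entire class of $x$ modulo $n\tilde{K}^0(X)$, not merely the ideal generated by that class; the augmentation grading --- which records the coset of the rank $-n$ element $1-[E]$ itself and not only its reduction mod $n$ --- is what supplies this finer information.
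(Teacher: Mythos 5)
First, a point of orientation: the paper does not prove Theorem~\ref{DG} at all --- it is quoted as Dadarlat's result (the preceding sentence attributes the estimate to him, and the section's stated goal is only to remove the dimension restriction afterwards by a localization trick). So your proposal is an attempt to reprove an imported theorem. Your argument for part (1) is fine and is the standard one (universal coefficients plus the Atiyah--Hirzebruch comparison to rule out $n$-torsion in $\tilde{K}^0(X)$, then the Chern character to match ranks), so there is no issue there.

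For part (2) there are two genuine gaps. The first is the exhaustion statement under the stated bound: this is exactly the hard quantitative content, and you leave it unproven --- ``routing the argument through $\Aut E_{n+1}$'' is a plan, not an argument, and nothing in \cite{ST} or in Lemma~\ref{com} supplies the connectivity estimate you would need; note also that the paper's introduction records Dadarlat's hypothesis as $n\geq\lceil(d-3)/2\rceil$ while the theorem here writes $\lceil(\dim X-3)/3\rceil$, so the discrepancy is most plausibly a typo in a quoted result rather than a statement awaiting a sharpened proof, and in any case your proposal does not deliver the sharpening it promises. The second gap is in the cardinality count. You argue via a purported bijection in which the ideal $(1-[E])K^0(X)$ ``remembers'' the class of $x=[E]-(n+1)$ in $\tilde{K}^0(X)\otimes\Z_n$. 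That injectivity is not established and is stronger than what the filtration argument yields: if $(n+x)(1+z)=n+y$ then $y-x=nz+xz$, and $xz$ need not lie in $n\tilde{K}^0(X)$, so equal ideals do not obviously force $x\equiv y$ modulo $n$. What Dadarlat actually proves (and what the paper later adapts in Lemma~\ref{tec} and Corollary~\ref{Syn}) is only the counting inequality ``number of ideals $\geq|\tilde{K}^0(X)\otimes\Z_n|$'', obtained by an inductive argument along the Atiyah--Hirzebruch filtration; the matching upper bound comes from the obstruction-theoretic estimate $|\tilde{H}^{even}(X,\Z_n)|\geq|[X,\operatorname{BAut}\cO_{n+1}]|$ (Proposition~\ref{tower}, i.e.\ \cite[Proposition 5.1]{D1}) combined with (1). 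Your proposal never invokes any such upper bound, so even granting exhaustion it only yields an inequality, not the asserted equality of cardinalities. (A smaller omission: realizing every class of $\tilde{K}^0(X)\otimes\Z_n$ by a rank-$(n+1)$ bundle also uses the dimension hypothesis through the stable range; you appeal to it only implicitly.)
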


Our goal in this section is to remove the restriction $n\geq\lceil({\rm dim}\;X-3)/3\rceil$ from the above statement 
using a localization trick. 
In fact, all the necessary algebraic arguments for the proof are already in Dadarlat's paper \cite{D1} 

Let $P_n$ be the set of all prime numbers $p$ with $(n,p)=1$, 
and let $\M_{(n)}$ be the UHF algebra 
$$\M_{(n)}=\bigotimes_{p\in P_n}\M_{p^\infty}.$$
This is the unique UHF algebra satisfying $K_0(\M_{(n)})=\mathbb{Z}_{(n)}$ where $\mathbb{Z}_{(n)}$ is a localization of $\mathbb{Z}$ by $n$. 
Assume that $r$ is a natural number with $(n,r)=1$. 
Then the K-groups of $\mathcal{O}_{nr+1}\otimes \mathbb{M}_{(n)}$ are 
 $$K_0(\mathcal{O}_{nr+1}\otimes \mathbb{M}_{(n)})=\mathbb{Z}_{nr}\otimes\mathbb{Z}_{(n)}=\mathbb{Z}_n=\langle[1]_0\rangle, \;\;
 K_1(\mathcal{O}_{nr+1}\otimes \mathbb{M}_{(n)})=0.$$ 
Therefore Kirchberg and Phillips' classification theorem \cite[Theorem 4.2.4]{Phill} yields $\mathcal{O}_{nr+1}\otimes \mathbb{M}_{(n)}\cong\mathcal{O}_{n+1}$. Let $F_r$ be a vector bundle over $X$ of rank $nr+1$. Then we have a continuous field of $\mathcal{O}_{n+1}$ of the form $\mathcal{O}_{F_r}\otimes \mathbb{M}_{(n)}$.

\begin{dfn}
We denote by $\mathcal{O}(X)_n$ the $C(X)$-linear isomorphism classes of continuous fields of the Cuntz algebra 
$\mathcal{O}_{n+1}$ over $X$ of the form $\mathcal{O}_{F_r}\otimes \mathbb{M}_{(n)}$ for $F_r\in{\rm Vect}_{nr+1}(X)$ with $(n, r)=1$.
\end{dfn}

Note that we have $K_*(C(X)\otimes \mathbb{M}_{(n)})=K^*(X)\otimes \mathbb{Z}_{(n)}$. 
Following Dadarlat' s argument, we consider an ideal $(1-[F_r])K^0(X)\otimes \mathbb{Z}_{(n)}$ of the ring $K^0(X)\otimes \mathbb{Z}_{(n)}$. 
\begin{lem}\label{inv}
Let $X$ be a finite connected CW-complex.
Let $F_r$ and $F_R$ be vector bundles over $X$ of rank $nr+1$ and $nR+1$ respectively, with $(n, r)=(n, R)=1$. If $\mathcal{O}_{F_r}\otimes\mathbb{M}_{(n)}$ is $C(X)$-linearly isomorphic to $\mathcal{O}_{F_R}\otimes \mathbb{M}_{(n)}$, we have $(1-[F_r]) K^0(X)\otimes\mathbb{Z}_{(n)}=(1-[F_R])K^0(X)\otimes \mathbb{Z}_{(n)}$. 
\end{lem}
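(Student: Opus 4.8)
The plan is to mimic exactly the reasoning Dadarlat used for $\mathcal{O}_E$ itself, only now working over the localized coefficient ring $\mathbb{Z}_{(n)}$, and the crucial tool is the Pimsner six-term exact sequence (Theorem~\ref{op}). First I would observe that tensoring the Pimsner sequence for $\mathcal{O}_{F_r}$ by the UHF algebra $\M_{(n)}$ (equivalently, applying $-\otimes\mathbb{Z}_{(n)}$ to $K_*$, since $K_*$ is a continuous functor that commutes with inductive limits and $K_0(\M_{(n)})=\mathbb{Z}_{(n)}$ is flat over $\mathbb{Z}$) yields the exact sequence
\begin{equation*}
K^0(X)\otimes\mathbb{Z}_{(n)}\xrightarrow{1-[F_r]}K^0(X)\otimes\mathbb{Z}_{(n)}\xrightarrow{(\theta_{F_r})_*}K_0(\mathcal{O}_{F_r}\otimes\M_{(n)})\to\cdots
\end{equation*}
and similarly for $F_R$. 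Here I use that $\mathcal{O}_{F_r}\otimes\M_{(n)}$ is again a $C(X)$-algebra with structure map $\theta_{F_r}\otimes 1$, and that $-\otimes\M_{(n)}$ is exact on C*-algebras. Consequently $\operatorname{Ker}\big(K_0(\theta_{F_r}\otimes 1)\big)=(1-[F_r])K^0(X)\otimes\mathbb{Z}_{(n)}$, the image of multiplication by $1-[F_r]$, and likewise for $F_R$.

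Next, suppose $\varphi\colon\mathcal{O}_{F_r}\otimes\M_{(n)}\to\mathcal{O}_{F_R}\otimes\M_{(n)}$ is a $C(X)$-linear isomorphism. Then $\varphi$ intertwines the structure maps, i.e. $\varphi\circ(\theta_{F_r}\otimes 1)=\theta_{F_R}\otimes 1$ up to the identification of both C*-algebras as $C(X)$-algebras (this is precisely what $C(X)$-linearity means). Applying $K_0$ and using that $K_0(\varphi)$ is an isomorphism, we get $\operatorname{Ker}\big(K_0(\theta_{F_r}\otimes 1)\big)=\operatorname{Ker}\big(K_0(\theta_{F_R}\otimes 1)\big)$, which by the previous paragraph is exactly the asserted equality of ideals $(1-[F_r])K^0(X)\otimes\mathbb{Z}_{(n)}=(1-[F_R])K^0(X)\otimes\mathbb{Z}_{(n)}$.

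The only genuinely delicate point is the bookkeeping in the first paragraph: one must check that the Pimsner sequence behaves well under the tensor product with $\M_{(n)}$, i.e. that $\mathcal{O}_{F_r}\otimes\M_{(n)}$ may legitimately be viewed as the Cuntz--Pimsner-type algebra whose six-term sequence is the $\M_{(n)}$-stabilization of Theorem~\ref{op}'s sequence, with connecting maps the stabilizations of the original ones. This follows because $-\otimes\M_{(n)}$ is an exact functor sending the short exact sequence $0\to\mathcal{K}_{F_r}\to\mathcal{T}_{F_r}\to\mathcal{O}_{F_r}\to 0$ to another short exact sequence, so naturality of the six-term sequence in K-theory gives the localized sequence with $K_*(-)\otimes\mathbb{Z}_{(n)}$ in each slot; flatness of $\mathbb{Z}_{(n)}$ ensures exactness is preserved after tensoring the abelian groups. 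I expect the referee-level subtlety to be merely confirming that the structure map of $\mathcal{O}_{F_r}\otimes\M_{(n)}$ is $\theta_{F_r}\otimes 1$ and that $C(X)$-linear isomorphisms respect it; everything else is Dadarlat's argument verbatim with $K^0(X)$ replaced by $K^0(X)\otimes\mathbb{Z}_{(n)}$.
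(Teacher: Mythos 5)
Your first paragraph (the localized Pimsner sequence, exactness of $-\otimes\M_{(n)}$, flatness of $\Z_{(n)}$) is fine and agrees with the paper. The gap is in the second paragraph, and it sits exactly where the real work of the lemma lies. You conflate two different maps: the unital inclusion $\iota_r\colon C(X)\to\mathcal{O}_{F_r}\otimes\M_{(n)}$, $f\mapsto\theta_{F_r}(f)\otimes 1$, and the map $\theta_{F_r}\otimes\mathrm{id}_{\M_{(n)}}\colon C(X)\otimes\M_{(n)}\to\mathcal{O}_{F_r}\otimes\M_{(n)}$. The kernel you computed in the first paragraph, $(1-[F_r])K^0(X)\otimes\Z_{(n)}$, is the kernel of $K_0(\theta_{F_r}\otimes\mathrm{id}_{\M_{(n)}})$, whose domain is $K_0(C(X)\otimes\M_{(n)})=K^0(X)\otimes\Z_{(n)}$. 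But $C(X)$-linearity of $\varphi$ only says $\varphi\circ\iota_r=\iota_R$; it does \emph{not} give $\varphi\circ(\theta_{F_r}\otimes\mathrm{id}_{\M_{(n)}})=\theta_{F_R}\otimes\mathrm{id}_{\M_{(n)}}$, because $\varphi$ is only $C(X)$-linear, not $C(X)\otimes\M_{(n)}$-linear: there is no reason it should carry the copy $1\otimes\M_{(n)}$ inside $\mathcal{O}_{F_r}\otimes\M_{(n)}$ onto the corresponding copy in $\mathcal{O}_{F_R}\otimes\M_{(n)}$. Intertwining the genuine structure maps $\iota_r$, $\iota_R$ only compares kernels inside $K_0(C(X))=K^0(X)$, not the $\Z_{(n)}$-submodules of $K^0(X)\otimes\Z_{(n)}$ that the statement is about, and since $K^0(X)\to K^0(X)\otimes\Z_{(n)}$ is not surjective this does not immediately yield the claim. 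So the step you dismiss as "merely confirming" the intertwining is precisely the missing argument.

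The paper closes this gap by tensoring with a further copy of $\M_{(n)}$: it compares $\theta_{F_r}\otimes\mathrm{id}$ with its composition into $\mathcal{O}_{F_r}\otimes\M_{(n)}\otimes\M_{(n)}$, uses \cite[Theorem 2.2]{DW} (the half-flip: $1\otimes\mathrm{id}$ and $\mathrm{id}\otimes 1$ are homotopic as maps $\M_{(n)}\to\M_{(n)}\otimes\M_{(n)}$) to make the outer squares commute up to homotopy, uses $C(X)$-linearity of $\varphi\otimes\mathrm{id}_{\M_{(n)}}$ only for the middle square, and checks that $\mathrm{id}\otimes 1\otimes\mathrm{id}$ induces a $K_0$-isomorphism; this shows the relevant square commutes on $K_0$ with domain $K_0(C(X)\otimes\M_{(n)})$, after which your kernel comparison goes through. (Alternatively, one could repair your route algebraically: $\operatorname{Ker}K_0(\theta_{F_r}\otimes\mathrm{id})$ is a $\Z_{(n)}$-submodule of $K^0(X)\otimes\Z_{(n)}$ and is the $\Z_{(n)}$-span of its preimage under $x\mapsto x\otimes 1$, and that preimage is $\operatorname{Ker}K_0(\iota_r)$, which $C(X)$-linearity does control; but some such argument must be supplied, and checking $\Z_{(n)}$-linearity of the localized map is part of it.) As written, the proposal is incomplete at this point.
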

\begin{proof}
Let $\varphi \colon \mathcal{O}_{F_r}\otimes\mathbb{M}_{(n)} \to\mathcal{O}_{F_R}\otimes \mathbb{M}_{(n)}$ be a $C(X)$-linear isomorphism.
First, we show that the following diagram induces a commutative diagram of $K_0$-groups :
$$\xymatrix{
C(X)\otimes \mathbb{M}_{(n)}\ar[rr]^{\theta_{F_r}\otimes {\rm id}}\ar@{=}[d]&&\mathcal{O}_{F_r}\otimes \mathbb{M}_{(n)}\ar[d]^{{\rm id}\otimes 1\otimes {\rm id}}\\
C(X)\otimes \mathbb{M}_{(n)}\ar[rr]^{\theta_{F_r}\otimes {\rm id}\otimes 1}\ar@{=}[d]&&\mathcal{O}_{F_r}\otimes \mathbb{M}_{(n)}\otimes \mathbb{M}_{(n)}\ar[d]^{\varphi\otimes {\rm id}}\\
C(X)\otimes \mathbb{M}_{(n)}\ar[rr]^{\theta_{F_R}\otimes {\rm id}\otimes 1}\ar@{=}[d]&&\mathcal{O}_{F_R}\otimes \mathbb{M}_{(n)}\otimes\mathbb{M}_{(n)}\\
C(X)\otimes \mathbb{M}_{(n)}\ar[rr]^{\theta_{F_R}\otimes {\rm id}}&&\mathcal{O}_{F_R}\otimes \mathbb{M}_{(n)}.\ar[u]^{{\rm id}\otimes 1\otimes{\rm id}}
}$$
The middle square of the diagram commutes because $\varphi$ is $C(X)$-linear.
By \cite[Theorem 2.2]{DW}, two $*$-homomorphisms $1\otimes {\rm id}, {\rm id}\otimes 1 \colon \mathbb{M}_{(n)}\to\mathbb{M}_{(n)}\otimes\mathbb{M}_{(n)}$ are homotopic. So the upper and lower square of the diagram commute up to homotopy, and commutes in the level of $K$-groups.

Second, we show the vertical map ${\rm id}\otimes 1\otimes {\rm id} \colon \mathcal{O}_{F_r}\otimes\mathbb{M}_{(n)}\to\mathcal{O}_{F_r}\otimes\mathbb{M}_{(n)}\otimes\mathbb{M}_{(n)}$ induces an isomorphism of the K-groups. One has an isomorphism $\psi : \mathbb{M}_{(n)}\to\mathbb{M}_{(n)}\otimes\mathbb{M}_{(n)}$. By \cite[Theorem 2.2]{DW}, two maps $1\otimes {\rm id}$ and $\psi$ are homotopic.  So the map $K_0({\rm id}\otimes 1\otimes {\rm id})=K_0({\rm id}\otimes \psi)$ is an isomorphism.\\

Finally, we show $(1-[F_r])K^0(X)\otimes \mathbb{Z}_{(n)}=(1-[F_R])K^0(X)\otimes \mathbb{Z}_{(n)}$.
An exact sequence $0\to \mathcal{K}_{F_r}\otimes \mathbb{M}_{(n)}\to\mathcal{T}_{F_r}\otimes\mathbb{M}_{(n)}\to\mathcal{O}_{F_r}\otimes \mathbb{M}_{(n)}\to 0$ gives a $6$-term exact sequence, and we have the following exact sequence :
$$K_0(C(X))\otimes\mathbb{Z}_{(n)}\xrightarrow{(1-[F_r])\otimes 1}K_0(C(X))\otimes \mathbb{Z}_{(n)}\xrightarrow{K_0(\theta_{F_r}\otimes{\rm id})}K_0(\mathcal{O}_{F_r}\otimes \mathbb{M}_{(n)}).$$
So we have $\operatorname{Ker}K_0(\theta_{F_r}\otimes {\rm id})=(1-[F_r])K^0(X)\otimes \mathbb{Z}_{(n)}$. This gives the conclusion because the diagram below commutes by the above argument :
$$\xymatrix{
K_0(C(X))\otimes \mathbb{Z}_{(n)}\ar[rr]^{K_0({\rm id}\otimes \theta_{F_r})}\ar@{=}[d]&&K_0(\mathcal{O}_{F_r}\otimes \mathbb{M}_{(n)})\ar[d]^{K_0(\varphi)}\\
K_0(C(X))\otimes \mathbb{Z}_{(n)}\ar[rr]^{K_0({\rm id}\otimes \theta_{F_R})}&&K_0(\mathcal{O}_{F_R}\otimes \mathbb{M}_{(n)}).
}$$
 
\end{proof}

We define an equivalence relation $\sim_n$ in $\tilde{K}^0(X)\otimes \mathbb{Z}_{(n)}$.

\begin{dfn}
Let $a$ and $b$ be elements in $\tilde{K}^0(X)\otimes\mathbb{Z}_{(n)}$. Then $a\sim_n b$ if there exists $z\in\tilde{K}^0(X)\otimes\mathbb{Z}_{(n)}$ satisfying $(n+a)(1+z)=(n+b)$.
\end{dfn}

All elements of $\tilde{K}^0(X)\otimes\mathbb{Z}_{(n)} $ are nilpotent by \cite[Chap.II, Theorem 5.9]{K}.
The relation $\sim_n$ is well-defined because  $(1-z)$ has the inverse $\sum_{k=0}^{\infty}z^k$.
For a vector bundle $E$ of rank $m$, we denote $[\tilde{E}]\colon=[E]-m$.

\begin{lem}\label{equiv}
Let $X$ be a connected compact Hausdorff space, and let $F_r$ and $F_R$ be vector bundles of rank $nr+1$ and $nR+1$ respectively with $(n, r)=(n, R)=1$. 
If $(1-[F_r])K^0(X)\otimes \mathbb{Z}_{(n)}=(1-[F_R])K^0(X)\otimes \mathbb{Z}_{(n)}$, we have $[\tilde{F_r}]r^{-1}\sim_n [\tilde{F_R}]R^{-1}$.
\end{lem}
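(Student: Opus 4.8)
The plan is to absorb the units $r$ and $R$ into the generators of the ideals and then read off the rank of the multiplier. Set $a=[\tilde{F_r}]r^{-1}$ and $b=[\tilde{F_R}]R^{-1}$, which lie in $\tilde{K}^0(X)\otimes\Z_{(n)}$ since $[\tilde{F_r}]$ and $[\tilde{F_R}]$ have rank $0$. Because $F_r$ has rank $nr+1$, one has $1-[F_r]=-(nr+[\tilde{F_r}])=-r(n+a)$ in $K^0(X)\otimes\Z_{(n)}$, and likewise $1-[F_R]=-R(n+b)$. The hypotheses $(n,r)=(n,R)=1$ make $r$ and $R$ units of $\Z_{(n)}$, so the principal ideal $(1-[F_r])\bigl(K^0(X)\otimes\Z_{(n)}\bigr)$ equals $(n+a)\bigl(K^0(X)\otimes\Z_{(n)}\bigr)$, and similarly for $R$. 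Thus the assumption of the lemma is precisely the equality of ideals $(n+a)\bigl(K^0(X)\otimes\Z_{(n)}\bigr)=(n+b)\bigl(K^0(X)\otimes\Z_{(n)}\bigr)$.

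Next I would produce the witness $z$. Since $n+b$ lies in the ideal generated by $n+a$, there is $w\in K^0(X)\otimes\Z_{(n)}$ with $n+b=(n+a)w$. As $X$ is connected, the rank homomorphism splits $K^0(X)=\Z\oplus\tilde{K}^0(X)$, and tensoring with the flat $\Z$-module $\Z_{(n)}$ gives $K^0(X)\otimes\Z_{(n)}=\Z_{(n)}\oplus\bigl(\tilde{K}^0(X)\otimes\Z_{(n)}\bigr)$; write $w=c+z$ accordingly with $c\in\Z_{(n)}$ and $z\in\tilde{K}^0(X)\otimes\Z_{(n)}$. Applying the rank map to $n+b=(n+a)w$, and using that $a$ and $b$ have rank $0$, yields $n=nc$ in $\Z_{(n)}$; since $\Z_{(n)}\subset\Q$ is an integral domain and $n\neq0$, we get $c=1$. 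Hence $(n+a)(1+z)=n+b$, that is $\bigl(n+[\tilde{F_r}]r^{-1}\bigr)(1+z)=n+[\tilde{F_R}]R^{-1}$, which by definition means $[\tilde{F_r}]r^{-1}\sim_n[\tilde{F_R}]R^{-1}$.

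The only steps needing care are the two invocations of the localization: that $r$ and $R$ become units of $\Z_{(n)}$, which is what lets one replace the generator $1-[F_r]$ by $n+[\tilde{F_r}]r^{-1}$, and that $\Z_{(n)}$ is a domain, which upgrades $nc=n$ to $c=1$ and hence makes the multiplier have rank exactly one. Neither is a real obstacle, so I expect this lemma to be, as the surrounding text indicates, the bookkeeping that recasts Lemma \ref{inv} in terms of $\sim_n$; the substantive work — that a $C(X)$-linear isomorphism $\mathcal{O}_{F_r}\otimes\M_{(n)}\cong\mathcal{O}_{F_R}\otimes\M_{(n)}$ forces equality of these ideals — has already been carried out there.
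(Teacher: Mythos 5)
Your argument is correct and is essentially the paper's proof: both use one inclusion of the ideals to produce a multiplier sending $1-[F_r]$ (equivalently $n+[\tilde{F_r}]r^{-1}$, after dividing by the unit $r$) to $1-[F_R]$, and both use the rank splitting of $K^0(X)\otimes\mathbb{Z}_{(n)}$ to see that this multiplier has augmentation $1$, hence has the form $1+z$ with $z\in\tilde{K}^0(X)\otimes\mathbb{Z}_{(n)}$. The only difference is cosmetic — you divide by the units $r,R$ before choosing the multiplier, while the paper chooses $h$ with $(nr+[\tilde{F_r}])h=nR+[\tilde{F_R}]$ and rescales afterwards.
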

\begin{proof}
By assumption we have $h\in K^0(X)\otimes \mathbb{Z}_{(n)}$ satisfying $(nr+[\tilde{F_r}])h=(nR+[\tilde{F_R}])$. A split exact sequence $0\to\tilde{K}^0(X)\otimes\mathbb{Z}_{(n)}\to K^0(X)\otimes \mathbb{Z}_{(n)}\xrightarrow{{\rm ev}_{pt}}K^0(\{pt\})\otimes \mathbb{Z}_{(n)}\to 0$ yields $h-R/r\in\tilde{K}^0(X)\otimes\mathbb{Z}_{(n)}$. So we have $(n+[\tilde{F_r}]r^{-1})(1+\frac{r}{R}(h-R/r))=(n+[\tilde{F_R}])$.
\end{proof}
By Lemma \ref{inv} and Lemma \ref{equiv} , the map $I_n : \mathcal{O}(X)_n\ni [\mathcal{O}_{F_r}\otimes \mathbb{M}_{(n)}]\mapsto [[\tilde{F_r}]r^{-1}]\in \tilde{K}^0(X)\otimes \mathbb{Z}_{(n)}/\sim_n$ is well-defined.
\begin{lem}\label{big}
Let $X$ be a finite dimensional connected compact Hausdorff space. Then the map $I_n$ is surjective, and we have 
$$|[X, \operatorname{BAut}\mathcal{O}_{n+1}]|\geq|\mathcal{O}(X)_n|\geq |\tilde{K}^0(X)\otimes \mathbb{Z}_{(n)}/\sim_n|.$$
\end{lem}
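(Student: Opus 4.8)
The plan is to prove Lemma \ref{big} by exhibiting, for every class $[a]$ in $\tilde{K}^0(X)\otimes\mathbb{Z}_{(n)}/\sim_n$, an actual vector bundle $F_r$ of rank $nr+1$ with $(n,r)=1$ whose normalized class $[\tilde{F_r}]r^{-1}$ represents $[a]$; this gives surjectivity of $I_n$. Once $I_n$ is surjective, the chain of inequalities follows: the map $\mathcal{O}(X)_n\to [X,\operatorname{BAut}\mathcal{O}_{n+1}]$ sending a continuous field to the isomorphism class of its classifying principal bundle is injective because a $C(X)$-linear isomorphism of continuous fields is the same as an isomorphism of the associated principal $\operatorname{Aut}\mathcal{O}_{n+1}$-bundles (using local triviality from \cite[Theorem 1.1]{D2}, as recalled in the text), so $|[X,\operatorname{BAut}\mathcal{O}_{n+1}]|\geq|\mathcal{O}(X)_n|$; and $|\mathcal{O}(X)_n|\geq|\tilde K^0(X)\otimes\mathbb{Z}_{(n)}/\sim_n|$ is exactly surjectivity of the well-defined map $I_n$.

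For the surjectivity of $I_n$, I would argue as follows. Fix $a\in\tilde{K}^0(X)\otimes\mathbb{Z}_{(n)}$; since $X$ is a finite-dimensional compact Hausdorff space, $\tilde K^0(X)$ is finitely generated, so there is a natural number $s$ with $(n,s)=0$-free — more precisely, choose $s$ invertible in $\mathbb{Z}_{(n)}$ (i.e. $(n,s)=1$) such that $sa$ lies in the image of $\tilde K^0(X)\to\tilde K^0(X)\otimes\mathbb{Z}_{(n)}$. Write $sa=[\tilde E]$ for a genuine virtual bundle; adding a trivial bundle we may take $[\tilde E]=[E]-m$ with $E$ an honest vector bundle of some rank $m$. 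Now I want to adjust the rank to be of the form $nr+1$ with $(n,r)=1$ while controlling the normalized class. Replacing $E$ by $E\oplus\theta^k$ changes $[\tilde E]$ not at all but changes the rank by $k$; and one must also possibly scale. The key point, already implicit in Dadarlat's framework, is that the equivalence $\sim_n$ is designed precisely so that $[\tilde{F_r}]r^{-1}$ only depends on the ideal $(1-[F_r])K^0(X)\otimes\mathbb{Z}_{(n)}$ up to this relation, and that multiplying the "$n$-shifted" class $n+[\tilde F_r]r^{-1}$ by a unit $1+z$ is realized at the level of bundles by stabilizing and passing to a Morita-type replacement. I would therefore produce $F_r$ by taking $r$ coprime to $n$ large enough that $nr+1\geq m$ and $nr+1\equiv$ (something compatible) and set $F_r=E^{\oplus?}\oplus\theta^{?}$, then check $[\tilde{F_r}]r^{-1}\sim_n a$ directly from the definition of $\sim_n$, using that all elements of $\tilde K^0(X)\otimes\mathbb{Z}_{(n)}$ are nilpotent so units of the form $1+z$ are plentiful.

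The main obstacle I anticipate is the bookkeeping that simultaneously arranges (i) the rank of $F_r$ to be exactly of the form $nr+1$ with $r$ coprime to $n$, (ii) the virtual class $[\tilde F_r]$ to be an integer multiple of a prescribed lift of $a$, and (iii) the factor $r^{-1}$ to wash out correctly modulo $\sim_n$. Concretely, given a target $a$ one needs to find $r$ with $(n,r)=1$ and a rank-$(nr+1)$ bundle $F_r$ with $n+[\tilde F_r]r^{-1}=(n+a)(1+z)$ for some nilpotent $z$; the freedom is that $r$ can be any unit in $\mathbb{Z}_{(n)}$ represented by a positive integer and that one may stabilize $F_r$ by trivial bundles (which forces $r\mapsto r$ fixed but rank up by multiples of... no — stabilizing by $\theta^n$ keeps rank $\equiv 1 \bmod n$). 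I would handle this by first fixing any large $r_0$ coprime to $n$ with $nr_0+1$ exceeding the dimension bound needed for $\tilde K^0$ stability, writing $r_0 a = [\tilde E_0]$ for an honest bundle, and then setting $F_{r_0}=E_0\oplus\theta^{(nr_0+1-\operatorname{rank}E_0)}$ after having chosen $E_0$'s rank $\leq nr_0+1$ and $\equiv\operatorname{rank}E_0\bmod$ nothing — since adding trivials is free, one only needs $\operatorname{rank}E_0\leq nr_0+1$, achievable by taking $r_0$ large. Then $[\tilde F_{r_0}]=[\tilde E_0]=r_0 a$, hence $[\tilde F_{r_0}]r_0^{-1}=a$ on the nose, so in fact $I_n$ hits $[a]$ exactly — no use of $\sim_n$ is even needed for surjectivity, though it is needed to make $I_n$ well-defined. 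This observation (that one can realize the class exactly once $r$ is allowed to grow) is what makes the argument go through, and the rest is the routine verification that the resulting continuous field $\mathcal{O}_{F_{r_0}}\otimes\mathbb{M}_{(n)}$ is genuinely a continuous field of $\mathcal{O}_{n+1}$, which is exactly the computation $\mathcal{O}_{nr+1}\otimes\mathbb{M}_{(n)}\cong\mathcal{O}_{n+1}$ recalled before the definition of $\mathcal{O}(X)_n$.
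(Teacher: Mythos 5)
Your argument is essentially the paper's own proof: write an element of $\tilde{K}^0(X)\otimes\mathbb{Z}_{(n)}$ as an integral class divided by an integer coprime to $n$, use the rank-stability theorem for vector bundles over a finite-dimensional space to realize that integral class by an honest bundle of rank exactly $nr_0+1$, so that $I_n$ hits the class on the nose, and get the first inequality from Dadarlat's identification of locally trivial continuous fields of $\mathcal{O}_{n+1}$ with principal $\operatorname{Aut}\mathcal{O}_{n+1}$-bundles. The only slip is the phrase ``any large $r_0$ coprime to $n$'': you need $r_0a$ to be integral, so $r_0$ must be a (large) multiple of the denominator $s$ you introduced, still coprime to $n$ --- exactly the role of the product $rR$ in the paper --- and with that adjustment the proof is correct.
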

\begin{proof}
Every element of $\tilde{K}^0(X)\otimes \mathbb{Z}_{(n)}$ is of the form $\frac{1}{r}\otimes x$ where $(n, r)=1$ and $x\in\tilde{K}^0(X)$.
By \cite[Section 9, Theorem 1.2]{H}, we have $R\in \mathbb{N}$ satisfying $\tilde{K}^0(X)=\{[\tilde{E}]\in\tilde{K}^0(X)\;\mid {\rm rank}\;E=nR+1\}$. So we have a vector bundle of rank $nrR+1$, $F_{rR}$ with $Rx=[\tilde{F}_{rR}]$. Therefore we have $I_n([\tilde{F}_{rR}])=[\frac{1}{r}\otimes x]$.
By \cite[Theorem 1.4]{D2}, one has $\mathcal{O}(X)_n\subset[X, \operatorname{BAut}\mathcal{O}_{n+1}]$. This proves the lemma.
\end{proof}
Let $R$ be a commutative algebra. A filtration of $R$ is a sequence of subalgebras $$\dotsm R_{k+1}\subset R_k\subset\dotsm\subset R_1=R$$ with $R_pR_q\subset R_{p+q}$.
Let $X$ be a finite CW-complex. Then the group $\tilde{K}^0(X)$ is a finitely generated commutative group by induction argument of attaching cells.  
The algebra $\tilde{K}^0(X)$ has a filtration
$$0=K^0_m(X)\subset\dotsm\subset K^0_1(X)=\tilde{K}^0(X)$$
by \cite[Section 2.1]{AH}.
Consider a sequence of $k$-skeletons $\{pt\}=X_0\subset X_1\subset\dotsm\subset X_m=X$. Then we define $K^0_k(X)$ by $\operatorname{Ker}(K^0(X)\to K^0(X_k))$.
If the cohomology groups of a finite CW-complex $X$ have no torsion,  one has $\operatorname{Tor}(K^0_k(X)/K^0_{k+1}(X), \mathbb{Z}_n)=0$ by \cite[Section 2.3]{AH} and \cite[Section 2.4]{AH}.  
Moreover Dadarlat shows in his proof of \cite[Theorem 5.3]{D1} that if the cohomology groups of the space $X$ have no $n$-torsion, one has $\operatorname{Tor}(K_k^0(X)/K^0_{k+1}(X), \mathbb{Z}_n)=0,\; m\geq k$.

The proof of the following lemma is the same as in the proof of  \cite[Lemma 5.2]{D1}.

\begin{lem}\label{tec}
Let $R$ be a filtered commutative ring with $0=R_m\subset R_{m-1}\dotsm \subset R_1=R$ and such that $R$ is finitely generated as an additive group. If $\operatorname{Tor}(R_k/R_{k+1}, \mathbb{Z}_n)=0$ for every $k$, we have $|(R\otimes\mathbb{Z}_{(n)})/\sim_{n}|\geq |R\otimes \mathbb{Z}_n|$.
\end{lem}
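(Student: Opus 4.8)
The plan follows the strategy of \cite[Lemma 5.2]{D1}: realize $\sim_n$ as the orbit relation of a group action and induct on the length $m$ of the filtration. Since $R_pR_q\subset R_{p+q}$ and $R_m=0$, the ring $R$ is nilpotent, so every element of $R\otimes\Z_{(n)}$ is nilpotent and $G:=1+R\otimes\Z_{(n)}$ is a group under multiplication inside the unitization $(R\otimes\Z_{(n)})^{\sim}$, the inverse of $1+z$ being $\sum_{k\ge 0}(-z)^k$. This $G$ acts on $R\otimes\Z_{(n)}$ by $g\cdot a:=g(n+a)-n$, and $a\sim_n b$ precisely when $a$ and $b$ lie in the same $G$-orbit; thus $(R\otimes\Z_{(n)})/\!\sim_n$ is the orbit space. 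I would also record, by a short induction along the filtration, that $\Tor(R_k,\Z_n)=\Tor(R/R_k,\Z_n)=0$ for every $k$; since $\Z_{(n)}$ is a localization of $\Z$, tensoring then turns all of $R_k\otimes\Z_{(n)}$ and $(R/R_k)\otimes\Z_{(n)}$ into finitely generated free $\Z_{(n)}$-modules, and it yields $|R\otimes\Z_n|=\prod_k|(R_k/R_{k+1})\otimes\Z_n|$.

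The base case is $m\le 2$, i.e.\ $R^2=0$: here $g\cdot a=a+nz$ for $g=1+z$, so $\sim_n$ is translation by $n(R\otimes\Z_{(n)})$ and $(R\otimes\Z_{(n)})/\!\sim_n\cong R\otimes\Z_n$, giving equality of cardinalities. For the inductive step I set $I:=R_{m-1}$ and $\bar R:=R/I$, so that $IR\subset R_m=0$, $\Tor(I,\Z_n)=0$, and $\bar R$ carries a filtration of length $m-1$ again satisfying the hypothesis. The reduction $R\otimes\Z_{(n)}\to\bar R\otimes\Z_{(n)}$ is equivariant for $G\twoheadrightarrow\bar G:=1+\bar R\otimes\Z_{(n)}$ and hence induces a surjection $\pi\colon(R\otimes\Z_{(n)})/\!\sim_n\twoheadrightarrow(\bar R\otimes\Z_{(n)})/\!\sim_n$; the heart of the proof is to show each fibre of $\pi$ has cardinality exactly $|I\otimes\Z_n|$.

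Given a class in the target with representative $\bar a_0$ and a chosen lift $a_0\in R\otimes\Z_{(n)}$, one first notes that every class of the fibre over it has a representative in $a_0+I\otimes\Z_{(n)}$ (apply to a given representative a lift to $G$ of a $\bar G$-element carrying its image back to $\bar a_0$). Using $IR=0$, which kills the cross term $zx$ for $z\in R\otimes\Z_{(n)}$ and $x\in I\otimes\Z_{(n)}$, one checks that $a_0+x$ and $a_0+x'$ represent the same class of the fibre exactly when $x'-x\in W:=(n+a_0)(R\otimes\Z_{(n)})\cap(I\otimes\Z_{(n)})$; hence the fibre is $(I\otimes\Z_{(n)})/W$. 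The inclusion $n(I\otimes\Z_{(n)})\subset W$ is automatic. For the reverse inclusion I would show that $n+\bar a_0$ is a non-zero-divisor in $A:=\bar R^{\sim}\otimes\Z_{(n)}$: this $A$ is a finitely generated free $\Z_{(n)}$-module with nilradical $\bar R\otimes\Z_{(n)}$ and reduced quotient $\Z_{(n)}$, each prime $p\mid n$ acts injectively on the free module $A$, so no associated prime of $A$ contains such a $p$, whence the only associated prime is the nilradical and the zero-divisors of $A$ are exactly $\bar R\otimes\Z_{(n)}$; since $n+\bar a_0$ has nonzero image $n$ modulo the nilradical it is a non-zero-divisor. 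Therefore $\operatorname{Ann}_A(n+\bar a_0)=0$, which forces $W=n(I\otimes\Z_{(n)})$ and $|(I\otimes\Z_{(n)})/W|=|I\otimes\Z_n|$.

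Combining these, $|(R\otimes\Z_{(n)})/\!\sim_n|=|(\bar R\otimes\Z_{(n)})/\!\sim_n|\cdot|I\otimes\Z_n|\ge|\bar R\otimes\Z_n|\cdot|I\otimes\Z_n|=|R\otimes\Z_n|$, where the middle inequality is the inductive hypothesis for $\bar R$ and the last equality comes from the exact sequence $0\to I\otimes\Z_n\to R\otimes\Z_n\to\bar R\otimes\Z_n\to 0$ (exact because $\Tor(I,\Z_n)=0$). The main obstacle is the fibre computation, and inside it the commutative-algebra assertion that $n+\bar a_0$ is a non-zero-divisor — this is precisely the point where the hypothesis $\Tor(R_k/R_{k+1},\Z_n)=0$ is used, via the freeness of $\bar R^{\sim}\otimes\Z_{(n)}$ over $\Z_{(n)}$. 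A secondary technical point is that a chain of $\sim_n$-moves joining two elements of $a_0+I\otimes\Z_{(n)}$ while leaving that coset should create no extra identifications; this is handled by composing the chain into a single element of $G$, which must then stabilize $\bar a_0$ and hence translates the coset by an element of $W$.
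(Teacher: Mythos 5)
Your argument is correct and is essentially the paper's own route: the paper simply defers to the inductive fibre-counting proof of Dadarlat's Lemma 5.2, which is what you reconstruct (orbit description of $\sim_n$ under $1+R\otimes\Z_{(n)}$, induction on the filtration length via $\bar R=R/R_{m-1}$, and the key non-zero-divisor property of $n+\bar a_0$ coming from the freeness of $\bar R\otimes\Z_{(n)}$ over $\Z_{(n)}$, which is where $\Tor(R_k/R_{k+1},\Z_n)=0$ enters). One cosmetic slip: the injectivity of $I\otimes\Z_n\to R\otimes\Z_n$ follows from $\Tor(\bar R,\Z_n)=0$ rather than $\Tor(I,\Z_n)=0$, but you established that vanishing earlier, and the desired inequality only needs right-exactness of the tensored sequence in any case.
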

 
\begin{cor}\label{Syn}
Let $X$ be a finite CW-complex. Suppose $\operatorname{Tor}(H^*(X),\;\mathbb{Z}_n)=0$. Then we have 
$$|\tilde{K}^0(X)\otimes \mathbb{Z}_{(n)}/\sim_n|\geq |\tilde{K}^0(X)\otimes \mathbb{Z}_n|.$$
\end{cor}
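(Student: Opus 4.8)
The plan is to deduce Corollary~\ref{Syn} directly from Lemma~\ref{tec} by equipping $R=\tilde K^0(X)$ with the filtration by the subgroups $K^0_k(X)=\ker(K^0(X)\to K^0(X_k))$ already recalled above. First I would verify that this really is a filtration in the sense required by Lemma~\ref{tec}: the multiplicativity $K^0_p(X)\cdot K^0_q(X)\subset K^0_{p+q}(X)$ is the standard filtration property of the Atiyah--Hirzebruch filtration (cited via \cite[Section 2.1]{AH}), and the chain terminates at $0$ after finitely many steps because $X$ is a finite CW-complex; moreover $\tilde K^0(X)$ is finitely generated as an additive group by the induction on attached cells mentioned earlier in the excerpt. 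So $R=\tilde K^0(X)$ satisfies all the hypotheses of Lemma~\ref{tec} except possibly the torsion condition on the subquotients.

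Next I would check the torsion hypothesis $\operatorname{Tor}(K^0_k(X)/K^0_{k+1}(X),\Z_n)=0$ for all $k$. This is exactly what is recorded in the paragraph preceding the corollary: when the cohomology of $X$ has no $n$-torsion, Dadarlat's argument in the proof of \cite[Theorem 5.3]{D1} (building on \cite[Section 2.3, 2.4]{AH}, which identify the subquotients $K^0_k(X)/K^0_{k+1}(X)$ with subquotients of $H^k(X)$) gives precisely $\operatorname{Tor}(K^0_k(X)/K^0_{k+1}(X),\Z_n)=0$. Thus the hypothesis $\operatorname{Tor}(H^*(X),\Z_n)=0$ of the corollary feeds directly into the hypothesis of Lemma~\ref{tec}.

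Having matched up the hypotheses, the conclusion $|(R\otimes\Z_{(n)})/\!\sim_n|\geq|R\otimes\Z_n|$ of Lemma~\ref{tec} reads, with $R=\tilde K^0(X)$, as $|\tilde K^0(X)\otimes\Z_{(n)}/\!\sim_n|\geq|\tilde K^0(X)\otimes\Z_n|$, which is the assertion of Corollary~\ref{Syn}. So the proof is essentially a one-line invocation once the dictionary is set up.

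The main obstacle, such as it is, is not in the corollary itself but lies one step back: one must be confident that the equivalence relation $\sim_n$ used in Lemma~\ref{tec} (defined for filtered rings $R$) coincides with the relation $\sim_n$ defined earlier on $\tilde K^0(X)\otimes\Z_{(n)}$ via $(n+a)(1+z)=(n+b)$, and that nilpotence of all elements of $\tilde K^0(X)\otimes\Z_{(n)}$ (used to invert $1+z$) is available --- both of which are guaranteed by \cite[Chap.~II, Theorem 5.9]{K} as already noted. Since Lemma~\ref{tec} is stated with the proof deferred to \cite[Lemma 5.2]{D1}, the only real content here is bookkeeping: confirming that $\tilde K^0(X)$ with the skeletal filtration is an instance of the abstract setup, and that the no-$n$-torsion hypothesis on $H^*(X)$ is exactly the condition ensuring the filtration subquotients are $\operatorname{Tor}$-free against $\Z_n$.
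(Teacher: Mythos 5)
Your proposal is correct and matches the paper's (essentially implicit) argument: the corollary is obtained exactly by applying Lemma \ref{tec} to $R=\tilde K^0(X)$ with the skeletal Atiyah--Hirzebruch filtration $K^0_k(X)$, whose subquotients have vanishing $\operatorname{Tor}(\cdot,\Z_n)$ under the no-$n$-torsion hypothesis, as recorded in the paragraph preceding the corollary. Nothing further is needed.
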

We need the following proposition.
\begin{prop}[{\cite[Proposition 5.1]{D1}}]\label{tower}
Let $X$ be a finite CW-complex. Then we have
$$|\tilde{H}^{even}(X, \mathbb{Z}_n)|\geq|[X, \operatorname{BAut}\mathcal{O}_{n+1}]|,$$
where $\tilde{H}^{even}(X, \mathbb{Z}_n) \colon=\prod_{k\geq 1}H^{2k}(X, \mathbb{Z}_n)$.
\end{prop}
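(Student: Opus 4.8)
The plan is to study $Y:=\operatorname{BAut}\mathcal{O}_{n+1}$ through its Postnikov tower and to estimate $|[X,Y]|$ one Postnikov stage at a time. First I would pin down the homotopy groups of $Y$. Since $\mathcal{O}_{n+1}$ is unital, every automorphism fixes the generator $[1]_0$ of $K_0(\mathcal{O}_{n+1})=\mathbb{Z}_n$ and is therefore trivial on $K$-theory, hence homotopic to $\operatorname{id}$ by the Kirchberg-Phillips uniqueness theorem; thus $\operatorname{Aut}\mathcal{O}_{n+1}$ is connected (cf. \cite{D2}) and $Y$ is simply connected. For $m\geq 2$, Dadarlat's natural bijection $[Z,\operatorname{Aut}\mathcal{O}_{n+1}]\cong K^1(Z;\mathbb{Z}_n)$, which is a group isomorphism when $Z$ is a co-H-space, applied to $Z=S^{m-1}$ gives
$$\pi_m(Y)=\pi_{m-1}(\operatorname{Aut}\mathcal{O}_{n+1})\cong K^1(S^{m-1};\mathbb{Z}_n)=\tilde{K}^{2-m}(M_n),$$
which equals $\mathbb{Z}_n$ for $m$ even and $0$ for $m$ odd by Bott periodicity together with Lemma \ref{wer}.

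Next I would set up the Postnikov tower $Y\to\cdots\to Y_{2k}\to Y_{2k-2}\to\cdots\to Y_2=K(\mathbb{Z}_n,2)$. Because the odd homotopy groups vanish, only the even stages are nontrivial, and each $Y_{2k}\to Y_{2k-2}$ is a principal $K(\mathbb{Z}_n,2k)$-fibration, i.e. the homotopy pullback of the path-loop fibration $PK(\mathbb{Z}_n,2k+1)\to K(\mathbb{Z}_n,2k+1)$ along the $k$-invariant $\kappa_{2k}\in H^{2k+1}(Y_{2k-2};\mathbb{Z}_n)$. Passing to connected components, we may assume $X$ is connected; writing $d=\dim X$, the fiber of $Y\to Y_m$ is $m$-connected, so standard obstruction theory shows $[X,Y]\to[X,Y_m]$ is bijective once $m\geq\dim X$, and hence it suffices to bound $|[X,Y_{2k}]|$.

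The induction then runs as follows. For the base case, $[X,Y_2]=[X,K(\mathbb{Z}_n,2)]=H^2(X;\mathbb{Z}_n)$. For the inductive step, applying $\operatorname{Map}(X,-)$ to the principal fibration $K(\mathbb{Z}_n,2k)\to Y_{2k}\to Y_{2k-2}$ yields a fibration of mapping spaces; its homotopy exact sequence exhibits a group action of $\pi_0\operatorname{Map}(X,K(\mathbb{Z}_n,2k))=H^{2k}(X;\mathbb{Z}_n)$ on $[X,Y_{2k}]$ whose orbits are precisely the nonempty fibers of $[X,Y_{2k}]\to[X,Y_{2k-2}]$. Hence $|[X,Y_{2k}]|\leq|H^{2k}(X;\mathbb{Z}_n)|\cdot|[X,Y_{2k-2}]|$, and multiplying these inequalities up the tower (which is finite, since $H^{2k}(X;\mathbb{Z}_n)=0$ for $2k>d$) gives
$$|[X,\operatorname{BAut}\mathcal{O}_{n+1}]|=|[X,Y_m]|\leq\prod_{k\geq 1}|H^{2k}(X;\mathbb{Z}_n)|=|\tilde{H}^{even}(X,\mathbb{Z}_n)|.$$

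The main obstacle is the inductive step, and more precisely the need to control the fibers of $[X,Y_{2k}]\to[X,Y_{2k-2}]$ over \emph{every} point and not merely over the basepoint: a fibration of mapping spaces only yields an exact sequence of pointed sets in general, so one must genuinely exploit the principal-fibration structure of the Postnikov stage (equivalently, that $K(\mathbb{Z}_n,2k)=\Omega K(\mathbb{Z}_n,2k+1)$ acts fiberwise on $Y_{2k}$) to conclude that the $H^{2k}(X;\mathbb{Z}_n)$-action is transitive on each nonempty fiber. A subsidiary point requiring care is the computation of $\pi_*(\operatorname{Aut}\mathcal{O}_{n+1})$ in all degrees, together with the connectedness statement, for which I would rely on \cite{D2} and \cite{ST}.
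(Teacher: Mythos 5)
Your argument is correct, and it is essentially the proof of the result being quoted: the paper itself offers no proof of Proposition \ref{tower}, citing Dadarlat's Proposition 5.1 in \cite{D1}, whose argument is exactly this one — compute $\pi_{2k}(\operatorname{BAut}\mathcal{O}_{n+1})\cong\mathbb{Z}_n$ and the vanishing of odd homotopy groups, run the Postnikov tower with principal $K(\mathbb{Z}_n,2k)$-fibrations, and bound the fibers of $[X,Y_{2k}]\to[X,Y_{2k-2}]$ by the transitive action of $H^{2k}(X;\mathbb{Z}_n)$ on each nonempty fiber. You also correctly isolate the one genuinely delicate point (principality of the Postnikov stages, rather than just the pointed exact sequence of a fibration), so nothing is missing.
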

Now we show the following theorem.
\begin{thm}  
Let $X$ be a finite CW-complex. Suppose $\operatorname{Tor}(H^*(X),\;\mathbb{Z}_n)=0$. Then the map $I_n : \mathcal{O}(X)_n\to\tilde{K}^0(X)\otimes\mathbb{Z}_{(n)}/\sim_n$ is bijective, and we have
$$|[X, \operatorname{BAut}\mathcal{O}_{n+1}]|=|\mathcal{O}(X)_n|=|\tilde{H}^{even}(X, \mathbb{Z}_n)|.$$
\end{thm}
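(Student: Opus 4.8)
The plan is to assemble the chain of inequalities established in the preceding lemmas into a string of equalities, and then to deduce bijectivity of $I_n$ by a counting argument. Before doing so I would record a finiteness remark: since $X$ is a finite CW-complex, each group $H^{2k}(X,\Z_n)$ is finite and only finitely many of them are nonzero, so $\tilde{H}^{even}(X,\Z_n)$ is finite; by Theorem \ref{DG}(1), which uses the hypothesis $\operatorname{Tor}(H^*(X),\Z_n)=0$, the group $\tilde{K}^0(X)\otimes\Z_n$ is then finite of the same order. Consequently every set appearing below, being sandwiched between these two, is finite as well.

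Next I would concatenate the known estimates. By Lemma \ref{big} we have $|[X,\operatorname{BAut}\mathcal{O}_{n+1}]|\geq|\mathcal{O}(X)_n|\geq|\tilde{K}^0(X)\otimes\mathbb{Z}_{(n)}/\sim_n|$; by Corollary \ref{Syn}, $|\tilde{K}^0(X)\otimes\mathbb{Z}_{(n)}/\sim_n|\geq|\tilde{K}^0(X)\otimes\mathbb{Z}_n|$; by Theorem \ref{DG}(1), $|\tilde{K}^0(X)\otimes\mathbb{Z}_n|=|\tilde{H}^{even}(X,\Z_n)|$; and by Proposition \ref{tower}, $|\tilde{H}^{even}(X,\Z_n)|\geq|[X,\operatorname{BAut}\mathcal{O}_{n+1}]|$. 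The chain therefore closes up on itself, forcing every inequality to be an equality. In particular $|[X,\operatorname{BAut}\mathcal{O}_{n+1}]|=|\mathcal{O}(X)_n|=|\tilde{H}^{even}(X,\Z_n)|$, which is the displayed formula, and moreover $|\mathcal{O}(X)_n|=|\tilde{K}^0(X)\otimes\mathbb{Z}_{(n)}/\sim_n|$. For the bijectivity claim, Lemma \ref{big} already gives that $I_n\colon\mathcal{O}(X)_n\to\tilde{K}^0(X)\otimes\mathbb{Z}_{(n)}/\sim_n$ is surjective, and we have just seen that its domain and codomain are finite sets of the same cardinality; hence $I_n$ is a bijection.

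I do not expect a genuine obstacle, since all of the substantive analytic and algebraic content has been isolated in Lemmas \ref{inv}, \ref{equiv}, \ref{big}, \ref{tec} and in the cited results of Dadarlat. The one point demanding a little care is precisely the finiteness bookkeeping mentioned above: the sets $\mathcal{O}(X)_n$ and $\tilde{K}^0(X)\otimes\mathbb{Z}_{(n)}/\sim_n$ are a priori only known to lie between the two outer terms of the chain, so one must invoke the finiteness of those outer terms before concluding that the surjection $I_n$ between sets of equal cardinality is actually bijective.
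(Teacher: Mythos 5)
Your proof is correct and follows essentially the same route as the paper: it closes the cycle of inequalities from Lemma \ref{big}, Corollary \ref{Syn}, Theorem \ref{DG}(1) and Proposition \ref{tower} to force equalities throughout, and then deduces bijectivity of $I_n$ from surjectivity plus equal finite cardinalities. Your explicit finiteness bookkeeping and the surjection-between-equal-finite-sets argument for $I_n$ are just a more careful spelling-out of what the paper leaves implicit.
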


\begin{proof}
By Corollary \ref{Syn}, we have $|\tilde{K}^0(X)\otimes \mathbb{Z}_{(n)}/\sim_n|\geq |\tilde{K}^0(X)\otimes \mathbb{Z}_n|$. By Lemma \ref{big}, we have $|[X, \operatorname{BAut}\mathcal{O}_{n+1}]|\geq |\tilde{K}^0(X)\otimes \mathbb{Z}_{(n)}/\sim_n|$. From  Proposition \ref{tower}, we have $|\tilde{H}^{even}(X, \mathbb{Z}_n)|\geq|[X, \operatorname{BAut}\mathcal{O}_{n+1}]|,$ and  Theorem \ref{DG} yields 
$$|[X, \operatorname{BAut}\mathcal{O}_{n+1}]|=|\mathcal{O}(X)_n|=|\tilde{H}^{even}(X, \mathbb{Z}_n)|.$$

\end{proof}


\begin{thebibliography}{99}
\bibitem{A1}S. Araki and H. Toda, Multiplicative structures in mod q cohomology theories. I, Osaka J. Math. $\bf{2}$ (1965), 71--115.
\bibitem{A2}S. Araki and H. Toda, Multiplicative structures in mod q cohomology theories. II, Osaka J. Math. $\bf{3}$ (1966), 81--120.
%\bibitem{Arv}W. Arveson, Notes on extensions of C*-algebras, Duke. Math. Journal, vol. $\bf{44}$, no. 2, (1977), 329--355. 
\bibitem{AH}M. F. Atiyah and F. Hirzebruch, Vector bundles and homogeneous spaces, Proc. Sympos. Pure Math. $\bf{3}$ (1961), 7--38.
\bibitem{Bl}B. Blackadar, K-theory for operator algebras, 2nd ed., Math. Sci. Inst. Publ., vol. $\bf 5$, Cambridge University Press, Cambridge, 1998.
\bibitem{BO}N. P. Brown and N. Ozawa, C*-algebras and finite dimensional approximations, vol 88. Amer. Math.  Soc., 2008.
%\bibitem{B}L. G. Brown and G. K. Pedersen, Non-stable K-theory and extremally rich C*-algebras, J. Functional Analysis $\bf{267}$ (2014), 262--298.
\bibitem{C2}J. Cuntz, K-theory for cerain C*-algebras, Ann. of Math. (2) $\bf{113}$ : 1 (1981), 181--197. 
\bibitem{C}J. Cuntz, On the homotopy groups of the space of endomorphisms of a C*-algebra (with applications to topological Markov chains), Operator algebras and group representations, vol. I (Neptun, 1980), 124--137, Monogr. Stud. Math. $\bf{17}$, Pitman, Boston, MA, 1984.
\bibitem{AT}J. F. Davis and P. Kirk, Lecture notes in algebraic topology, Graduate Studies in Mathematics, $\bf{35}$. Amer. Math. Soc, Providence, $\bf{RI}$, 2001.
\bibitem{D0}M. Dadarlat, The homotopy groups of the automorphism groups of Kirchberg algebras, J. Noncommut. Geom. $\bf{1}$ (2007), no. 1, 113--189.
\bibitem{Dix}J. Dixmier, Les C*-algebres et leurs representations, 2nd ed., Cahiers Scientifiques $\bf{29}$, Gauthier-Villars, Paris, 1969. Reprinted by Editions Jacques Gabay, Paris, 1996. Translated as C*-algebras, North-Holland, Amsterdam, 1977.
%\bibitem{DD}J. Dixmier and A. Douady, Champs continus d'espaces hilbertiens et de C*-algebres, Bull. Soc. Math. France $\bf{91}$ (1963),227--284.
\bibitem{DP}M. Dadarlat and U. Pennig, A Dixmier-Douady theory for strongly self-absorbing C*-algebras, J. Reine Angew. Math. $\bf{718}$ (2016), 153--181.
\bibitem{D1}M. Dadarlat, The C*-algebra of a vector bundle, J. Reine Angew. Math. $\bf{670}$ (2012), 121--143.
\bibitem{DW}M. Dadarlat and W. Winter, On the KK-theory of strongly self-absorbing C*-algebras, Math. Scand. $\bf{104}$ (2009), no. 1, 95--107.
\bibitem{D2}M. Dadarlat, Continuous fields of C*-algebras over finite dimensional spaces, Adv. Math. 222 (2009), no. $\bf{5}$, 1850--1881.
%\bibitem{Hig}N. Higson and J. Cuntz, Kuiper's theorem for Hilbert modules, Contemporary Mathematics $\bf{62}$, 1987.
\bibitem{H}D. Husemoller, Fibre bundles third edition., Grad. Texts Math. $\bf{20}$, Springer-Verlag, New York 1994.
%\bibitem{Hat}A. Hatcher, Spectral sequences, preprint. 
%\bibitem{Iz}M. Izumi, private note.
\bibitem{K}M. Karoubi, K-theory, Grundl. Math. Wiss. $\bf{226}$, Springer-Verlag, Berlin 1978.
\bibitem{KT}T. Katsura, On C*-algebras associated with C*-correspondences, Journal of Functional Analysis. $\bf{217}$ (2004), 366--401.
%\bibitem{L}H. Lin, On the classification of C*-algebras of real rank zero with zero $K_1$, J. Operator theory $\bf{35}$ (1996), 147--178.
%\bibitem{TL}M. Mimura and H. Toda, Topology of Lie groups I and II, Translations of Mathematical Monographs, vol $\bf{91}$. Amer. Math. Soc, 1978.
%\bibitem{M}J. A. Mingo, K-theory and multipliers of stable C*-algebras, Trans. Amer. Math. Soc. $\bf{299}$ : 1 (1987), 397--411.
%\bibitem{N}V. Nistor, On the homotopy groups of the automorphism groups of AF-C*-algebras, J. Operator theory $\bf{19}$ (1988), 319--340.
%\bibitem{Pas}W. L. Paschke, K-theory for commutants in the Calkin algebra, Pacific J. Math. $\bf{95}$ : 2 (1981), 427--434.
%\bibitem{P}W. L. Paschke and Salinas, Matrix algebras over $\mathcal{O}_{n+1}$, Michigan Math. J. $\bf{26}$ : 1 (1979), 3--12.
\bibitem{Phill}N. C. Phillips, A classification theorem for nuclear purely infinite simple C*-algebras, Doc. Math. $\bf{5}$ (2000), 49--114.
\bibitem{Pim}M. Pimsner, A class of C*-algebras generalizing both Cuntz-Krieger algebras and crossed products by $\mathbb{Z}$, Fields Inst. Commun. $\bf{12}$ (1997), 189--212.
%\bibitem{PPV}M. Pimsner, S. Popa and D. Voiculescu, Homogenious C*-extensions of $C(X)\otimes \mathbb{K}(H)$. Part I, J.Operator theory $\bf{1}$ (1979), 55--108.
\bibitem{R}M. R$\o$rdam, F. Larsen and N. J. Laustsen, An introduction to K-theory for C*-algebras, London Mathematical Society Student Texts, vol. 49, Cambridge University Press, Cambridge, 2000.
\bibitem{RW}I. Raeburn and Dana P. Williams, Morita wquivalence and continuous-trace C*-algebras, Mathematical Surveys and Monographs, $\bf{60}$. American Mathematical Society, Providence, $\bf{RI}$, 1998.
\bibitem{RS}J. Rosenberg and C. Schochet, The K$\rm \ddot{u}$nneth theorem and the universal coefficient theorem for Kasparov's generalized K-functor, Duke. Mathematical. Journal. vol. $\bf{55}$, no. 2, (1987), 431--474. 
\bibitem{S}C. Schochet, Topological methods for C*-algebras , IV : Mod p homology, Pacific J. Math. $\bf{114}$ : 2 (1984), 447--469.
\bibitem{ST}T. Sogabe, The homotopy groups of the automorphism groups of Cuntz-Toeplitz algebras, preprint,  arXiv:1903.02796.
%\bibitem{T}K. Thomsen, The homotopy type of the group of automorphisms of a UHF-algebra, J. Functional Analysis $\bf{72}$ (1987), 182--207.
\bibitem{TW}A. S. Toms and W. Winter, Strongly self-absorbing C*-algebras, Trans. Amer. Math. Soc. $\bf{359}$ (2007), no. 8, 3999--4029.
%\bibitem{V}A. Valette, A remark on the Kasparov groups $\operatorname{Ext}(A, B)$, Pacific J. Math. $\bf{109}$ : 1 (1983), 247--255.
\bibitem{W}W. Winter, Strongly self-absorbing C*-algebras are  $\mathcal{Z}$-stable, J. Noncommut. Geom. $\bf{5}$ (2011), no. 2, 253--264.
\end{thebibliography}
\end{document}